\newcommand{\R}{\mathbb{R}}
\DeclareMathOperator{\pr}{pr}
\newcommand{\mx}{\mathfrak{X}}
\newcommand{\dr}{\mathbf{d}}
\newcommand{\ldr}[1]{{{\pounds}}_{#1}}
\newcommand{\ip}[1]{{\mathbf{i}}_{#1}}
\newcommand{\an}[1]{\arrowvert_{#1}}
\DeclareMathOperator{\Id}{Id}
\DeclareMathOperator{\Hom}{Hom}
\newcommand{\lb}{\llbracket} \newcommand{\rb}{\rrbracket}
\newtheorem{theorem}{Theorem}[section]
\newtheorem{lemma}[theorem]{Lemma}
\newtheorem{proposition}[theorem]{Proposition}
\newtheorem{corollary}[theorem]{Corollary}
\newtheorem{definition}[theorem]{Definition} \theoremstyle{definition}
\newtheorem{example}[theorem]{Example}
\newtheorem{remark}[theorem]{Remark}
\newtheorem*{theorem*}{Theorem} 
\newcommand\reallywidehat[1]{%
\savestack{\tmpbox}{\stretchto{%
  \scaleto{%
    \scalerel*[\widthof{\ensuremath{#1}}]{\kern-.6pt\bigwedge\kern-.6pt}%
    {\rule[-\textheight/2]{1ex}{\textheight}}
  }{\textheight}%
}{0.6ex}}%
\stackon[1pt]{#1}{\tmpbox}%
}
\newcommand\reallywidetilde[1]{%
\savestack{\tmpbox}{\stretchto{%
  \scaleto{%
    \scalerel*[\widthof{\ensuremath{#1}}]{\kern-.6pt\sim\kern-.6pt}%
    {\rule[-\textheight/2]{1ex}{\textheight}}
  }{\textheight}%
}{0.9ex}}%
\stackon[1pt]{#1}{\tmpbox}%
}
\NewDocumentCommand\party{mmO{}}{\frac{\partial^{#3} #1}{\partial^{#3} #2}}
\title{Natural lifts of Dorfman brackets}
\author{M. Jotz Lean}
\address{Mathematisches Institut, Georg-August Universit\"at G\"ottingen.} 
\email{madeleine.jotz-lean@mathematik.uni-goettingen.de}
\author{C. Kirchhoff-Lukat}
\address{Departement Wiskunde, KU Leuven.}
\email{charlotte.kirchhofflukat@kuleuven.be}
\begin{document}
\maketitle

\begin{abstract}
  In this note we prove that, for a vector bundle $E$ over a manifold
  $M$, a Dorfman bracket on $TM\oplus E^*$ anchored by $\pr_{TM}$ and
  with $E$ a vector bundle over $M$, is equivalent to a lift from
  $\Gamma(TM\oplus E^*)$ to linear sections of $TE\oplus T^*E\to E$,
  that intertwines the given Dorfman bracket with the Courant-Dorfman
  bracket on sections of $TE\oplus T^*E$.

  This shows a universality of the Courant-Dorfman bracket,
  and allows us to caracterise twistings and symmetries of transitive
  Dorfman brackets via the corresponding lifts.

  \emph{This version of the manuscript was accepted for publication in
  ``Advances in Theoretical and Mathematical Physics'' in October 2018.}
\end{abstract}

\tableofcontents 

\section{Introduction}
Theodore Courant and his adviser Alan Weinstein defined\footnote{See
  \cite{Kosmann-Schwarzbach13} for a nice exposition of the history of
  Courant algebroids.}  the Courant bracket in 1990
\cite{CoWe88,Courant90a}: an $\R$-bilinear, skew-symmetric bracket on
sections of $TM\oplus T^*M$ that satisfies the Jacobi identity up to
an exact one-form.  Irene Dorfman independently introduced that
structure in her definition and study of Dirac structures in the
context of infinite dimensional Hamiltonian structures
\cite{Dorfman87}.  Then Liu, Weinstein and Xu discovered in the late
nineties that this bracket on sections of $TM\oplus T^*M$ is in fact a
particular, ``standard'' example of a Courant algebroid, when they
defined the later notion and proved that the bicrossproduct of any Lie
bialgebroid can be understood as a special type of Courant algebroid
\cite{LiWeXu97}.

 Nowadays, for a smooth manifold $M$, the standard
Courant algebroid structure on $TM\oplus T^*M$ is often defined using
the Courant-Dorfman bracket on $TM\oplus T^*M$: an $\R$-bilinear
bracket on sections of $TM \oplus T^*M$, that is not skew-symmetric
but satisfies a Jacobi identity written in Leibniz form (see
\cite{Roytenberg99,Uchino02}).  The two brackets are equivalent in the
sense that the Courant bracket is the skew-symmetrisation of the
Courant-Dorfman bracket.

In the context of Courant algebroids and Dirac structures, the
Courant-Dorfman bracket plays an important role in the generalised
geometry developped first by Nigel Hitchin, Marco Gualtieri (see
\cite{Hitchin03,Gualtieri03}). It also enters the theoretical
physics literature in this context: $TM\oplus T^*M$-generalised
geometry turns out to provide a convenient description for the
low-energy effective theory of closed string theory referred to as
\emph{double field theory} (see for instance \cite{HuZw09, HoHuZw10}).

Subsequently, the low-energy effective theories of the conjectured
M-theory were linked to Dorfman brackets on vector bundles of the form
$TM \oplus \wedge^{k_1} T^*M \oplus \dots \oplus \wedge^{k_l} T^*M$
(see \cite{Hull07}).

In all of these applications, Dorfman brackets encode infinitesimal
gauge transformations of the physical theory. Gauge transformations or
gauge invariances are redundancies in the mathematical description of
the theory (not to be confused with physical symmetries) -- the
physical results are invariant under the application of such
transformations. For example, general relativity, a theory of
four-dimensional smooth manifolds with Lorentzian metrics, is
invariant under diffeomorphisms. The Lie algebra of the diffeomorphism
group on a smooth manifold is given by the Lie derivatives $\ldr{X}$ for $X \in
\mx(M)$, so the Lie bracket (the simplest example of a Dorfman
bracket) gives the infinitesimal gauge transformations of general
relativity.  

Similarly, the theory described by the $TM\oplus T^*M$-generalised
geometry, which is a theory of a metric and a 2-form on a smooth
manifold $M$, is invariant under the semi-direct product of the
diffeomorphism group with the (additive) group of closed two-forms
$\operatorname{Diff}(M) \ltimes \Omega^2_{cl}(M)$ -- the physics of
this theory only depends on the exterior derivative of the two-form.
The Lie algebra of this group of \emph{generalised diffeomorphisms} is
precisely given by elements $\lb (X,\xi), \cdot \rb, (X,\xi) \in
\Gamma(TM \oplus T^*M)$, so the Courant-Dorfman bracket encodes the
infinitesimal gauge transformations of this more extended theory.
This principle is repeated in the M-theory examples.

Dorfman-type brackets on $TM \oplus \wedge^{k_1} T^*M \oplus \dots
\oplus \wedge^{k_l} T^*M$ and generalisations are studied in great
detail in \cite{Baraglia12} under the name of \emph{closed-form
  algebroids} as a special case of the general concept of
\emph{Leibniz algebroid}.  Leibniz algebroids are the natural
generalisation of Lie algebroids, where the bracket is no longer
required to be antisymmetric, but still satisfies a form of Jacobi
identity.

This paper studies Leibniz algebroids on vector bundles of the form
$TM \oplus E^*$, where $E\rightarrow M$ is some smooth vector bundle,
in the context of double vector bundles; more specifically the
standard VB-Courant algebroid $TE \oplus T^*E$ over the vector bundle $E$.  We call Leibniz
brackets of this type \emph{Dorfman brackets}, since they constitute
the most direct generalisation of the original Courant-Dorfman bracket
on $TM\oplus T^*M$.

Section \ref{lin_sec} characterises linear sections of $TE\oplus T^*E$
in terms of certain derivations of its core $E \oplus T^*M$. Linear
sections of $TE\oplus T^*E$ form a locally free sheaf over $M$ and are
thus sections of a vector bundle $\widehat{E}\rightarrow M$, the
so-called fat vector bundle. $\widehat{E}$ is in fact isomorphic as a
vector bundle to the Omni-Lie algebroid $\operatorname{Der}(E^*)\oplus
J^1(E^*)$ studied in \cite{ChLi10,ChZhSh11}. Using our results on
linear sections, we can show that the $E^*$-valued Courant algebroid
structure on $\operatorname{Der}(E^*)\oplus J^1(E^*)$ is induced from
the standard Courant algebroid structure on $TE\oplus T^*E$. Note that
according to \cite{Jotz19b}, the VB-Courant algebroid $TE\oplus
T^*E\to E$ is equivalent to an $E^*$-Courant algebroid. In
\cite{ChLiSh10} the omni-Lie algebroid associated to $E^*$ is proved
to be an $E^*$-Courant algebroid. To our knowledge, those two $E^*$-
Courant algebroids have never be proved to coincide before.

Furthermore, these results are used in Section
\ref{sec:Dorfmanlifts} to establish the following main result (Theorem
\ref{super}), which shows that all Dorfman brackets on $TM\oplus E^*$
are intimately linked to the Courant-Dorfman bracket on
$TE\oplus T^*E$. Therefore, the Courant-Dorfman bracket can be seen as
universal in the family of the Dorfman brackets.
\begin{theorem*} 
  Let $\lb \cdot, \cdot \rb$ be any Dorfman bracket on $TM \oplus E^*$
  anchored by $\pr_{TM}$.  Then there exists an $\R$-linear map
  $\Xi\colon\Gamma(TM \oplus E^*) \rightarrow \Gamma_E^l(TE \oplus T^*E)$
  which satisfies
\begin{enumerate} 
\item If $\Phi_E\colon TE \oplus T^*E \rightarrow TM \oplus E^*$ is the
  projection in the double vector bundle $(TE\oplus T^*E; TM \oplus
  E^*, E; M)$ (see Section \ref{subsec:VBCA}),
\[ \Phi_E(\Xi(\nu)(e_m))=\nu(m) \] 
for all $\nu \in \Gamma(TM\oplus E^*)$, $e_m \in E_m$ and $m\in M$.
\item The lift is \emph{natural} in the sense that for all $\nu_1,\nu_2 \in
  \Gamma(TM\oplus E^*)$, we have:
  \[ \Xi\lb \nu_1,\nu_2 \rb = \lb \Xi(\nu_1),\Xi(\nu_2)\rb \] where
  the bracket on the right-hand side is the Courant-Dorfman bracket on
  sections of $TE\oplus T^*E\to E$.
\end{enumerate} 
\end{theorem*}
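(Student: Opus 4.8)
The plan is to build $\Xi$ directly out of the operators $\lb\nu,\cdot\rb$ and then to read off both conclusions from the explicit description of linear sections obtained in Section \ref{lin_sec}. Recall from that section that a linear section of $TE\oplus T^*E\to E$ is encoded by a pair consisting of its base section in $\Gamma(TM\oplus E^*)$ together with a derivation of the core $E\oplus T^*M$ whose symbol is the $TM$-component of that base section. First I would show that a Dorfman bracket anchored by $\pr_{TM}$ supplies precisely such data: for $\nu\in\Gamma(TM\oplus E^*)$ the map $\mathcal D_\nu:=\lb\nu,\cdot\rb$ is $\R$-linear and, because the anchor is $\pr_{TM}$, satisfies the Leibniz rule $\mathcal D_\nu(f\mu)=f\,\mathcal D_\nu(\mu)+(\pr_{TM}\nu)(f)\,\mu$; hence $\mathcal D_\nu$ is a derivation of $TM\oplus E^*$ over $\pr_{TM}\nu$. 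Dualising the $E^*$-component $\eta\mapsto\pr_{E^*}\lb\nu,(0,\eta)\rb$ to an action on $\Gamma(E)$, and combining it with the Lie derivative along $\pr_{TM}\nu$ together with the de Rham differential of the $E^*$-slot on the $T^*M$-part of the core, I would assemble a derivation $D_\nu$ of $E\oplus T^*M$ with symbol $\pr_{TM}\nu$, and define $\Xi(\nu)$ to be the unique linear section with base section $\nu$ and core-derivation $D_\nu$.

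With $\Xi$ so defined, $\R$-linearity in $\nu$ is immediate from the bilinearity of the bracket, and conclusion (1) holds by construction, since the base section of $\Xi(\nu)$ is $\nu$ and $\Phi_E$ is exactly the projection of a linear section onto its base section (see Section \ref{subsec:VBCA}). The point requiring care here is well-definedness: one must check that the pair $(\nu,D_\nu)$ genuinely satisfies the compatibility conditions identified in Section \ref{lin_sec} which characterise \emph{linear} sections among all such pairs. This is where the specific shape of a Dorfman bracket is used, and it amounts to a direct verification resting on the Leibniz rule above and on the fact that the $T^*M$-part of the core is governed by $\dr$.

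The substance of the theorem is conclusion (2), and I would approach it by transporting the Courant-Dorfman bracket of two linear sections through the correspondence of Section \ref{lin_sec}. On base sections the Courant-Dorfman bracket of $\Xi(\nu_1)$ and $\Xi(\nu_2)$ must project, via $\Phi_E$, to a section of $TM\oplus E^*$; matching this with $\lb\nu_1,\nu_2\rb$ uses only anchor-compatibility and the definition of $\mathcal D_\nu$. The remaining, and genuinely delicate, task is to show that the core-derivation of $\lb\Xi(\nu_1),\Xi(\nu_2)\rb$ coincides with $D_{\lb\nu_1,\nu_2\rb}$. The essential input is the Jacobi identity of the Dorfman bracket written in Leibniz form, which is exactly the statement that $\nu\mapsto\mathcal D_\nu$ carries $\lb\cdot,\cdot\rb$ to the commutator bracket, that is $[\mathcal D_{\nu_1},\mathcal D_{\nu_2}]=\mathcal D_{\lb\nu_1,\nu_2\rb}$.

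I expect the main obstacle to lie precisely in this last step: the Courant-Dorfman bracket of linear sections is not literally the commutator of the associated derivations, so one must unwind the Section \ref{lin_sec} dictionary carefully and track the contributions of the core summand $E\oplus T^*M$ — in particular the interaction between the $\dr$-governed $T^*M$-slot and the dualised $E^*$-action — to confirm that all the lower-order terms assemble into $D_{\lb\nu_1,\nu_2\rb}$ rather than into spurious tensorial corrections. Once the translation between linear sections and core-derivations is set up precisely, the Jacobi identity of the Dorfman bracket should do the rest.
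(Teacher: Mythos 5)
Your proposal is correct and follows essentially the same route as the paper: you build $\Xi(\nu)$ as the linear section whose base is $\nu$ and whose core-derivation is the dual $\mathcal D_\nu$ of $\lb\nu,\cdot\rb$ (the paper's $\chi_{\varepsilon,\mathcal D_{(X,\varepsilon)}}$ from Proposition \ref{rough_lift}), and you reduce naturality to the computation of the Courant-Dorfman bracket of two linear sections in terms of the commutator $[\mathcal D_{\nu_1},\mathcal D_{\nu_2}]$ (the paper's Theorem \ref{prop_2_chi}) combined with the dualised Jacobi identity $[\mathcal D_{\nu_1},\mathcal D_{\nu_2}]=\mathcal D_{\lb\nu_1,\nu_2\rb}$ (equation \eqref{eq_jac_dual} and Lemma \ref{jacobi_eq}). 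The step you flag as delicate is exactly the content of the paper's Theorem \ref{prop_2_chi}, proved in Appendix \ref{proofs}.
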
 

We compare this to results obtained in \cite{Jotz18a,ChLi10,ChZhSh11}. 

Section \ref{sec:examples} explores the most important examples of
such natural lifts, and sections \ref{sec:twists} and \ref{sec:symm}
describe twistings and internal symmetries of Dorfman brackets in
light of the double vector bundle context.

\subsection*{Notation and conventions}
We write $p_M\colon TM\to M$, $q_E\colon E\to M$ for vector bundle
projections.  We write $\langle\cdot\,,\cdot\rangle$ for the canonical
pairing of a vector bundle with its dual; i.e.~$\langle
e_m,\varepsilon_m\rangle=\varepsilon_m(e_m)$ for $e_m\in E$ and
$\varepsilon_m\in E^*$. We use several different pairings; in general,
which pairing is used is clear from its arguments.  Given a section
$\varepsilon$ of $E^*$, we write $\ell_\varepsilon\colon E\to \R$ for
the linear function associated to it, i.e.~the function defined by
$e_m\mapsto \langle \varepsilon(m), e_m\rangle$ for all $e_m\in E$.
We denote by $\iota_E\colon E\to E\oplus T^*M$ the canonical inclusion. 

Let $M$ be a smooth manifold. We denote by $\mx(M)$ and
$\Omega^1(M)$ the sheaves of smooth sections of the tangent and
the cotangent bundle, respectively. For an arbitrary vector bundle
$E\to M$, the sheaf of sections of $E$ is written
$\Gamma(E)$.

\section{Preliminaries on Courant algebroids, Dorfman brackets, dull brackets and Dorfman connections}
An anchored vector bundle is a vector bundle $Q\to M$ endowed with a
vector bundle morphism $\rho_Q\colon Q\to TM$ over the
identity.
Consider an anchored vector bundle $(\mathsf E\to M, \rho)$ and a
vector bundle $V$ over the same base $M$ together with a morphism
$\widetilde\rho\colon\mathsf E\to \operatorname{Der}(V)$, such
that the symbol of $\widetilde\rho(e)$ is $\rho(e)\in \mx(M)$ for all
$e\in \Gamma(\mathsf E)$.  Assume that $\mathsf E$ is paired with
itself via a nondegenerate pairing $\langle\cdot\,,\cdot\rangle\colon \mathsf
E\times_M \mathsf E\to V$ with values in $V$.  Then $\mathsf E\to M$ is a \textbf{
  Courant algebroid with pairing in $V$}
if
$\mathsf E$ is in addition equipped with an $\R$-bilinear bracket
$\lb\cdot\,,\cdot\rb$ on the smooth sections $\Gamma(\mathsf E)$ such
that the following conditions are satisfied:
\begin{enumerate}
\item $\lb e_1, \lb e_2, e_3\rb\rb= \lb \lb e_1, e_2\rb, e_3\rb+ \lb
  e_2, \lb e_1, e_3\rb\rb$,
\item $\widetilde\rho(e_1 )\langle e_2, e_3\rangle= \langle\lb e_1,
  e_2\rb, e_3\rangle + \langle e_2, \lb e_1 , e_3\rb\rangle$,
\item $\lb e_1, e_2\rb+\lb e_2, e_1\rb =\mathcal D\langle e_1 ,
  e_2\rangle$,
\item
$\widetilde\rho\lb e_1, e_2\rb = [\widetilde\rho(e_1), \widetilde\rho(e_2)]$
\end{enumerate}
for all $e_1, e_2, e_3\in\Gamma(\mathsf E)$ and $f\in C^\infty(M)$,
where $\mathcal D\colon \Gamma(V)\to \Gamma(\mathsf E)$ is defined by
$\langle \mathcal Dv, e\rangle=\widetilde\rho(e)(v)$ for all
$v\in\Gamma(V)$.  Note that \begin{enumerate}\setcounter{enumi}{4}
\item $\lb e_1, f e_2\rb= f \lb e_1 , e_2\rb+ (\rho(e_1 )f )e_2$
\end{enumerate}
for $e_1,e_2\in\Gamma(\mathsf E)$ and $f\in C^\infty(M)$ follows from
(2). If $V=\mathbb R\times M\to M$ is the trivial
bundle, then $\mathcal D =\rho^*\circ\dr \colon
C^\infty(M)\to\Gamma(\mathsf E)$, where $\mathsf E$ is identified with
$\mathsf E^*$ via the pairing.  The quadruple $(\mathsf E\to M, \rho,
\langle\cdot\,,\cdot\rangle, \lb\cdot\,,\cdot\rb)$ is then a
\textbf{Courant algebroid} \cite{LiWeXu97,Roytenberg99}; then
$\widetilde\rho=\rho$ and (4) follows from (2) and the nondegeneracy of
the pairing (see also \cite{Uchino02}).  Finally note that Courant
algebroids with a pairing in a vector bundle $E$ were defined in
\cite{ChLiSh10} and called \emph{$E$-Courant algebroids}.

\begin{example}\label{ex_pontryagin}\cite{Courant90a}
The direct sum $TM\oplus T^*M$ 
endowed with the projection on $TM$ as anchor map, $\rho=\pr_{TM}$, 
the symmetric bracket 
$\langle\cdot\,,\cdot\rangle$
given by 
\begin{equation}
\langle(v_m,\theta_m), (w_m,\eta_m)\rangle=\theta_m(w_m)+\eta_m(v_m)
\label{sym_bracket}
\end{equation}
for all $m\in M$, $v_m,w_m\in T_mM$ and $\alpha_m,\beta_m\in T_m^*M$
and the \textbf{Courant-Dorfman bracket} 
given by 
\begin{align}
\lb (X,\theta), (Y,\eta)\rb&=\left([X,Y], \ldr{X}\eta-\ip{Y}\dr\theta\right)\label{wrong_bracket}
\end{align}
for all $(X,\theta), (Y, \eta)\in\Gamma(TM\oplus T^*M)$,
yield the standard example of  a Courant algebroid, which is often called 
the \textbf{standard Courant algebroid over $M$}. 
The map $\mathcal D\colon  C^\infty(M)\to \Gamma(TM\oplus T^*M)$
is given by $\mathcal D f=(0, \dr f)$.
We are here particularly interested in the standard Courant algebroid
over the total space of a vector bundle. 
\medskip

Next we define dull
algebroids and Leibniz algebroids.
\end{example}
\begin{definition}
\begin{enumerate}
\item \cite{Jotz18a} A \textbf{dull algebroid} is an anchored vector
  bundle $(Q\to M,\rho)$ endowed with a bracket $\lb\cdot\,,\cdot\rb$ on $\Gamma(Q)$ satisfying $ \rho\lb
  q_1,q_2\rb=[\rho(q_1),\rho(q_2)]$, and
   the Leibniz identity in both terms
\begin{equation*} \lb f_1 q_1, f_2 q_2\rb= f_1 f_2\lb q_1, q_2\rb+
  f_1\rho(q_1)( f_2)q_2- f_2\rho(q_2)( f_1)q_1
\end{equation*}
for all $ f_1, f_2\in C^\infty(M)$, $q_1, q_2\in\Gamma(Q)$.
\item \cite{Baraglia12} A \textbf{Leibniz algebroid} is an anchored vector bundle
  $(Q\to M,\rho)$ endowed with a
  bracket $\lb\cdot\,,\cdot\rb$ on $\Gamma(Q)$ with $\lb q_1, f q_2
  \rb= f \lb q_1, q_2 \rb + \rho(q_1)(f) q_2\ \forall f \in
  C^{\infty}(M), q_1,q_2 \in \Gamma(Q)$, and satisfying the Jacobi
  identity in Leibniz form
\begin{equation*} 
\lb q_1, \lb q_2, q_3\rb\rb=\lb\lb q_1,q_2\rb, q_3\rb+\lb q_2, \lb q_1,q_3\rb\rb
\end{equation*}
for all $q_1, q_2, q_3\in\Gamma(Q)$.
\item A Leibniz algebroid $E'$ is
  \textbf{transitive} if the anchor $\rho\colon E' \rightarrow TM$ is
  surjective \cite{Baraglia12}. Then the Leibniz algebroid can be written $E'=TM \oplus E^*$ with $\rho=
  \pr_{TM}$ and $E \rightarrow M$ a vector bundle.  We call its bracket $\lb
  \cdot,\cdot \rb$ a \textbf{Dorfman bracket}\footnote{Occasionally
    the term \enquote{Dorfman bracket} is used for the bracket of
    arbitrary Leibniz algebroids in the literature, but in this paper
    it will exclusively refer to the case where the anchor is
    surjective and the underlying vector bundle is split.}.
\item A transitive Leibniz algebroid is
  \textbf{split} if there is a section $\sigma\colon TM \rightarrow
  E'$ of the anchor map such that $\sigma(\mx(M))$ is closed under
  the Leibniz bracket \cite{Baraglia12}.
\end{enumerate}
\end{definition}

First note that the definition of the Leibniz algebroid implies \cite{Baraglia12}
\[ \rho\lb q_1,q_2 \rb = [\rho(q_1),\rho(q_2)]\quad \text{ for all }
q_1,q_2 \in \Gamma(Q). \] 
Any split transitive Leibniz algebroid $E'$ forms a split short exact
sequence of vector bundles:
\begin{equation} 
0 \rightarrow E^* \hookrightarrow E' \stackrel{\rho}{\rightarrow} TM \rightarrow 0 
\end{equation} 
with $E^* = \ker \rho$. The splitting map $\sigma: TM \rightarrow E'$
induces an isomorphism $E' \cong TM \oplus E^*$. Since
$\sigma(\mx(M))$ is closed under the Leibniz bracket and $\rho \circ
\sigma= \operatorname{id}_{TM}$, we have $\lb \sigma(X), \sigma(Y)
\rb= \sigma[X,Y]$.  Thus, if we use $\sigma$ to define the isomorphism
$E'\to TM \oplus E^*$, we obtain a Dorfman bracket with the property
\begin{equation} 
  \lb (X,0),(Y,0) \rb \stackrel{\text{def}}{=} \lb \sigma(X),\sigma(Y) \rb = \sigma[X,Y]= ([X,Y],0) 
\end{equation} 
Correspondingly, we call a Dorfman bracket \emph{split} precisely if
it has this property.  \medskip

\medskip
Consider a dull algebroid $(Q,\rho,\lb\cdot\,,\cdot\rb)$. Then the bracket
can be dualised to a map 
\begin{equation*}
  \Delta\colon \Gamma(Q)\times\Gamma(Q^*)\to\Gamma(Q^*),
\quad \rho(q)\langle q', \tau\rangle=\langle \lb q,q'\rb,\tau\rangle
  +\langle q', \Delta_q\tau\rangle
\end{equation*} 
for all $q,q'\in\Gamma(Q)$ and $\tau\in\Gamma(Q^*)$.  The map $\Delta$
is then a \textbf{Dorfman ($Q$-)connection on $Q^*$} \cite{Jotz18a},
i.e.~an $\R$-bilinear map with
\begin{enumerate}
\item $\Delta_{ f q}\tau= f\Delta_q\tau+\langle q, \tau\rangle \cdot \rho^*\dr f$,
\item $\Delta_q( f \tau)= f\Delta_q\tau+\rho(q)(f)\tau$ and 
\item $\Delta_q(\rho^*\dr f)=\rho^*\dr(\ldr{\rho(q)})$
\end{enumerate}
 for all $ f\in C^\infty(M)$, $q,q'\in\Gamma(Q)$, $\tau\in\Gamma(Q^*)$.
The curvature of $\Delta$ is the map
  $R_\Delta\colon \Gamma(Q)\times\Gamma(Q)\to\Gamma(Q^*\otimes Q^*)$ defined
  on $q,q'\in\Gamma(Q)$ by
  $R_\Delta(q,q'):=\Delta_q\Delta_{q'}-\Delta_{q'}\Delta_q-\Delta_{\lb q,q'\rb}$.
For all $ f\in C^\infty(M)$ and $q_1,q_2,q_3\in \Gamma(Q)$, $\tau\in \Gamma(Q^*)$, we have 
\[\langle R_\Delta(q_1,q_2)\tau,q_3\rangle=\langle\lb\lb
q_1,q_2\rb,q_3\rb+\lb q_2,\lb q_1,q_3\rb\rb-\lb q_1,\lb q_2,q_3\rb\rb,
\tau\rangle.
\]

\medskip

Consider a Dorfman bracket 
$\lb \cdot\,,\cdot\rb\colon \Gamma(Q)\times\Gamma(Q)\to \Gamma(Q)$. Its dual map is
\[\mathcal D\colon \Gamma(Q)\to \operatorname{Der}(Q^*),
\]
defined by 
$\rho(q)\langle q', \tau\rangle=\langle q', \mathcal
D_{q}\tau\rangle+\langle \lb q,q'\rb,
\tau\rangle$
for all $q,q'\in \Gamma(Q)$ and
$\tau\in\Gamma(Q^*)$.
The Jacobi identity in Leibniz form for $\lb\cdot\,,\cdot\rb$ is equivalent to
\begin{equation}\label{eq_jac_dual}
\mathcal D_{q_1}\circ\mathcal D_{q_2}-\mathcal D_{q_2}\circ\mathcal D_{q_1}=\mathcal D_{\lb q_1,q_2\rb}
\end{equation}
for all $q_1,q_2\in\Gamma(Q)$. 

$\mathcal{D}$ allows the extension of the Dorfman bracket to all
tensor bundles of $Q$ via the Leibniz rule. In the theoretical physics
applications, this operation is called the \emph{generalised Lie
  derivative} due to its Lie algebra property.

\begin{example}\label{CD_ex}
  The bracket of a Courant algebroid $\mathsf E$ is a Dorfman
  bracket. Using the nondegenerate pairing to identify $\mathsf E$ with
  its dual, we find that $\mathcal D$ is in this case the ``adjoint
  action'': 
$\mathcal D_e=\lb e, \cdot\rb$
for $e\in\Gamma(\mathsf E)$.
\end{example}

\begin{example} \label{ex:forms} On any vector bundle of the form $TM
  \oplus E^*$ with $E=\wedge^{k_1} TM \oplus \dots \oplus \wedge^{k_l}
  TM$, there is a Dorfman bracket
  \begin{equation} \label{equ:formbr} \lb (X, \alpha), (Y,\beta) \rb =
    [X,Y] + \ldr{X} \beta - i_Y \dr \alpha \quad \text{ for } \quad (X,\alpha),
    (Y,\beta) \in \Gamma(TM\oplus E^*)
\end{equation} 
For simplicity of notation, consider the special case $TM \oplus
\wedge^k T^*M$ for the rest of this example -- the more general case
works in the same way.  Let $(T,\theta) \in \Gamma(\wedge^k TM \oplus
T^*M)$. Then we have
\begin{align} 
  \left\langle\mathcal{D}_{(X,\alpha)} (T,\theta), (Y,\beta) \right\rangle &= X\left\langle(T,\theta),(Y,\beta)\right\rangle - \left\langle \lb (X, \alpha), (Y,\beta) \rb, (T,\theta) \right\rangle \nonumber\\
  &= \left\langle \ldr{X} \theta,Y \right\rangle + \left\langle \ldr{X}T,
    \beta
  \right\rangle + \left\langle \ip{Y} \dr \alpha, T \right\rangle \nonumber \\
  &= \left\langle(\ldr{X} T, \ldr{X} \theta + (-1)^k \dr \alpha (T,\cdot)), Y+ \beta\right\rangle\nonumber
\end{align} 
which shows $\mathcal{D}_{(X,\alpha)} (T,\theta) = (\ldr{X} T, \ldr{X} \theta
+ (-1)^k \dr \alpha(T,\cdot))$.
\end{example} 

\begin{example} 
  \cite{Baraglia12} extensively discusses a generalisation of example
  \ref{ex:forms}, so-called closed-form Leibniz algebroids. All
  commonly studied examples of Dorfman brackets belong to this class
  of Leibniz algebroids.

  In addition to the terms in \eqref{equ:formbr}, closed form
  algebroids can for example contain terms that mix different degrees
  of differential forms:
\begin{equation} 
\lb (0;\alpha_k,0,0),(0;0,\beta_j,0) \rb = (-1)^{(k-1)j} (0;0,0,\dr \alpha_k \wedge \beta_j) 
\end{equation} 
for the Dorfman bracket on $TM \oplus \wedge^k T^*M \oplus \wedge^j T^*M \oplus \wedge^{k+j+1} T^*M$. 

Terms of this type correspond to terms of the following form in $\mathcal{D}$: 
\begin{align} 
&\left\langle\mathcal{D}_{(0;\alpha_k,0,0)} (T_k,T_j,T_{k+j+1};\theta), (Y;\beta_k,\beta_j,\beta_{j+k+1}) \right\rangle\nonumber \\
 &= - \left\langle\lb (0;\alpha_k,0,0),(Y;\beta_k,\beta_j,\beta_{j+k+1})\rb, (T_k,T_j,T_{k+j+1}; \theta) \right\rangle \nonumber \\ 
&= \left\langle(0;\ip{Y}\dr \alpha_k,0,(-1)^{(k-1)j +1}\dr \alpha_k \wedge \beta_j), (T_k,0,T_{k+j+1};0) \right\rangle \nonumber \\ 
&= \left\langle(0,(-1)^{(k-1)j +1}T_{k+j+1}\neg \dr \alpha_k,0;(-1)^k \ip{T_k} \dr \alpha),(Y;\beta_k,\beta_j,\beta_{k+j+1})\right\rangle\nonumber \end{align} 
and therefore
\begin{equation}\label{equ:mixterm}
\mathcal{D}_{(0;\alpha_k,0,0)} (0,0,T_{k+j+1};0) =(-1)^{(k-1)j +1} (0,T_{k+j+1}\neg \dr \alpha_k,0;0),
\end{equation}
where $\neg$ denotes contraction over the first (in this case) $(k+1)$ indices. 
\end{example} 

\begin{example} 
  A more complex example of \emph{closed-form algebroid} underlies the
  so-called $E_7$-exceptional generalised geometry (see
  \cite{Baraglia12, Hull07}).  
The vector bundle
\begin{equation} 
TM \oplus \wedge^2 T^*M \oplus \wedge^5 T^*M \oplus (\wedge^7 T^*M \otimes T^*M) 
\end{equation} 
carries a natural $E_7 \times \R^*$-structure and the Dorfman bracket (see \cite{Baraglia12})
\begin{align*} 
&\lb (X;\alpha_2,\alpha_5,u),(Y;\beta_2,\beta_5,v) \rb \\
&= ([X,Y];\ldr{X} \beta_2 - \ip{Y} \dr \alpha_2, \ldr{X} \beta_5- \ip{Y} \dr \alpha_5 + \dr \alpha_2 \wedge \beta_2,
 \ldr{X} v - \dr \alpha_2 \diamond \beta_5 + \dr \alpha_5 \diamond \beta_2), 
\end{align*} 
where $(\dr \alpha \diamond \beta)(X) = (\ip{X} \dr \alpha) \wedge
\beta$ for all $X \in \mx(M)$.  The dual map $\mathcal{D}$ is then
given as follows: $\mathcal{D}_{(X;\alpha_2,\alpha_5, u)}
(T_2,T_5, T_7 \otimes Z;\theta)$ is
\begin{align*} 
 (\ldr{X} T_2 - T_5 \neg \dr \alpha_2 + T_7 \neg \ip{Z} \dr \alpha_5,
\ldr{X} T_5 - T_7 \neg \ip{Z} \dr \alpha_2,  
0;\ldr{X}\theta +\dr \alpha_2 (T_2,\cdot) - \dr \alpha_5(T_5,\cdot))
\end{align*} 
\end{example} 

\begin{remark} 
  Note that all examples for Dorfman brackets in this paper are
  \emph{local}, i.e. their brackets are given in terms of differential
  operators in both components. There are non-local Leibniz
  algebroids, for an example see Appendix \ref{non-local}.
\end{remark}

\section{Linear sections of $TE\oplus T^*E\to E$}\label{lin_sec}
In this section, we recall some background notions on double vector
bundles. Then we describe the double
vector bundle structures on $TE$, on $T^*E$ and on $TE\oplus T^*E$,
for a vector bundle $E\rightarrow M$. In the last part of this section, we
characterise arbitrary linear sections of $TE\oplus T^*E \to E$ via a
certain class of derivations. 
\subsection{Double vector bundles and linear splittings}
We briefly recall the definitions of double vector bundles and of their
\textbf{linear} and \textbf{core} sections. We refer to
\cite{Pradines77,Mackenzie05,GrMe10a} for more detailed treatments.
A \textbf{double vector bundle} is a commutative square
\begin{equation*}
\begin{xy}
\xymatrix{
D \ar[r]^{\pi_B}\ar[d]_{\pi_A}& B\ar[d]^{q_B}\\
A\ar[r]_{q_A} & M}
\end{xy}
\end{equation*}
of vector bundles such that
\begin{equation}\label{add_add} (d_1+_Ad_2)+_B(d_3+_Ad_4)=(d_1+_Bd_3)+_A(d_2+_Bd_4)
\end{equation}
for $d_1,d_2,d_3,d_4\in D$ with $\pi_A(d_1)=\pi_A(d_2)$,
$\pi_A(d_3)=\pi_A(d_4)$ and $\pi_B(d_1)=\pi_B(d_3)$,
$\pi_B(d_2)=\pi_B(d_4)$.
Here, $+_A$ and $+_B$ are the additions in $D\to A$ and $D\to B$,
respectively.
The vector bundles $A$
and $B$ are called the \textbf{side bundles}. The \textbf{core} $C$ of
a double vector bundle is the intersection of the kernels of $\pi_A$
and of $\pi_B$. From \eqref{add_add} follows easily the existence of a
natural vector bundle structure on $C$ over
$M$. The
inclusion $C \hookrightarrow D$ is denoted by
$
C_m \ni c \longmapsto \overline{c} \in \pi_A^{-1}(0^A_m) \cap \pi_B^{-1}(0^B_m).
$

The space of sections
$\Gamma_B(D)$ is generated as a $C^{\infty}(B)$-module by two
special classes of sections (see \cite{Mackenzie11}), the
\textbf{linear} and the \textbf{core sections} which we now describe.
For a section $c\colon M \rightarrow C$, the corresponding
\textbf{core section} $c^\dagger\colon B \rightarrow D$ is defined as
$c^\dagger(b_m) = \widetilde{0}_{\vphantom{1}_{b_m}} +_A \overline{c(m)}$, $m \in M$, $b_m \in B_m$.
We denote the corresponding core section $A\to D$ by $c^\dagger$ also,
relying on the argument to distinguish between them. The space of core
sections of $D$ over $B$ is written as $\Gamma_B^c(D)$.

A section $\xi\in \Gamma_B(D)$ is called \textbf{linear} if $\xi\colon
B \rightarrow D$ is a bundle morphism from $B \rightarrow M$ to $D
\rightarrow A$ over a section $a\in\Gamma(A)$.  The space of linear
sections of $D$ over $B$ is denoted by $\Gamma^\ell_B(D)$.  Given
$\psi\in \Gamma(B^*\otimes C)$, there is a linear section
$\widetilde{\psi}\colon B\to D$ over the zero section $0^A\colon M\to A$
given by
$\widetilde{\psi}(b_m) = \widetilde{0}_{b_m}+_A \overline{\psi(b_m)}$.
We call $\widetilde{\psi}$ a \textbf{core-linear section}. 

\subsection{The tangent double and the cotangent double of a vector bundle}\label{cotangent_ex}
Let $q_E\colon E\to M$ be a vector bundle.  Then the tangent bundle
$TE$ has two vector bundle structures; one as the tangent bundle of
the manifold $E$, and the second as a vector bundle over $TM$. The
structure maps of $TE\to TM$ are the derivatives of the structure maps
of $E\to M$.
\begin{equation*}
\begin{xy}
\xymatrix{
TE \ar[d]_{Tq_E}\ar[r]^{p_E}& E\ar[d]^{q_E}\\
 TM\ar[r]_{p_M}& M}
\end{xy}
\end{equation*} 
The space $TE$ is a double vector bundle with core bundle
$E \to M$. The map $\bar{}\,\colon E\to p_E^{-1}(0^E)\cap
(Tq_E)^{-1}(0^{TM})$ sends $e_m\in E_m$ to $\bar
e_m=\left.\frac{d}{dt}\right\vert_{t=0}te_m\in T_{0^E_m}E$.
Hence the core vector field corresponding to $e \in \Gamma(E)$ is the
vertical lift $e^{\uparrow}\colon E \to TE$, i.e.~the vector field with
flow $\phi^{e^\uparrow}\colon E\times \R\to E$, $\phi^{e^{\uparrow}}_t(e'_m)=e'_m+te(m)$. An
element of $\Gamma^\ell_E(TE)=\mx^\ell(E)$ is called a \textbf{linear
  vector field}. It is well-known (see e.g.~\cite{Mackenzie05}) that a
linear vector field $\xi\in\mx^l(E)$ covering $X\in\mx(M)$ corresponds
to a derivation $D\colon \Gamma(E) \to \Gamma(E)$ over $X\in
\mx(M)$. The precise correspondence is
given by 
\begin{equation}\label{ableitungen}
\xi(\ell_{\varepsilon}) 
= \ell_{D^*(\varepsilon)} \,\,\,\, \text{ and }  \,\,\, \xi(q_E^*f)= q_E^*(X(f))
\end{equation}
for all $\varepsilon\in\Gamma(E^*)$ and $f\in C^\infty(M)$, where
$D^*\colon\Gamma(E^*)\to\Gamma(E^*)$ is the dual derivation to $D$.
We write $\widehat D$ for the linear vector field in $\mx^l(E)$
corresponding in this manner to a derivation $D$ of $\Gamma(E)$. 
Given a derivation $D$ over $X\in\mx(M)$, the explicit formula for $\widehat D$ is 
\begin{equation}\label{explicit_hat_D}
\widehat D(e_m)=T_meX(m)+_E\left.\frac{d}{dt}\right\an{t=0}(e_m-tD(e)(m))
\end{equation}
for $e_m\in E$ and any $e\in\Gamma(E)$ such that $e(m)=e_m$. 
\bigskip

Dualizing $TE$ over $E$, we get 
the double vector bundle 
\begin{align*}
\begin{xy}
\xymatrix{
T^*E\ar[r]^{c_E}\ar[d]_{r_E} &E\ar[d]^{q_E}\\
E^*\ar[r]_{q_{E^*}}&M
}\end{xy}.
\end{align*}
The map $r_E$ is given as follows. For $\theta_{e_m}$, 
$r_E(\theta_{e_m})\in E^*_m$, 
\[\langle r_E(\theta_{e_m}), e'_m\rangle=\left\langle \theta_{e_m},
  \left.\frac{d}{dt}\right\an{t=0}e_m+te'_m\right\rangle
\]
for all $e'_m\in E_m$.
The addition in $T^*E\to E^*$ is defined 
as follows. If $\theta_{e_m}$ and $\omega_{e_m'}$ are such that 
$r_E(\theta_{e_m})=r_E(\omega_{e_m'})=\varepsilon_m\in E^*_m$, then the sum
$\theta_{e_m}+_{r_E}\omega_{e_m'}\in T_{e_m+e_m'}^*E$
is given by 
\[\langle  \theta_{e_m}+_{E^*}\omega_{e_m'}, v_{e_m}+_{TM}v_{e_m'}\rangle
=\langle  \theta_{e_m}, v_{e_m}\rangle+\langle\omega_{e_m'}, v_{e_m'}\rangle
\]
for all $v_{e_m}\in T_{e_m}E$, $v_{e'_m}\in T_{e'm}E$ such that
$(q_E)_*(v_{e_m})=(q_E)_*(v_{e_m'})$.  

For $\varepsilon\in\Gamma(E^*)$, the one-form $\dr\ell_\varepsilon$ is
linear over $\varepsilon$: we have
$r_E(\dr_{e_m}\ell_\varepsilon)=\varepsilon(m)$ for all $m\in M$ and
the sum $\dr_{e_m}\ell_\varepsilon+_{r_E}\dr_{e_m'}\ell_\varepsilon$
equals $\dr_{e_m+e_m'}\ell_\varepsilon$. For $\theta\in\Omega^1(M)$,
the one-form $q_E^*\theta$ is a core section of $TE\to E$:
$r_E((q_E^*\theta)(e_m))=0^{E^*}_m$, and for 
$\phi\in\Gamma(\operatorname{Hom}(E,T^*M))$ the core-linear section $\widetilde\phi\in\Gamma_E^l(T^*E)$ 
is given by $\widetilde\phi(e_m)=(T_{e_m}q_E)^*\phi(e_m)$ for all $e_m\in E$.  The vector space $T^*_{e_m}E$ is
spanned by $\dr_{e_m}\ell_\varepsilon$ and $\dr_{e_m}(q_E^* f)$ for
all $\varepsilon\in\Gamma(E^*)$ and $ f\in C^\infty(M)$. Finally note that
 $\dr\ell_{f\varepsilon}=q_E^*\dr\ell\varepsilon+\widetilde{\varepsilon\otimes \dr f}$
for all $\varepsilon\in\Gamma(E^*)$ and $f\in C^\infty(M)$.

 By taking the direct sum over $E$ of $TE$ and $T^*E$, we get
a double vector bundle
\begin{align*}
\begin{xy}
\xymatrix{
TE\oplus T^*E\ar[r]^{\quad \pi_E}\ar[d]_{\Phi_E} &E\ar[d]^{q_E}\\
TM\oplus E^*\ar[r]_{\quad q_{TM\oplus E^*}}&M
}\end{xy}
\end{align*}
with side projection $\Phi_E=(q_E)_*\oplus r_E$ and core $E\oplus
T^*M$.  In the following, for any section $(e,\theta)$ of $E\oplus
T^*M$, the vertical section
$(e,\theta)^\uparrow\in\Gamma_E(T^{q_E}E\oplus (T^{q_E}E)^\circ)$ is
the pair defined by
\begin{equation}\label{def_of_vert_pair}
(e,\theta)^\uparrow(e_m')=\left(\left.\frac{d}{dt}\right\an{t=0}e_m'+te(m), (T_{e'_m}q_E)^t\theta(m)\right)
\end{equation}
for all $e_m'\in E$.  Note that by construction the vertical sections 
$(e,\theta)^\uparrow$ are core sections of $TE\oplus T^*E$ as a vector
bundle over $E$.

\medskip

The standard Courant algebroid structure over $E$ is linear and 
\begin{equation*}
\begin{xy}
  \xymatrix{
    TE\oplus T^*E\ar[rr]^{\Phi_E:=({q_E}_*, r_E)}\ar[d]_{\pi_E}&& TM\oplus E^*\ar[d]\\
    E\ar[rr]_{q_E}&&M }
\end{xy}
\end{equation*} is a VB-Courant algebroid (\cite{Li-Bland12}, see also \cite{Jotz19b}) with base $E$
and side $TM\oplus E^*\to M$, and 
with core $E\oplus T^*M\to M$.

The anchor $\Theta=\pr_{TE}\colon TE\oplus T^*E\to TE$ restricts to
the map $\partial_{E}=\pr_{E}\colon E\oplus T^*M\to E$ on the cores,
and defines an anchor $\rho_{TM\oplus E^*}=\pr_{TM}\colon TM\oplus
E^*\to TM$ on the side.  In other words, the anchor of
$(e,\theta)^\uparrow$ is $e^\uparrow\in \mx^c(E)$ and if
$\chi$ is a linear section of $TE\oplus T^*E\to E$
over $(X,\epsilon)\in\Gamma(TM\oplus E^*)$, the anchor
$\Theta(\chi)\in \mx^l(E)$ is linear over
$X$.

\subsection{The first jet bundle of a vector bundle.}\label{jet}
For convenience of the exposition in the next section and later on in
the paper, we recall here some basic facts about the first jet bundle
of a vector bundle, and we set some notations.
\medskip

  The first jet bundle
$J^1E$ of a vector bundle $E$ over $M$ is the space
$\{\eta_m\in\operatorname{Hom}(T_mM, T_{e_m}E) \mid m\in M,\, e_m\in
E_m\}$.  It has a projection to $\pr_E\colon J^1E\to E$ to $E$, $
\eta_m\in\operatorname{Hom}(T_mM, T_{e_m}E) \mapsto e_m$ and a
projection to $\pr\colon J^1E\to M$ to $M$, $\eta_m\mapsto m$.  This
second projection is the projection of a vector bundle structure over
$M$; for $\eta_m\in\operatorname{Hom}(T_mM, T_{e_m}E)$ and
$\mu_m\in\operatorname{Hom}(T_mM, T_{e'_m}E)$, we have
$\alpha\eta_m+\beta\mu_m\in\operatorname{Hom}(T_mM,T_{\alpha e_m+\beta
  e'_m}E)$,
\[
(\alpha\eta_m+\beta\mu_m)(v_m)=\alpha\eta_m(v_m)+_{TM}\beta\mu_m(v_m),
\]
where $+_{TM}$ is the addition in the tangent prolongation $TE\to TM$
of the vector bundle $E\to M$. 
For each $\phi_m\in \Hom(T_mM,E_m)$ we get an element $\iota(\phi_m)\in J^1E_m$
with $\pr_E(\iota(\phi_m))=0^E_m$, $\iota(\phi_m)(v_m)=T_m0^E(v_m)+\left.\frac{d}{dt}\right\vert_{t=0}t\phi_m(v_m)$.

Two elements $\eta_m\in\operatorname{Hom}(T_mM, T_{e_m}E)$ and
$\mu_m\in\operatorname{Hom}(T_mM, T_{e_m}E)$ differ by 
such an element $\phi_m\in\Hom(T_mM,E_m)$ and we have a short exact
sequence
\[ 0\longrightarrow \Hom(TM,E)\overset{\iota}{\longrightarrow}
J^1E \overset{\pr_E}{\longrightarrow} E\rightarrow 0
\]
of vector bundles over $M$. The corresponding sequence 
\[ 0\longrightarrow \Gamma(\Hom(TM,E))\overset{\iota}{\longrightarrow}
\Gamma(J^1E) \overset{\pr_E}{\longrightarrow} \Gamma(E)\rightarrow 0
\]
is canonically split by the map $j^1\colon\Gamma(E)\to\Gamma(J^1E)$,
$(j^1e)_m\in \operatorname{Hom}(T_mM,T_{e_m}E)$,
$(j^1e)_m(v_m)=T_me(v_m)$. In particular, given $m\in M$ and two
sections $e,e'\in\Gamma(E)$ with $e(m)=e'(m)$, we find
$(j^1e)_m=(j^1e')_m+\iota(\phi_m)$ for a $\phi_m\in \Hom(T_mM,E_m)$.
In other words, there is a canonical isomorphism
\begin{equation}\label{can_iso_jet}
\begin{split} 
\Gamma(J^1E)&\cong \Gamma(E)\oplus \Gamma(T^*M\otimes E),
\quad \mu \mapsto (\pr_{E}\mu,\mu-j^1(\pr_{E}\mu)). 
\end{split}
\end{equation} 

Furthermore, we have $j^1(e_1+e_2)=j^1e_1+j^1e_2$ and
$J^1(fe)=fj^1e+\iota(\dr f\otimes e)$ for all $e,e_1,e_2\in\Gamma(E)$
and $f\in C^\infty(M)$.  \medskip

Note finally that every element $\mu \in J^1_m(E)$ can be written
$\mu=(j^1e)_m$ with a local section $e \in
\Gamma(E)$. Furthermore, two local sections $e,e'\in\Gamma(E)$ define
the same element $(j^1e)_m=(j^1e')_m=:\mu\in J^1_m(E)$ if and only if
$T_me=T_me'$ as vector space morphisms $T_mM\to T_{e(m)}E$. That is,
$e(m)=e'(m)$ and $T_me(v_m)=T_me'(v_m)$ for all $m\in T_mM$.  The
later is equivalent to $v_m\langle \epsilon,e\rangle=(T_me
v_m)(\ell_\epsilon)=(T_me'v_m)(\ell_\epsilon) =v_m\langle
\epsilon,e'\rangle$ for all $v_m\in T_mM$ and all
$\epsilon\in\Gamma(E^*)$, and so to
\[\langle \dr_{\epsilon(m)}\ell_e,T_m\epsilon v_m\rangle=\langle
\dr_{\epsilon(m)}\ell_{e'},T_m\epsilon v_m\rangle\] for all $v_m\in T_mM$ and
all $\epsilon\in\Gamma(E^*)$.  Hence, $(j^1e)_m=(j^1e')_m$ if and only
if $\dr_{\epsilon}\ell_e=\dr_{\epsilon}\ell_{e'}$ for all $\epsilon\neq 0\in
E_m^*$; by continuity then
$\dr_{\epsilon}\ell_e=\dr_{\epsilon}\ell_{e'}$ for all $\epsilon\in E_m^*$.

\subsection{The $E^*$-valued Courant algebroid structure on the fat bundle 
$\widehat{E}$}\label{sec:fatb}
The space $\Gamma^l_E(TE\oplus T^*E)$ is a $C^\infty(M)$-module:
choose $f\in C^\infty(M)$ and $\chi\in\Gamma^l_E(TE\oplus T^*E)$ a
linear section over $\nu\in\Gamma(TM\oplus E^*)$. Then $q_E^*f\cdot
\chi$ is linear over $f\nu\in\Gamma(TM\oplus E^*)$.  The space
$\Gamma^l_E(TE\oplus T^*E)$ is a locally free and finitely generated
$C^{\infty}(M)$-module (this follows from the existence of local
splittings).  Hence, there is a vector bundle $\widehat{E}$ over $M$
such that $\Gamma^l_E(TE\oplus T^*E)$ is isomorphic to
$\Gamma(\widehat{E})$ as $C^{\infty}(M)$-modules. The vector bundle
$\widehat{E}$ is called the \emph{fat vector bundle} defined by
$\Gamma^l_E(TE\oplus T^*E)$. We prove below that it is isomorphic to
$\operatorname{Der}(E^*)\oplus J^1(E^*)$, where $\operatorname{Der}(E^*)$ is the
bundle of derivations on $E^*$, and $J^1(E^*)$ the first jet bundle.
\medskip

First recall that \eqref{ableitungen}
defines a
bijection between the linear vector fields $\mx^l(E)$ 
and $\Gamma(\operatorname{Der}(E^*))$. It is easy to see from
\eqref{ableitungen} that this bijection is a morphism of
$C^\infty(M)$-modules. Hence, the fat bundle defined by
$\mx^l(E)=\Gamma_E^l(TE)$ is the vector bundle $\operatorname{Der}(E^*)$.

Next note that  $\Gamma_E^l(T^*E)$ fits in the
short exact sequence 
\[ 0\longrightarrow\Gamma(\operatorname{Hom}(E,T^*M))\overset{\widetilde{\cdot}}{\longrightarrow}
\Gamma_E^l(T^*E)\overset{r_E}{\longrightarrow} \Gamma(E^*)\longrightarrow 0,
\] of $C^\infty(M)$-modules, where the second map sends $\phi\in
\Gamma(\operatorname{Hom}(E,T^*M))$ to the core-linear section
$\widetilde\phi\in\Gamma_E^l(T^*E)$, $\widetilde\phi(e)=(T_eq_E)^*\phi(e)$ for all $e\in E$,
and the third map sends $\theta\in \Gamma_E^l(T^*E)$ to its base
section $r_E\theta$ in $\Gamma(E^*)$. 
We define $\Psi\colon \Gamma(J^1E^*)\to \Gamma^l_ET^*E)$ by
$\Psi(j^1\epsilon)=\dr \ell_\epsilon$ for $\epsilon\in\Gamma(E^*)$ and
$\Psi(\iota\phi)=\widetilde{\phi^*}\in\Gamma_E^l(T^*E)$ for
$\phi\in\Gamma(\operatorname{Hom}(TM,E^*))$.
 The map $\Psi$ is
$C^\infty(M)$-linear  and we get the following commutative diagram
of morphisms of $C^\infty(M)$-modules
\begin{equation*}
\begin{xy}
  \xymatrix{
   0\ar[r]& \Gamma(\operatorname{Hom}(TM,E^*))\ar[d]_{(\cdot)^*}\ar[r]^{\quad \iota}&\Gamma(J^1E^*)\ar[d]^{\Psi}\ar[r]^{\pr_{E^*}}&\Gamma(E^*)\ar[r]\ar[d]^{\Id}&0\\
   0\ar[r]& \Gamma(\operatorname{Hom}(E,T^*M))\ar[r]_{\quad \widetilde{\cdot}}&\Gamma_E^l(T^*E)\ar[r]_{r_E}&\Gamma(E^*)\ar[r]&0\\
}
\end{xy}
\end{equation*}
with short exact sequences in the top and bottom rows. Since the left
and right vertical arrows are isomorphisms, $\Psi$ is an isomorphism
by the five lemma.
Since $\Psi$ is an isomorphism of $C^\infty(M)$-modules, we obtain a vector bundle isomorphism 
$\psi\colon J^1E^*\to\reallywidehat{T^*E}$, where $\reallywidehat{T^*E}$ is the fat bundle defined by $\Gamma_E^l(T^*E)$.
Finally we obtain a vector bundle isomorphism
\begin{equation} 
\Theta\colon\operatorname{Der}(E^*)\oplus J^1(E^*)\rightarrow \widehat{E},\qquad 
(D_m,(j^1\epsilon)_m)\mapsto {\rm ev}_m\left(\widehat{D^*},\dr \ell_{\epsilon}\right).
\end{equation}

Recall that for a linear section $\chi\in\Gamma_E^l(TE\oplus T^*E)$,
there exists a section $\nu\in\Gamma(TM\oplus E^*)$ such that
$\pi_{TM\oplus E^*}\circ \chi = \nu \circ q_E$. The map $\chi \mapsto
\nu$ induces a short exact sequence of vector bundles
\begin{equation*}
  0 \longrightarrow E^*\otimes (E\oplus T^*M) \hookrightarrow \widehat{E} \longrightarrow TM\oplus E^* \longrightarrow 0.
\end{equation*}

Note that the restriction of the pairing on $TE\oplus T^*E$ to linear
sections of $TE\oplus T^*E$ defines a nondegenerate pairing on
$\widehat{E}$ with values in $E^*$.  Since the Courant bracket of
linear sections is again linear, the vector bundle $\widehat{E}$
inherits a Courant algebroid structure with pairing in $E^*$ (see
\cite{Jotz19b}). In particular, the Courant algebroid structure on
$TE\oplus T^*E$ defines a Leibniz bracket on sections of $
\operatorname{Der}(E^*)\oplus J^1(E^*)$ and a pairing with values in
$E^*$ on \[\left(\operatorname{Der}(E^*)\oplus
  J^1(E^*)\right)\times_M\left(\operatorname{Der}(E^*)\oplus J^1(E^*)
\right).\] This is called an \emph{Omni-Lie algebroid} in \cite{ChLi10},
see also \cite{ChZhSh11}.  The symmetric bilinear nondegenerate
pairing with values in $E^*$ on $\widehat E$ is given by $\langle
\Theta(D(m)),\Theta((j^1\epsilon)_m+\iota\phi_m)\rangle =\langle \widehat{D^*},
\dr\ell_\epsilon+\widetilde\phi\rangle (m)=D(\epsilon)(m)+\phi^*(X)(m)$
for $D$ a derivation with symbol $X\in\mx(M)$.  Here, the second
term is the evaluation at $m\in M$ of 
the linear function $\ell_{D\epsilon+\phi^*X}$, when identified
with $D\epsilon+\phi^*X\in\Gamma(E^*)$. Hence, the
corresponding symmetric bilinear nondegenerate pairing with values in
$E^*$ on $J^1(E^*)\oplus \operatorname{Der}(E^*)$ is given by $
\left<D_m,(j^1\epsilon)_m+\iota\phi_m\right>=D_m(\epsilon)+\phi(X)(m) $ for $\epsilon\in
\Gamma(E^*)$, $\phi\in\Gamma(\operatorname{Hom}(TM,E^*))$ and  $D_m\in \mathcal{D}_m(E^*)$ with symbol $X\in\mx(M)$.

\subsection{Linear sections of $TE\oplus T^*E\to
  E$} \label{subsec:VBCA} In this section we build on the techniques
summarised in Section \ref{cotangent_ex} and we prove original results on linear
sections of $TE\oplus T^*E\to E$. Those results will be the basis of
our main theorem in Section \ref{sec:Dorfmanlifts}.

We consider a linear section $\chi\in\Gamma^l_E(TE\oplus T^*E)$ over a
pair $(X,\varepsilon)\in\Gamma(TM\oplus E^*)$.  Given a section
$e\in\Gamma(E)$, the difference
$\chi(e(m))-(T_meX(m),\dr_{e(m)}\ell_\varepsilon)$ projects to $e(m)$
in $E$ and to $0_m\in TM\oplus E^*$ and we can define a section
$D_\chi(e,0)\colon M\to E\oplus T^*M$ by
\[\chi(e(m))-(T_meX(m),\dr_{e(m)}\ell_\varepsilon)=-D_\chi(e,0)^\uparrow(e(m))
\]
for all $m\in M$. By construction and the scalar multiplication in the
fibers of $TE\oplus T^*E\to TM\oplus E^*$, we get
$D_\chi(re,0)=rD_\chi(e,0)$ for a real number $r\in\R$, and
$D_\chi(e_1+e_2,0)=D_\chi(e_1,0)+D_\chi(e_2,0)$ for $e_1,e_2\in \Gamma(E)$.  For a smooth
function $f\in C^\infty(M)$, we have $\chi((fe)(m))=\chi(f(m)e(m))$
and
\[\left(T_m(fe)X(m),\dr_{f(m)e(m)}\ell_\varepsilon\right)
=\left(T_m(f(m)e)X(m)+(X(f)e)^\uparrow(f(m)e(m)),\dr_{f(m)e(m)}\ell_\varepsilon\right).\]
Hence, we find that 
\begin{equation}\label{D_chi_der}
D_\chi(fe,0)=fD_\chi(e,0)+(X(f)e,0).
\end{equation}
Now we set $D_\chi\colon\Gamma(E\oplus T^*M)\to\Gamma(E\oplus T^*M)$,
$D_\chi(e,\theta)=D_\chi(e,0)+(0,\ldr{X}\theta)$. \eqref{D_chi_der}
and Theorem \ref{prop_1_chi} below shows that $D_\chi$ is a smooth
derivation. We have found the following result:

\begin{theorem}\label{linear_are_der_thm}
Let $\chi$ be a linear section of $TE\oplus T^*E\to E$ over a pair $(X,\varepsilon)\in\Gamma(TM\oplus E^*)$.
Then there exists a unique derivation 
$D_\chi\colon\Gamma(E\oplus T^*M)\to\Gamma(E\oplus T^*M)$ with symbol $X\in\mx(M)$ and which satisfies 
\begin{enumerate}
\item $D_\chi(e,\theta)=D_\chi(e,0)+(0,\ldr{X}\theta)$ and 
\item $\chi(e(m))=(T_meX(m),\dr_{e(m)}\ell_\varepsilon)-D_\chi(e,0)^\uparrow(e(m))$,
\end{enumerate}
for all $e\in\Gamma(E)$ and $\theta\in\Omega^1(M)$.
\end{theorem}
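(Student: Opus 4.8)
The map $D_\chi$ has in fact already been built pointwise in the paragraph preceding the statement, so the plan is to check that this construction is (a) well defined, (b) a smooth derivation with symbol $X$, and (c) forced by conditions (1)--(2); the last point gives uniqueness.

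For well-definedness I would note that $\chi(e(m))$ and $(T_meX(m),\dr_{e(m)}\ell_\varepsilon)$ both sit over the base point $e(m)\in E$ and have the same image under the side projection $\Phi_E=((q_E)_*,r_E)$: indeed $(q_E)_*\bigl(T_meX(m)\bigr)=X(m)$ because $q_E\circ e=\id_M$, and $r_E(\dr_{e(m)}\ell_\varepsilon)=\varepsilon(m)$, so both map to $(X(m),\varepsilon(m))$. Hence their difference, formed in the vector bundle $TE\oplus T^*E\to E$, lies in $\ker\Phi_E$ and projects to $e(m)$ under $\pi_E$; such elements are precisely the values of the injective vertical lift $(\cdot)^\uparrow$ of the core $E\oplus T^*M$ (see \eqref{def_of_vert_pair}), so there is a unique $D_\chi(e,0)(m)\in(E\oplus T^*M)_m$ realizing it. Additivity and $\R$-homogeneity in $e$ are as already recorded.

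The derivation property is then a one-line check. Extending by $D_\chi(e,\theta)=D_\chi(e,0)+(0,\ldr{X}\theta)$ and using \eqref{D_chi_der} together with $\ldr{X}(f\theta)=f\ldr{X}\theta+X(f)\theta$, one finds
\begin{align*}
D_\chi(f(e,\theta)) &= D_\chi(fe,0)+(0,\ldr{X}(f\theta))\\
&= fD_\chi(e,0)+(X(f)e,0)+(0,f\ldr{X}\theta+X(f)\theta)\\
&= fD_\chi(e,\theta)+X(f)(e,\theta),
\end{align*}
so that $D_\chi$ satisfies the Leibniz rule of a derivation of $E\oplus T^*M$ with symbol $X$, both summands being governed by the same $X$.

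For smoothness I would work in a local frame of $E$ over a chart of $M$: there the vertical lift $(\cdot)^\uparrow$ is a vector-bundle isomorphism of $(E\oplus T^*M)|_U$ onto the core that depends smoothly on the base point, hence has a smooth inverse. Since $m\mapsto (T_meX(m),\dr_{e(m)}\ell_\varepsilon)-\chi(e(m))$ is a smooth core-valued map (all of $\chi$, $e$, $X$, $\ell_\varepsilon$ being smooth), composing with this inverse shows that $D_\chi(e,0)$, and therefore $D_\chi$, is smooth. Finally, uniqueness is immediate: condition (2) together with injectivity of $(\cdot)^\uparrow$ pins down $D(e,0)$ for any derivation $D$ meeting (2), condition (1) then fixes the value on $(e,\theta)$, and since every section of $E\oplus T^*M$ splits as $(e,0)+(0,\theta)$ this determines $D$ completely, forcing $D=D_\chi$. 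The only genuinely non-formal point is smoothness, which is cleanest in a local splitting (as in the more explicit description to follow); everything else is algebraic bookkeeping on top of \eqref{D_chi_der}.
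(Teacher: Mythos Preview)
Your proposal is correct and follows essentially the same construction as the paper, which assembles the proof in the paragraph preceding the theorem rather than giving a separate argument. The one substantive difference is how smoothness of $D_\chi$ is justified: the paper forward-references Theorem~\ref{prop_1_chi} (the identity $\lb\chi,\tau^\uparrow\rb=D_\chi\tau^\uparrow$, which expresses $D_\chi$ through a manifestly smooth Courant--Dorfman bracket), whereas you argue directly via the smooth inverse of the vertical lift in a local frame. Your route is more self-contained and avoids the slightly awkward forward reference; the paper's route has the advantage of packaging $D_\chi$ in a formula that is reused later. You also spell out uniqueness explicitly, which the paper leaves implicit in the construction.
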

Conversely, given a pair $(X,\varepsilon)\in\Gamma(TM\oplus E^*)$ and
a smooth derivation $D\colon \Gamma(E\oplus T^*M)\to\Gamma(E\oplus
T^*M)$ over $X\in\mx(M)$, we write $\chi_{\varepsilon,D}$ for the
linear section defined by
\[\chi_{\varepsilon,D}(e(m))=(T_meX(m),\dr_{e(m)}\ell_\varepsilon)-D(e,0)^\uparrow(e(m))
\]
for all $e\in\Gamma(E)$.  Note that (1) in the last theorem shows that
for each $\chi \in\Gamma_E^l(TE\oplus T^*E)$ there exist a
derivation $d_\chi\in\Gamma(\operatorname{Der}(E))$ and a tensor
$\phi_\chi\in \Gamma(E^*\otimes T^*M)$ with $D_\chi(e,0)=(d_\chi
e,\phi_\chi(e))$. More precisely, $d_\chi=\pr_E\circ D_\chi\circ
\iota_E\colon \Gamma(E)\to\Gamma(E)$ is a derivation of $E$ with
symbol $X$ and the vector bundle morphism is $\phi_\chi=\pr_{T^*M}\circ D_\chi\circ
\iota_E\colon E\to T^*M$. The linear
section $\chi$ can then be written
\begin{equation*} \label{equ:chi}
\chi=\left(\widehat{d_\chi},\dr\ell_\varepsilon-\widetilde{\phi_\chi}\right)
\end{equation*} 

\begin{remark}
With the results in Section \ref{sec:fatb}, we can phrase this
correspondence in terms of the bundle isomorphism
$\widehat{E}\cong\operatorname{Der}(E^*)\oplus J^1(E^*)$: 
$\chi=(\widehat{d_{\chi}},\dr
\ell_{\varepsilon}-\widetilde{\phi_{\chi}})\in
\Gamma(\widehat{E})$ corresponds to $(d_\chi, j^1\varepsilon-\iota(\phi_{\chi}))$ in
$\Gamma(\operatorname{Der}(E^*)\oplus J^1(E^*))$. 
\end{remark} 

We can use these results on linear sections to prove the following: 
\begin{theorem}\label{prop_1_chi}
  Let $\chi$ be a linear section of $TE\oplus T^*E \to E$ over
  $(X,\varepsilon)\in\Gamma(TM\oplus E^*)$.  The Courant-Dorfman
  bracket on sections of $TE\oplus T^*E\to E$ satisfies
\begin{equation*}
\left\lb\chi, \tau^\uparrow\right\rb=D_\chi\tau^\uparrow
\end{equation*}
and the pairing 
\begin{equation*}
\langle \chi, \tau^\uparrow\rangle=q_E^*\langle (X,\varepsilon), \tau\rangle.
\end{equation*}
for all $\tau\in\Gamma(E\oplus T^*M)$.
The anchor satisfies
$\pr_{TE}(\chi)=\widehat{d_\chi}$.

\end{theorem}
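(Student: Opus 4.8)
The plan is to reduce everything to the explicit description of $\chi$ obtained just above, namely $\chi=(\widehat{d_\chi},\dr\ell_\varepsilon-\widetilde{\phi_\chi})$ with $d_\chi=\pr_E\circ D_\chi\circ\iota_E$ a derivation of $E$ over $X$ and $\phi_\chi\in\Gamma(E^*\otimes T^*M)$, together with the explicit Courant--Dorfman bracket \eqref{wrong_bracket} and the symmetric pairing \eqref{sym_bracket} on $TE\oplus T^*E$. Writing $\tau=(e,\theta)\in\Gamma(E\oplus T^*M)$, the vertical section is $\tau^\uparrow=(e^\uparrow,q_E^*\theta)$ by \eqref{def_of_vert_pair}, where $e^\uparrow$ is the vertical lift and $q_E^*\theta$ the core one-form. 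I would first dispatch the anchor and the pairing, which are direct substitutions, and then treat the bracket by splitting it into its $TE$- and $T^*E$-components.

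The anchor is immediate: since the anchor is $\pr_{TE}$ and the $TE$-component of $\chi$ is $\widehat{d_\chi}$, we get $\pr_{TE}(\chi)=\widehat{d_\chi}$. For the pairing I would expand $\langle\chi,\tau^\uparrow\rangle$ using \eqref{sym_bracket}, which gives the sum of $(\dr\ell_\varepsilon-\widetilde{\phi_\chi})(e^\uparrow)$ and $(q_E^*\theta)(\widehat{d_\chi})$. Here $\widetilde{\phi_\chi}(e^\uparrow)=0$ because $e^\uparrow$ is $q_E$-vertical while $\widetilde{\phi_\chi}(e_m)=(T_{e_m}q_E)^*\phi_\chi(e_m)$ annihilates vertical vectors; evaluating the vertical lift on the linear function gives $\dr\ell_\varepsilon(e^\uparrow)=q_E^*\langle\varepsilon,e\rangle$; and since $\widehat{d_\chi}$ covers $X$ one has $(q_E^*\theta)(\widehat{d_\chi})=q_E^*\langle\theta,X\rangle$. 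Summing yields $q_E^*(\langle\varepsilon,e\rangle+\langle\theta,X\rangle)=q_E^*\langle(X,\varepsilon),\tau\rangle$, as claimed.

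For the bracket I would apply \eqref{wrong_bracket} to $\chi=(\widehat{d_\chi},\dr\ell_\varepsilon-\widetilde{\phi_\chi})$ and $\tau^\uparrow=(e^\uparrow,q_E^*\theta)$, obtaining the $TE$-component $[\widehat{d_\chi},e^\uparrow]$ and the $T^*E$-component $\ldr{\widehat{d_\chi}}(q_E^*\theta)-\ip{e^\uparrow}\dr(\dr\ell_\varepsilon-\widetilde{\phi_\chi})$. The tangent part is handled by the standard identity $[\widehat{d_\chi},e^\uparrow]=(d_\chi e)^\uparrow$ for linear vector fields, which one checks on $\ell_\delta$ and $q_E^*f$ using the dual-derivation relation $X\langle\delta,e\rangle=\langle d_\chi^*\delta,e\rangle+\langle\delta,d_\chi e\rangle$. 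In the cotangent part, $\dr\dr\ell_\varepsilon=0$, so it reduces to $\ldr{\widehat{d_\chi}}(q_E^*\theta)+\ip{e^\uparrow}\dr\widetilde{\phi_\chi}$. Since $\widehat{d_\chi}$ is $q_E$-related to $X$, Cartan's formula together with $\dr q_E^*=q_E^*\dr$ and $\ip{\widehat{d_\chi}}q_E^*\omega=q_E^*\ip{X}\omega$ gives $\ldr{\widehat{d_\chi}}(q_E^*\theta)=q_E^*(\ldr{X}\theta)$. The remaining term I would compute by evaluating the one-form $\ip{e^\uparrow}\dr\widetilde{\phi_\chi}$, alongside $q_E^*(\phi_\chi(e))$, against the linear and vertical vector fields that generate $\mx(E)$ over $C^\infty(E)$: both vanish on a vertical lift $e'^\uparrow$ (using $\widetilde{\phi_\chi}(e'^\uparrow)=0$ and $[e^\uparrow,e'^\uparrow]=0$), and on a linear field $\widehat{D'}$ over $X'$ both give $q_E^*\langle\phi_\chi(e),X'\rangle$, the latter because $\widetilde{\phi_\chi}(\widehat{D'})=\ell_\mu$ with $\mu\in\Gamma(E^*)$ determined by $\langle\mu,e'\rangle=\langle\phi_\chi(e'),X'\rangle$ and $e^\uparrow(\ell_\mu)=q_E^*\langle\mu,e\rangle$. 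Hence $\ip{e^\uparrow}\dr\widetilde{\phi_\chi}=q_E^*(\phi_\chi(e))$, the cotangent part is $q_E^*(\ldr{X}\theta+\phi_\chi(e))$, and so $\lb\chi,\tau^\uparrow\rb=\big((d_\chi e)^\uparrow,q_E^*(\ldr{X}\theta+\phi_\chi(e))\big)$, which is exactly $(D_\chi\tau)^\uparrow$ since $D_\chi(e,\theta)=(d_\chi e,\phi_\chi(e)+\ldr{X}\theta)$.

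The main obstacle is the term $\ip{e^\uparrow}\dr\widetilde{\phi_\chi}$: unlike the pullback one-form $q_E^*\theta$, the core-linear form $\widetilde{\phi_\chi}$ is not closed, and its behaviour becomes transparent only once one tests $\dr\widetilde{\phi_\chi}$ against the linear and vertical generators and uses that $\widetilde{\phi_\chi}$ evaluated on a linear field $\widehat{D'}$ is the linear function $\ell_\mu$ above. Everything else is bookkeeping with the vertical-lift and linear-vector-field calculus recalled in Section~\ref{cotangent_ex}.
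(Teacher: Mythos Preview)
Your proof is correct and follows essentially the same route as the paper's. The only cosmetic difference is in the organisation of the cotangent computation: the paper bundles $\theta_\chi=\dr\ell_\varepsilon-\widetilde{\phi_\chi}$ and computes $\ip{e^\uparrow}\dr\theta_\chi$ via Cartan's formula $\ip{e^\uparrow}\dr\theta_\chi=\ldr{e^\uparrow}\theta_\chi-\dr\langle\theta_\chi,e^\uparrow\rangle$ (using a preparatory lemma for $\ldr{e^\uparrow}\theta_\chi$), whereas you first discard the exact part $\dr\ell_\varepsilon$ and then evaluate $\ip{e^\uparrow}\dr\widetilde{\phi_\chi}$ directly against the linear and vertical generators; both lead to the same term $q_E^*(\phi_\chi(e))$.
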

We prove the first identity in Appendix \ref{proofs}.  The second and
third identities follow immediately from (\ref{equ:chi}).

We now state our first main theorem.
\begin{theorem}\label{prop_2_chi}
  Choose two linear sections $\chi_1,\chi_2\in\Gamma^l_E(TE\oplus
  T^*E)$, over pairs $(X_1,\varepsilon_1),
  (X_2,\varepsilon_2)\in\Gamma(TM\oplus E^*)$.
Then we have 
\begin{equation}\label{prop_2_chi_eq}
\begin{split}
\lb \chi_1, \chi_2\rb&=\left(\reallywidehat{[d_{\chi_1},d_{\chi_2}]},
  \dr\ell_{\pr_{E^*}D_{\chi_1}^*(X_2,\varepsilon_2)}-\reallywidetilde{\pr_{T^*M}\circ[D_{\chi_1}, D_{\chi_2}]\circ\iota_E}\right)\\
&=\chi_{\pr_{E^*}D_{\chi_1}^*(X_2,\varepsilon_2), [D_{\chi_1}, D_{\chi_2}]}
\end{split}
\end{equation}
and 
$\langle \chi_1, \chi_2\rangle=\ell_{\pr_{E^*}(D_{\chi_1}^*(X_2,\varepsilon_2)+D_{\chi_2}^*(X_1,\varepsilon_1))}$.
\end{theorem}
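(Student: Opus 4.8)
The plan is to exploit that the Courant--Dorfman bracket of two linear sections is again linear (as recalled in Section \ref{sec:fatb}), so that $\lb\chi_1,\chi_2\rb\in\Gamma_E^l(TE\oplus T^*E)$ is completely determined, via Theorems \ref{linear_are_der_thm} and \ref{prop_1_chi}, by two pieces of data: its base section in $\Gamma(TM\oplus E^*)$ and its associated derivation $D_{\lb\chi_1,\chi_2\rb}$ of $E\oplus T^*M$. I would therefore compute these two invariants separately and reassemble the section using the identity $\chi=\chi_{\varepsilon,D}=(\widehat{d_\chi},\dr\ell_\varepsilon-\widetilde{\phi_\chi})$.

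For the derivation, I would evaluate $\lb\lb\chi_1,\chi_2\rb,\tau^\uparrow\rb$ for an arbitrary $\tau\in\Gamma(E\oplus T^*M)$. By the Jacobi identity in Leibniz form (axiom (1)) this equals $\lb\chi_1,\lb\chi_2,\tau^\uparrow\rb\rb-\lb\chi_2,\lb\chi_1,\tau^\uparrow\rb\rb$, and applying the first identity of Theorem \ref{prop_1_chi} twice turns each inner bracket $\lb\chi_i,\tau^\uparrow\rb$ into $(D_{\chi_i}\tau)^\uparrow$. Hence $\lb\lb\chi_1,\chi_2\rb,\tau^\uparrow\rb=([D_{\chi_1},D_{\chi_2}]\tau)^\uparrow$; comparing with $\lb\lb\chi_1,\chi_2\rb,\tau^\uparrow\rb=(D_{\lb\chi_1,\chi_2\rb}\tau)^\uparrow$ and using injectivity of the vertical lift yields $D_{\lb\chi_1,\chi_2\rb}=[D_{\chi_1},D_{\chi_2}]$. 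In particular $d_{\lb\chi_1,\chi_2\rb}=\pr_E\circ[D_{\chi_1},D_{\chi_2}]\circ\iota_E=[d_{\chi_1},d_{\chi_2}]$ and $\phi_{\lb\chi_1,\chi_2\rb}=\pr_{T^*M}\circ[D_{\chi_1},D_{\chi_2}]\circ\iota_E$, which already produces the first and third slots of \eqref{prop_2_chi_eq}.

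For the base section I would use the second Courant axiom with $e_3=\tau^\uparrow$: from $\widetilde\rho(\chi_1)\langle\chi_2,\tau^\uparrow\rangle=\langle\lb\chi_1,\chi_2\rb,\tau^\uparrow\rangle+\langle\chi_2,\lb\chi_1,\tau^\uparrow\rb\rangle$ together with the pairing identity of Theorem \ref{prop_1_chi} (so $\langle\chi_2,\tau^\uparrow\rangle=q_E^*\langle(X_2,\varepsilon_2),\tau\rangle$) and $\widetilde\rho(\chi_1)=\widehat{d_{\chi_1}}$ acting on pullbacks as $X_1$, I obtain $\langle\lb\chi_1,\chi_2\rb,\tau^\uparrow\rangle=q_E^*\bigl(X_1\langle(X_2,\varepsilon_2),\tau\rangle-\langle(X_2,\varepsilon_2),D_{\chi_1}\tau\rangle\bigr)$. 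The bracketed expression is exactly $\langle D_{\chi_1}^*(X_2,\varepsilon_2),\tau\rangle$ by definition of the dual derivation $D_{\chi_1}^*$ on $E^*\oplus TM$, and comparing again with Theorem \ref{prop_1_chi} applied to $\lb\chi_1,\chi_2\rb$ identifies its base section as $D_{\chi_1}^*(X_2,\varepsilon_2)$, whose $E^*$-part $\pr_{E^*}D_{\chi_1}^*(X_2,\varepsilon_2)$ is the remaining slot of \eqref{prop_2_chi_eq}. Uniqueness in Theorem \ref{linear_are_der_thm} then gives the two displayed expressions. I expect the main obstacle to be the bookkeeping around $D_{\chi_1}^*$: one must keep the off-diagonal contribution $\phi_{\chi_1}^*X_2$ (coming from $\phi_{\chi_1}\colon E\to T^*M$) separate from the diagonal term $d_{\chi_1}^*\varepsilon_2$, and check that $\pr_{TM}D_{\chi_1}^*(X_2,\varepsilon_2)=[X_1,X_2]$ so that the anchor is consistent with axiom (4).

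Finally, for the pairing I would apply the symmetry axiom (3), $\lb\chi_1,\chi_2\rb+\lb\chi_2,\chi_1\rb=\mathcal D\langle\chi_1,\chi_2\rangle=(0,\dr\langle\chi_1,\chi_2\rangle)$ for the standard Courant algebroid over $E$. By the formula just obtained, the $TE$-components $\widehat{[d_{\chi_1},d_{\chi_2}]}$ and the core-linear corrections $\widetilde{\phi_\bullet}$ cancel in the sum (since both are odd under swapping $\chi_1,\chi_2$), leaving $\dr\langle\chi_1,\chi_2\rangle=\dr\ell_{\pr_{E^*}(D_{\chi_1}^*(X_2,\varepsilon_2)+D_{\chi_2}^*(X_1,\varepsilon_1))}$; as both functions are linear on the fibres of $E$ they coincide, giving the stated pairing. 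Alternatively one can verify this directly from $\chi_i=(\widehat{d_{\chi_i}},\dr\ell_{\varepsilon_i}-\widetilde{\phi_{\chi_i}})$ using \eqref{ableitungen} and $\langle\widetilde{\phi_{\chi_1}},\widehat{d_{\chi_2}}\rangle=\ell_{\phi_{\chi_1}^*X_2}$, which reproduces $\pr_{E^*}D_{\chi_1}^*(X_2,\varepsilon_2)=d_{\chi_1}^*\varepsilon_2-\phi_{\chi_1}^*X_2$.
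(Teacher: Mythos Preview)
Your argument is correct, but it proceeds quite differently from the paper's proof. The paper works by brute force: it writes $\chi_i=(\widehat{d_{\chi_i}},\theta_{\chi_i})$ with $\theta_{\chi_i}=\dr\ell_{\varepsilon_i}-\widetilde{\phi_{\chi_i}}$, expands the Courant--Dorfman bracket as $\bigl([\widehat{d_{\chi_1}},\widehat{d_{\chi_2}}],\ldr{\widehat{d_{\chi_1}}}\theta_{\chi_2}-\ip{\widehat{d_{\chi_2}}}\dr\theta_{\chi_1}\bigr)$, and then evaluates each piece via a dedicated technical lemma (Lemma~\ref{technical_one}) that computes $\langle\theta_\chi,e^\uparrow\rangle$, $\langle\theta_\chi,\widehat D\rangle$, $\ldr{e^\uparrow}\theta_\chi$, and $\ldr{\widehat{d_{\chi'}}}\theta_\chi$ directly. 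The $T^*E$-component is then reduced by hand to the claimed form, and the pairing identity is left to the reader.

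Your route is more structural: you never expand the Lie derivative or interior product, but instead use that $\lb\chi_1,\chi_2\rb$ is again linear and is therefore determined (Theorem~\ref{linear_are_der_thm}) by its base section and its derivation. You read off the derivation from the Jacobi identity and Theorem~\ref{prop_1_chi} applied to core sections, and the base section from axiom~(2) again paired against core sections. This bypasses Lemma~\ref{technical_one} entirely and makes the appearance of $[D_{\chi_1},D_{\chi_2}]$ and $D_{\chi_1}^*(X_2,\varepsilon_2)$ conceptually transparent. The trade-off is that the paper's computation is self-contained from the raw definitions, whereas yours leans on the a~priori linearity of $\lb\chi_1,\chi_2\rb$ (stated in \S\ref{sec:fatb}) and on Theorem~\ref{prop_1_chi}; since both are established independently there is no circularity. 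Your treatment of the pairing via axiom~(3) (or the direct alternative you sketch) is in fact more complete than the paper's, which omits that verification.
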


The Theorem is again proved in Appendix \ref{proofs} and gives us an
expression for the induced $E^*$-valued Courant bracket on
$\operatorname{Der}(E^*)\oplus J^1(E^*)$:
\begin{corollary} Choose $d_{1},d_{2}\in
  \Gamma(\operatorname{Der}(E^*))$ with symbols $X_1, X_2\in\mx(M)$ and choose $\mu_1,\mu_2 \in \Gamma(J^1(E^*))$
  corresponding as in \eqref{can_iso_jet} to
  $(\varepsilon_1,\phi_1),(\varepsilon_2,\phi_2)\in\Gamma(E^*)\oplus
  \Gamma(T^*M\otimes E^*)$. Then
\begin{equation} 
\lb (d_{1},\mu_1),(d_{2},\mu_2)\rb=\left([d_{1},d_{2}], \ldr{d_{1}}\mu_2 - \ldr{d_{2}} \mu_1 + j^1\left<d_{2},\mu_1\right>\right), 
\end{equation} 
where the $\operatorname{Der}(E^*)$-Lie derivative on $J^1(E^*)$ is
defined in equation (19) of \cite{ChLi10}: 
\begin{equation*}
\ldr{d} \mu= \ldr{d} (\varepsilon,\phi)= (d\varepsilon, (\ldr{X} \circ \phi^* -\phi^*\circ d^*)^*)
\end{equation*}
where $d$ is a derivation of $E^*$ with symbol $X\in\mx(M)$ and $\mu=(\varepsilon,\phi)\in\Gamma(J^1E^*)\simeq\Gamma(E^*\oplus\operatorname{Hom}(TM,E^*))$.
  Thus, our
theorem proves that the $E^*$-valued Courant algebroid structure on
$\operatorname{Der}(E^*)\oplus J^1(E^*)$ given in \cite{ChLi10} is
precisely induced from $TE\oplus T^*E$ via the isomorphism $\Psi$ from
\ref{sec:fatb}.
\end{corollary}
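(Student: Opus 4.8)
The final statement is the Corollary, which computes the induced $E^*$-valued Courant bracket on $\operatorname{Der}(E^*)\oplus J^1(E^*)$ in explicit jet-bundle coordinates, and identifies it with the omni-Lie algebroid bracket of \cite{ChLi10}. Let me think about how I'd prove this.

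The strategy is a translation exercise: Theorem \ref{prop_2_chi} already computes $\lb\chi_1,\chi_2\rb$ for linear sections in terms of derivations $D_{\chi_i}$ of the core $E\oplus T^*M$. The Corollary must be deduced by transporting that formula through the isomorphism $\Theta\colon\operatorname{Der}(E^*)\oplus J^1(E^*)\to\widehat{E}$ (equivalently $\Psi$). So the whole task reduces to bookkeeping between three descriptions of the same linear section: (i) as $\chi$ with its derivation $D_\chi$ of $E\oplus T^*M$, (ii) as a pair $(d_\chi,\varepsilon,\phi_\chi)$ via the decomposition $\chi=(\widehat{d_\chi},\dr\ell_\varepsilon-\widetilde{\phi_\chi})$, and (iii) as $(d,\mu)\in\operatorname{Der}(E^*)\oplus J^1(E^*)$ with $\mu=(\varepsilon,\phi)$. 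The Remark after Theorem \ref{linear_are_der_thm} already records the dictionary $(iii)\leftrightarrow(ii)$, so I mainly need to unwind $(i)\to(iii)$.

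**The plan, step by step.**

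First I would start from Theorem \ref{prop_2_chi}, which says $\lb\chi_1,\chi_2\rb=\chi_{\pr_{E^*}D_{\chi_1}^*(X_2,\varepsilon_2),\,[D_{\chi_1},D_{\chi_2}]}$, and read off its three constituents in the $(d,\varepsilon,\phi)$-picture: the derivation part is $[D_{\chi_1},D_{\chi_2}]$, whose induced derivation of $E^*$ should be $[d_1,d_2]$ (using that the derivation of $E^*$ dual to a derivation of $E$ respects commutators, and that $D_\chi$ acts on the $E$-summand by $d_\chi$); the $E^*$-section part is $\pr_{E^*}D_{\chi_1}^*(X_2,\varepsilon_2)$; and the $\operatorname{Hom}(TM,E^*)$-part is $\pr_{T^*M}\circ[D_{\chi_1},D_{\chi_2}]\circ\iota_E$, dualised. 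Next I would match each of these against the three components of the claimed omni-Lie bracket $([d_1,d_2],\,\ldr{d_1}\mu_2-\ldr{d_2}\mu_1+j^1\langle d_2,\mu_1\rangle)$, using the definition of $\ldr{d}$ on $J^1(E^*)$ quoted from equation (19) of \cite{ChLi10} and the pairing formula $\langle d_2,\mu_1\rangle=d_2(\varepsilon_1)+\phi_1(X_2)$ established at the end of Section \ref{sec:fatb}. The identity $\ldr{d}\mu=(d\varepsilon,(\ldr{X}\circ\phi^*-\phi^*\circ d^*)^*)$ is precisely what should emerge from $\pr_{T^*M}\circ[D_{\chi_1},D_{\chi_2}]\circ\iota_E$ after dualising, so the commutator of derivations on the core is the engine producing the Lie-derivative terms.

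The crux is verifying the first-component/jet bookkeeping: I must check that $\pr_{E^*}D_{\chi_1}^*(X_2,\varepsilon_2)$, together with the dualised $\phi$-part coming from $[D_{\chi_1},D_{\chi_2}]$, assemble into the single jet-bundle expression $\ldr{d_1}\mu_2-\ldr{d_2}\mu_1+j^1\langle d_2,\mu_1\rangle$. The antisymmetry in $1,2$ of the Lie-derivative terms versus the manifest asymmetry of $\pr_{E^*}D_{\chi_1}^*(X_2,\varepsilon_2)$ is exactly where the correction term $j^1\langle d_2,\mu_1\rangle$ is forced, so I would expect the main obstacle to be the careful dualisation across the pairing $\langle\chi,\tau^\uparrow\rangle=q_E^*\langle(X,\varepsilon),\tau\rangle$ from Theorem \ref{prop_1_chi}: one has to track how $D_{\chi_1}^*$ acts on the $E^*$-component and reconcile the sign/splitting conventions of \eqref{can_iso_jet} with the core decomposition $E\oplus T^*M$. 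Once that single identity of $J^1(E^*)$-valued expressions is confirmed componentwise via \eqref{can_iso_jet}, the Corollary follows, and the final sentence — that the \cite{ChLi10} structure is induced from $TE\oplus T^*E$ through $\Psi$ — is an immediate consequence, since $\Psi$ (hence $\Theta$) was shown in Section \ref{sec:fatb} to intertwine the two pairings and the bracket is transported by construction.
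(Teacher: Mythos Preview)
Your proposal is correct and follows essentially the same route as the paper's own proof: both start from Theorem \ref{prop_2_chi}, transport the data $\chi_{\pr_{E^*}D_{\chi_1}^*(X_2,\varepsilon_2),\,[D_{\chi_1},D_{\chi_2}]}$ through the dictionary $(d_i,\varepsilon_i,\phi_i)\leftrightarrow\chi_{\varepsilon_i,D_i}$ with $D_i(e,0)=(d_i^*e,-\phi_i^*(e))$, compute $\pr_{E^*}D_{\chi_1}^*(X_2,\varepsilon_2)=d_1\varepsilon_2+\phi_1(X_2)$ and $\pr_{T^*M}[D_{\chi_1},D_{\chi_2}]\circ\iota_E$ explicitly, and then regroup the resulting $(\operatorname{Der}(E^*),E^*,\operatorname{Hom}(TM,E^*))$-triple into the jet form $\ldr{d_1}\mu_2-\ldr{d_2}\mu_1+j^1\langle d_2,\mu_1\rangle$ using the pairing $\langle d_2,\mu_1\rangle=d_2\varepsilon_1+\phi_1(X_2)$. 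Your diagnosis that the $j^1\langle d_2,\mu_1\rangle$ term is what absorbs the asymmetry of $\pr_{E^*}D_{\chi_1}^*(X_2,\varepsilon_2)$ is exactly the point of the final regrouping in the paper's computation.
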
 

\begin{proof} 
  With the correspondence between $\Gamma(\widehat{E})$ and
  $\Gamma(\operatorname{Der}(E^*)\oplus
  \Gamma(J^1(E^*))=\Gamma(\operatorname{Der}(E^*)\oplus
  E^*\oplus \operatorname{Hom}(TM,E^*))$,
  $(d_i,(\varepsilon_i,\phi_i))$ corresponds to
  $\chi_i=\chi_{\varepsilon_i,D_{i}}$ with
  $D_i(e,0)=(d_i(e),-\phi_i^*(e))$. Then we have $\pr_{E^*}
  D_{\chi_1}^*(X_2,\varepsilon_2)=d_{1}(\varepsilon_2)+\phi_1(X_2)$
  as well as
\begin{equation*} 
\pr_{T^*M}([D_{\chi_1},D_{\chi_2}](e,0)) = -\phi_1^*(d^*_{2} e) + \phi_2^*(d^*_{1} e) - \ldr{X_1}(\phi^*_2(e)) +\ldr{X_2}(\phi^*_1(e)).
\end{equation*}
By the considerations in Section \ref{sec:fatb}, we have further
$\left<d_{2},\mu_1\right> = d_{2}\varepsilon_1 + \phi_1(X_2)$.  We get
using the isomorphisms $\Gamma(J^1E^*)\simeq
\Gamma(E^*\oplus\operatorname{Hom}(TM,E^*))$ and $\Gamma_E^l(TE\oplus
T^*E)\simeq\Gamma(J^1E^*\oplus \operatorname{Der}(E^*))$:
\begin{equation*}
\begin{split}
\lb (d_1,\mu_1), (&d_2,\mu_2)\rb=\lb(d_1,\varepsilon_1,\phi_1),(d_2,\varepsilon_2,\phi_2)\rb
=\left\lb \left(\widehat{d_1},\dr\ell_{\varepsilon_1}+\widetilde{\phi_1}\right),\left(\widehat{d_2},\dr\ell_{\varepsilon_2}+\widetilde{\phi_2}\right)\right\rb\\
&=\left\lb \chi_{\epsilon_1,D_1},\chi_{\epsilon_2,D_2}\right\rb=\chi_{(d_{1}(\varepsilon_2)+\phi_1(X_2)), [D_1,D_2]}\\
&=([d_1,d_2], d_{1}(\varepsilon_2)+\phi_1(X_2), \ldr{d_1}\phi_2-\ldr{d_2}\phi_1)\\
&=([d_1,d_2],0,0)+(0,d_2\varepsilon_1+\phi_1(X_2),0)+(0, \ldr{d_1}(\varepsilon_2,\phi_2)-\ldr{d_2}(\varepsilon_1,\phi_1))\\
&=([d_1,d_2],j^1\langle d_2,\mu_1\rangle+\ldr{d_1}\mu_2-\ldr{d_2}\mu_1).
\end{split}
\end{equation*}
\end{proof} 

Note that the derivation
$D_\chi$ defines as follows a derivation of
$\operatorname{Hom}(E,E\oplus T^*M)$:
$(D_\chi\varphi)(e)=D_\chi(\varphi(e))-\varphi(d_\chi(e)) $ for
all $e\in\Gamma(E)$.
\begin{corollary}\label{new_lemma}
In the situation of the preceding theorem,
the Courant-Dorfman bracket satisfies
 $\left\lb \chi, \widetilde\varphi\right\rb=\widetilde{D_\chi\varphi}$ for
  $\varphi\in\Gamma(\operatorname{Hom}(E,E\oplus T^*M))$. 
\end{corollary}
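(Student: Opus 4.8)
The plan is to reduce the core-linear section $\widetilde\varphi$ to vertical (core) sections, for which the bracket with $\chi$ has already been computed in Theorem \ref{prop_1_chi}, and then to read off the result from the Leibniz rule. Since the core-linear construction $\psi\mapsto\widetilde\psi$ and the Courant-Dorfman bracket are additive in their relevant arguments and the statement is local, I would reduce at once to a single decomposable tensor $\varphi=\varepsilon\otimes\tau$, with $\varepsilon\in\Gamma(E^*)$ and $\tau\in\Gamma(E\oplus T^*M)$. The one structural ingredient is the standard description of core-linear sections in terms of core sections: writing $\ell_\varepsilon\in C^\infty(E)$ for the linear function on the base $E$, one has
\[
\widetilde{\varepsilon\otimes\tau}=\ell_\varepsilon\cdot\tau^\uparrow .
\]

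Granting this, I would expand the bracket with the Leibniz identity for the Courant-Dorfman bracket in its second slot, $\lb\chi,f\sigma\rb=f\lb\chi,\sigma\rb+(\pr_{TE}(\chi)f)\,\sigma$, applied to $f=\ell_\varepsilon$ and $\sigma=\tau^\uparrow$. By Theorem \ref{prop_1_chi} the bracket on a core section is $\lb\chi,\tau^\uparrow\rb=(D_\chi\tau)^\uparrow$ and the anchor is $\pr_{TE}(\chi)=\widehat{d_\chi}$; feeding in \eqref{ableitungen}, which gives $\widehat{d_\chi}\,\ell_\varepsilon=\ell_{d_\chi^*\varepsilon}$ for the dual derivation $d_\chi^*$ on $E^*$, I obtain
\[
\lb\chi,\widetilde\varphi\rb=\ell_\varepsilon\,(D_\chi\tau)^\uparrow+\ell_{d_\chi^*\varepsilon}\,\tau^\uparrow .
\]

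It then remains to recognise the right-hand side as $\widetilde{D_\chi\varphi}$. I would compute $D_\chi\varphi$ directly from its defining formula $(D_\chi\varphi)(e)=D_\chi(\varphi(e))-\varphi(d_\chi e)$: since $\varphi(e)=\langle\varepsilon,e\rangle\tau$ and $D_\chi$ is a derivation over $X$, the product rule together with the dual-derivation identity $X\langle\varepsilon,e\rangle-\langle\varepsilon,d_\chi e\rangle=\langle d_\chi^*\varepsilon,e\rangle$ gives $(D_\chi\varphi)(e)=\langle d_\chi^*\varepsilon,e\rangle\tau+\langle\varepsilon,e\rangle D_\chi\tau$, that is,
\[
D_\chi\varphi=(d_\chi^*\varepsilon)\otimes\tau+\varepsilon\otimes(D_\chi\tau).
\]
Applying $\psi\mapsto\widetilde\psi$ and the relation $\widetilde{\varepsilon\otimes\tau}=\ell_\varepsilon\tau^\uparrow$ once more reproduces exactly the previous display, which proves $\lb\chi,\widetilde\varphi\rb=\widetilde{D_\chi\varphi}$.

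The only genuinely delicate point I expect is the bookkeeping identity $\widetilde{\varepsilon\otimes\tau}=\ell_\varepsilon\tau^\uparrow$ relating core-linear and core sections of $TE\oplus T^*E$; it has to be extracted from the definitions of the two scalar multiplications and of the core inclusion, using that scaling in $D\to E$ distributes over the addition $+_{TM\oplus E^*}$ and restricts to the core scalar multiplication. Everything else is a routine combination of the Leibniz rule, Theorem \ref{prop_1_chi} and \eqref{ableitungen}; the mechanism that makes the two displays coincide is precisely the compatibility between the derivation $d_\chi$ of $E$ and the full derivation $D_\chi$ of $E\oplus T^*M$ that is encoded in the definition of $D_\chi\varphi$.
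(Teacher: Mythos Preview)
Your proof is correct and takes a genuinely different route from the paper's own argument. The paper views $\widetilde\varphi$ as a linear section with zero side projection, identifies it with the pair $(\phi_1^*,\iota\phi_2)\in\Gamma(\operatorname{Der}(E^*)\oplus J^1(E^*))$ in the Omni-Lie algebroid picture, and then computes $\lb\chi,\widetilde\varphi\rb$ by specialising the full bracket formula of Theorem~\ref{prop_2_chi} (via the corollary on $\operatorname{Der}(E^*)\oplus J^1(E^*)$). You instead decompose $\varphi$ into simple tensors, use the standard identity $\widetilde{\varepsilon\otimes\tau}=\ell_\varepsilon\,\tau^\uparrow$ for core-linear sections, and reduce to Theorem~\ref{prop_1_chi} via the Leibniz rule. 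Your approach is more elementary: it needs only the linear-core bracket and the anchor formula, not the full linear-linear bracket of Theorem~\ref{prop_2_chi} or the Omni-Lie identification. The paper's approach, on the other hand, treats $\widetilde\varphi$ uniformly as a (degenerate) linear section and so fits the corollary seamlessly into the structural framework already set up. The identity $\widetilde{\varepsilon\otimes\tau}=\ell_\varepsilon\,\tau^\uparrow$ that you flag as the delicate point is indeed the only nontrivial bookkeeping step, and follows from the interchange law $\lambda\cdot_E(d_1+_{TM\oplus E^*}d_2)=(\lambda\cdot_E d_1)+_{TM\oplus E^*}(\lambda\cdot_E d_2)$ together with the fact that both scalar multiplications agree on the core.
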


\begin{proof}The section $\varphi \in \Gamma(E^*\otimes(E\oplus T^*M))$
  can be written as $\varphi=(\phi_1,\phi_2^*)$, with $\phi_1\in
  \Gamma(E^*\otimes E)$ and $\phi_2 \in \Gamma(\operatorname{Hom}(TM,E^*))$. Furthermore, $\phi$ defines a section of
  $\operatorname{Der}(E^*)\oplus J^1(E^*)$: $\phi_1^*$ is a derivation
  of $E^*$
  with symbol $0\in\mx(M)$ and $\phi_2\simeq\iota\phi_2$ is a section of
  $J^1E^*$. Therefore $\widetilde{\phi}$ is simply the
  corresponding core-linear section under the correspondence outlined
  above. Choose  $\chi=(d,\mu)$ with $d$ a derivation of $E^*$ over $X\in\mx(M)$ and $\mu=j^1\varepsilon+\iota\psi\in\Gamma(J^1E^*)$. Then
the results above yield
\[ \lb (d,\mu), (\phi_1,\phi_2)\rb = \lb (d,\varepsilon,\psi), (\phi_1^*,0,\phi_2)\rb
=([d,\phi_1^*],(\ldr{X} \circ \phi_2^* - \phi_2^* \circ d^*)^*+\phi_1^*\circ\psi), \]  
which is easily seen to be $D_{\chi}\varphi$.
\end{proof}

\subsection{Linear closed $3$-forms}
Let $E$ be as usual a vector bundle over $M$. A $k$-form $H$ on $E$ is
\textbf{linear} if the induced vector bundle morphism $H^\sharp\colon
\oplus^{k-1}TE\to T^*E$ over the identity on $E$ is \emph{also} a
vector bundle morphism over a map $h\colon \oplus^{k-1}TM\to E^*$ on
the other side of the double vector bundles \cite{BuCa12}.

According to Proposition 1 in \cite{BuCa12}, a linear $k$-form $H\in\Omega^k(E)$
can be written 
\[ H=\dr\Lambda_\mu+\Lambda_\omega
\]
with $\mu\in\Omega^{k-1}(M,E^*)$ and $\omega\in\Omega^k(M,E^*)$.
Here, given $\omega\in\Omega^k(M,E^*)$, the $k$-form
$\Lambda_\omega\in\Omega^k(E)$ is given by
\[\Lambda_\omega(e_m)=(T_{e_m}q_E)^*(\langle \omega, e\rangle(m)),
\]
where $\langle \omega, e\rangle\in\Omega^k(M)$ is the obtained $k$-form on $M$.
Note that in the equation for $H$, we have $\mu=(-1)^{k-1}h$.

\begin{example}
  For instance, we have seen in \S\ref{cotangent_ex} that for
  $\varepsilon\in\Gamma(E^*)$, the 1-form
  $\dr\ell_\varepsilon\in\Omega^1(E)$ is linear. Since it projects to
  $\varepsilon\in\Gamma(E^*)$, we know that any linear 1-form on $E$
  can be written $\dr\ell_\varepsilon+\widetilde\phi$ for
  $\varepsilon\in\Gamma(E^*)=\Omega^0(M,E^*)$ and
  $\phi\in\Gamma(\Hom(E,T^*M))=\Omega^1(M,E^*)$.  An easy computation
  shows $\Lambda_\varepsilon=\ell_\varepsilon\in \Omega^0(E)=C^\infty(E)$ and
  $\Lambda_\phi=\widetilde\phi\in\Omega^1(E)$.
\end{example}

\begin{proposition} \label{closed_linear} Consider a linear $k$-form
  $H=\dr\Lambda_\mu+\Lambda_\omega$, with $\mu\in\Omega^{k-1}(M,E^*)$
  and $\omega\in\Omega^k(M,E^*)$.  Then $H$ is closed, $\dr H=0$, if and
  only if $\omega=0$.
\end{proposition}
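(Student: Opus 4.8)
The plan is to directly compute $\dr H$ using the decomposition $H = \dr\Lambda_\mu + \Lambda_\omega$. Since $\dr(\dr\Lambda_\mu) = 0$ automatically, we have $\dr H = \dr\Lambda_\omega$, so the proposition reduces to showing that $\dr\Lambda_\omega = 0$ if and only if $\omega = 0$. The ``if'' direction is trivial, so the real content is the ``only if'' direction: a linear exact-type form $\dr\Lambda_\omega$ vanishes only when $\omega$ itself vanishes. First I would establish a clean formula for $\dr\Lambda_\omega$ in terms of data on $M$. Recall from the definition that $\Lambda_\omega(e_m) = (T_{e_m}q_E)^*(\langle \omega, e\rangle(m))$, i.e.\ $\Lambda_\omega$ is the pullback along $q_E$ of the $E^*$-valued form $\omega$ paired against the tautological ``point'' $e$. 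The key observation I expect to exploit is that $\dr\Lambda_\omega$ is again a linear form, and that its two components under the Bursztyn--Cavalcanti decomposition (Proposition 1 of \cite{BuCa12}) can be read off: one expects $\dr\Lambda_\omega = \dr\Lambda_{\omega} + \Lambda_{\dr^\nabla\text{-type term}}$, but since $\Lambda_\omega$ already has the form $\Lambda$ of a form (not $\dr\Lambda$), applying $\dr$ should produce a term of type $\dr\Lambda$ with the \emph{same} $\omega$ plus possibly a $\Lambda$-type term built from $\omega$.

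The cleanest route is to test $\dr\Lambda_\omega$ against vertical and core directions. Because the statement is an ``if and only if'' with a nondegeneracy flavour, I would evaluate $\dr\Lambda_\omega$ on tuples of vector fields on $E$ chosen from the two privileged classes: linear vector fields $\widehat{D}$ (covering vector fields on $M$) and vertical/core lifts $e^\uparrow$ (covering $0$ on $M$). Using the Cartan formula for $\dr$ on a $(k+1)$-tuple and the known brackets $[\widehat{D_1},\widehat{D_2}] = \reallywidehat{[D_1,D_2]}$, $[\widehat{D}, e^\uparrow] = (De)^\uparrow$, and $[e_1^\uparrow, e_2^\uparrow]=0$, together with the evaluations $\Lambda_\omega(e_1^\uparrow,\dots) $ (which vanish as soon as one argument is vertical, since $\Lambda_\omega$ is a pullback of a form along $q_E$ and hence annihilates vertical vectors), I would reduce $\dr\Lambda_\omega$ evaluated on a single vertical direction $e_0^\uparrow$ together with $k$ linear directions to a single surviving term. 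Concretely, the only terms in the Cartan expansion that survive are those where the vertical argument is ``used'' by a Lie-derivative or bracket slot; all pure-evaluation terms $\widehat{D}(\Lambda_\omega(\dots, e_0^\uparrow,\dots))$ vanish because $\Lambda_\omega$ kills the vertical entry, and the bracket terms $[\widehat{D}, e_0^\uparrow]=(De_0)^\uparrow$ keep a vertical entry and hence also give zero. What remains is a term proportional to $\omega$ paired with $e_0$ at the point, giving an expression of the form $\langle \omega(X_1,\dots,X_k), e_0\rangle$ (up to sign) evaluated along the base.

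The heart of the argument is therefore the computation that
\[
(\dr\Lambda_\omega)\bigl(e_0^\uparrow, \widehat{D_1},\dots,\widehat{D_k}\bigr)(e_m)
= \pm\,\bigl\langle \omega\bigl(X_1,\dots,X_k\bigr)(m),\, e_0(m)\bigr\rangle,
\]
where $X_i$ is the symbol of $D_i$; this is constant in the fibre direction and depends linearly and nondegenerately on $e_0$ and on $\omega$. If $\dr H = \dr\Lambda_\omega = 0$, then this pairing vanishes for all choices of $e_0\in\Gamma(E)$ and all vector fields $X_1,\dots,X_k\in\mx(M)$ (realised as symbols of derivations), which by nondegeneracy of the pairing $E\times_M E^*\to\R$ and the fact that a $k$-form is determined by its values on $k$-tuples of tangent vectors forces $\omega = 0$. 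The step I expect to be the main obstacle is the careful bookkeeping in the Cartan formula: verifying that \emph{every} term other than the single surviving one really vanishes, which requires the two facts (i) $\Lambda_\omega$ annihilates any tuple containing a vertical vector, and (ii) all the relevant Lie brackets of linear and core vector fields either stay within the linear class or produce a core vector field, never ``converting'' the vertical entry into something that $\Lambda_\omega$ can detect. Getting the signs and the combinatorial count right is the delicate part, but the structural reason for the result is simply that $\Lambda_\omega$ records $\omega$ precisely through its fibre-derivative, which $\dr$ extracts.
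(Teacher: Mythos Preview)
Your proposal is correct and follows essentially the same approach as the paper: both reduce to showing $\dr\Lambda_\omega=0\iff\omega=0$, evaluate $\dr\Lambda_\omega$ via the Cartan formula on $k$ linear vector fields $\widehat{D_i}$ together with a single core vector field $e^\uparrow$, observe that every term with a vertical entry inside $\Lambda_\omega$ dies, and identify the sole survivor $(-1)^k e^\uparrow\bigl(\Lambda_\omega(\widehat{D_1},\ldots,\widehat{D_k})\bigr)=(-1)^k q_E^*\langle\omega(X_1,\ldots,X_k),e\rangle$, whose vanishing for all $X_i$ and $e$ forces $\omega=0$. Your bookkeeping sketch omits explicit mention of the $[\widehat{D_i},\widehat{D_j}]$-bracket terms (which also vanish since $e^\uparrow$ remains an argument of $\Lambda_\omega$), but you correctly flagged this as the delicate step and your structural reasoning covers it.
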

\begin{proof}
  $H$ is closed if and only if $\Lambda_\omega$ is closed. It is enough
  to evaluate $\dr\Lambda_\omega$ on linear and core vector fields on
  $E$.  Take $k$ linear vector fields $\widehat{D_i}\in \mx^l(E)$ over
  $X_i\in \mx (M)$, $i=1,\ldots, k$, and one vertical vector field
  $e^\uparrow\in \mx^c(E)$.
 Then
\begin{equation*}
\begin{split}
  (\dr\Lambda_\omega)\left(\widehat{D_1}, \ldots, \widehat{D_{k}},
    e^\uparrow\right) =&\sum_{i=1}^{k}(-1)^{i+1}\widehat{D_i}\left(\Lambda_\omega\left(\widehat{D_1}, \ldots, \hat i
  \ldots, \widehat{D_{k}}, e^\uparrow\right)\right)\\
&+(-1)^{k}e^\uparrow\left(\Lambda_\omega\left(\widehat{D_1}, \ldots,\widehat{D_{k}}\right)\right)\\
&+\sum_{1\leq i\le j\leq k}(-1)^{i+j}\Lambda_\omega\left(\left[\widehat{D_i},\widehat{D_j}\right], \widehat{D_1}, \ldots, \hat i,
  \ldots, \hat j, \ldots,\widehat{D_{k-1}}, e^\uparrow\right)\\
&+\sum_{i=1}^{k}(-1)^{i+k}\Lambda_\omega\left(\left[\widehat{D_i},e^\uparrow\right], \widehat{D_1}, \ldots, \hat i,
  \ldots, \widehat{D_{k}}\right).
\end{split}
\end{equation*}
Since $\left[\widehat{D_i}, e^\uparrow\right]$ is again a vertical vector
field and $\Lambda_\omega$ vanishes on vertical vector fields, the
first, third and fourth terms of this sum all vanish.
The remaining term is 
\[(-1)^{k}e^\uparrow\left(\Lambda_\omega\left(\widehat{D_1}, \ldots, \widehat{D_{k}}\right)\right)
=(-1)^{k}q_E^*\langle \omega(X_1, \ldots, X_{k}), e\rangle.
\]
This is $0$ for all $X_1, \ldots, X_k\in\mx(M)$ and $e\in\Gamma(E)$ if
and only if $\omega=0$.
\end{proof}

In what follows, we will consider closed linear $3$-forms
$H=\dr\Lambda_\mu$ with $\mu\in \Omega^{2}(M, E^*)$ the base map of
$H^\sharp$. Let us compute the inner product of such a $3$-form with
two linear vector fields on $E$.

Recall that any linear vector field can be written $\widehat
D\in\mx^l(E)$ with a derivation $D\colon \Gamma(E) \to \Gamma(E)$ over $X\in\mx(M)$.
The derivation $D$ induces a derivation $D\colon\Omega^1(M,E^*)\to
\Omega^1(M,E^*)$ by
\[(D\omega)(Y)=D^*(\omega(Y))-\omega[X,Y]
\]
for all $\omega\in \Omega^1(M,E^*)$ and $Y\in \mx(M)$. In particular,
given a Dorfman bracket on sections of $TM\oplus E^*$, the linear
vector field $\pr_{TE}\Xi(\nu)$ equals $\widehat{\delta_\nu}$, where
$\nu$ is a section of $TM\oplus E^*$ and $\delta_\nu$ is the derivation
over $\pr_{TM}\nu$. We write $\delta_\nu$ for the induced derivation of 
$\Omega^1(M,E^*)$.
\begin{proposition}
  Choose $\mu\in\Omega^2(M,E^*)$. Let $\widehat{D_1}, \widehat{D_2},
  \widehat{ D}\in \mx^l(E)$ be linear vector fields over $X_1,X_2,
  X\in\mx(M)$ and let $e$ be a section of $E$. Then
\begin{equation}\label{inner1}
  \ip{\widehat{D_2}}\ip{\widehat{D_1}}\dr\Lambda_\mu=
  \dr\ell_{\ip{X_2}\ip{X_1}\mu}+\widetilde{D_{1}(\ip{X_2}\mu)}
-\widetilde{D_{2}(\ip{X_1}\mu)}-\widetilde{\ip{[X_1,X_2]}\mu}
\end{equation} 
and 
\begin{equation}\label{inner2}
\ip{e^\uparrow}\ip{\widehat{D}}\dr\Lambda_\mu=-q_E^*\langle \ip{X}\mu, e\rangle.
\end{equation}
\end{proposition}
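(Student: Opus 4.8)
The plan is to prove both identities by testing the two $1$-forms appearing on each side against a spanning set of vector fields on $E$. As in the proof of Proposition \ref{closed_linear}, the linear vector fields $\widehat{D_3}\in\mx^l(E)$ (over an arbitrary $X_3\in\mx(M)$) together with the core vector fields $e_3^\uparrow\in\mx^c(E)$ span $T_{e_m}E$ at every point $e_m\in E$, so a $1$-form on $E$ is completely determined by its pairings with these two types of field. Both sides of \eqref{inner1} and \eqref{inner2} are manifestly $1$-forms, hence it suffices to contract them once more with $\widehat{D_3}$ and once more with $e^\uparrow$ and to compare. Before starting I would record the facts that drive every step: $\Lambda_\mu$ vanishes as soon as one argument is a core field, and on linear fields $\Lambda_\mu(\widehat{D_i},\widehat{D_j})=\ell_{\mu(X_i,X_j)}$; the bracket $[\widehat{D_i},\widehat{D_j}]=\reallywidehat{[D_i,D_j]}$ is again linear over $[X_i,X_j]$, whereas $[\widehat{D_i},e^\uparrow]$ is core and $[e^\uparrow,e_3^\uparrow]=0$; and, from \eqref{ableitungen}, $\widehat{D}(\ell_\varepsilon)=\ell_{D^*\varepsilon}$ while $e^\uparrow(\ell_\varepsilon)=q_E^*\langle\varepsilon,e\rangle$. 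Finally $\widetilde\phi(\widehat{D_3})=\ell_{\phi(X_3)}$ and $\widetilde\phi(e^\uparrow)=0$ for $\phi\in\Omega^1(M,E^*)$, and $\dr\ell_\varepsilon(\widehat{D_3})=\ell_{D_3^*\varepsilon}$, $\dr\ell_\varepsilon(e^\uparrow)=q_E^*\langle\varepsilon,e\rangle$.

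For \eqref{inner2} I would expand $\ip{e^\uparrow}\ip{\widehat D}\dr\Lambda_\mu$ on a test field using the Koszul formula for $\dr$ of a $2$-form. Against a core field $e_3^\uparrow$ every summand involves a contraction of $\Lambda_\mu$ with at least one core vector (here one also uses $[\widehat D,e^\uparrow]$, $[\widehat D,e_3^\uparrow]$ core and $[e^\uparrow,e_3^\uparrow]=0$), so the result is $0$, matching $-q_E^*\langle\ip X\mu,e\rangle$, which annihilates core fields. Against a linear field $\widehat{D_3}$ only the term $-e^\uparrow\bigl(\Lambda_\mu(\widehat D,\widehat{D_3})\bigr)=-q_E^*\langle\mu(X,X_3),e\rangle$ survives, and this equals $(-q_E^*\langle\ip X\mu,e\rangle)(\widehat{D_3})$ because $\widehat{D_3}$ covers $X_3$. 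This disposes of \eqref{inner2}.

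For \eqref{inner1} the same Koszul expansion of $\dr\Lambda_\mu(\widehat{D_1},\widehat{D_2},\,\cdot\,)$ is the heart of the matter. Contracting further with a core field $e^\uparrow$ kills all but one summand and leaves $q_E^*\langle\mu(X_1,X_2),e\rangle$; on the right-hand side each core-linear term $\widetilde{(\,\cdot\,)}$ vanishes on $e^\uparrow$ while $\dr\ell_{\ip{X_2}\ip{X_1}\mu}(e^\uparrow)=q_E^*\langle\mu(X_1,X_2),e\rangle$, so the two agree. Contracting with a linear field $\widehat{D_3}$ instead produces the six-term expression
\[
\ell_{D_1^*\mu(X_2,X_3)}-\ell_{D_2^*\mu(X_1,X_3)}+\ell_{D_3^*\mu(X_1,X_2)}-\ell_{\mu([X_1,X_2],X_3)}+\ell_{\mu([X_1,X_3],X_2)}-\ell_{\mu([X_2,X_3],X_1)}.
\]

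The remaining task is to show that the right-hand side of \eqref{inner1} reproduces exactly this, using $\widetilde\phi(\widehat{D_3})=\ell_{\phi(X_3)}$, $\dr\ell_\varepsilon(\widehat{D_3})=\ell_{D_3^*\varepsilon}$, and, crucially, the definition $(D_i\omega)(Y)=D_i^*(\omega(Y))-\omega([X_i,Y])$ of the induced derivation on $\Omega^1(M,E^*)$ applied with $\omega=\ip{X_2}\mu$ and $\omega=\ip{X_1}\mu$. The $\dr\ell$ and the $D_i^*\mu(\,\cdot\,,\,\cdot\,)$ contributions line up immediately; the corrections $-\ell_{\mu(X_2,[X_1,X_3])}$ and $+\ell_{\mu(X_1,[X_2,X_3])}$ coming from the $\omega([X_i,Y])$ terms match the last two Koszul terms once one invokes the antisymmetry $\mu([X_i,X_3],X_j)=-\mu(X_j,[X_i,X_3])$. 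I expect this sign/antisymmetry matching, together with keeping track of the identification $\Hom(TM,E^*)\cong\Hom(E,T^*M)$ implicit in the notation $\widetilde{\,\cdot\,}$, to be the only genuine obstacle; everything else is forced by the vanishing of $\Lambda_\mu$ on core vectors and by the closure of $\mx^l(E)$ under brackets.
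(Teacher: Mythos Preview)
Your proposal is correct and follows essentially the same approach as the paper: both proofs test the $1$-forms against core and linear vector fields, using the Koszul formula together with the vanishing of $\Lambda_\mu$ on core vectors and the closure of $\mx^l(E)$ under brackets. The only organisational difference is that, once the core contraction yields $q_E^*\langle\mu(X_1,X_2),e\rangle$, the paper invokes the structure of linear $1$-forms to write the left-hand side as $\dr\ell_{\mu(X_1,X_2)}+\widetilde\phi$ for some $\phi\in\Gamma(\Hom(E,T^*M))$ and then identifies $\phi$ by computing $\ip{\widehat{D_3}}\ip{\widehat{D_2}}\ip{\widehat{D_1}}\dr\Lambda_\mu-\widehat{D_3}(\ell_{\mu(X_1,X_2)})$, which cancels the $D_3^*$ term and leaves a five-term expression that is \emph{already} $\bigl(D_1(\ip{X_2}\mu)-D_2(\ip{X_1}\mu)-\ip{[X_1,X_2]}\mu\bigr)(X_3)$; you instead compute both sides on $\widehat{D_3}$ independently and then match the six terms using antisymmetry of $\mu$. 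The paper also recycles the core contraction $\ip{e^\uparrow}\ip{\widehat{D_2}}\ip{\widehat{D_1}}\dr\Lambda_\mu=q_E^*\langle\mu(X_1,X_2),e\rangle$ to deduce \eqref{inner2} by swapping $e^\uparrow$ past $\widehat{D_2}$, whereas you prove \eqref{inner2} from scratch; both routes are equally short.
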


\begin{proof}
We have for $e\in\Gamma(E)$:
\begin{align*}
  \ip{e^\uparrow}\ip{\widehat{D_2}}\ip{\widehat{D_1}}\dr\Lambda_\mu=\,&\widehat{D_1}\left(\Lambda_\mu\left(\widehat{D_2},
      e^\uparrow\right)\right)-\widehat{D_2}\left(\Lambda_\mu\left(\widehat{D_1}, e^\uparrow\right)\right)
  +e^\uparrow\left(\Lambda_\mu\left(\widehat{D_1},\widehat{D_2}\right)\right)\\
  &-\Lambda_\mu\left(\left[\widehat{D_1},\widehat{D_2}\right],
    e^\uparrow\right)+\Lambda_\mu\left(\left[\widehat{D_1},e^\uparrow\right], \widehat{D_2}\right)
  -\Lambda_\mu\left(\left[\widehat{D_2},e^\uparrow\right], \widehat{D_1}\right)\\
=\,& 0-0+e^\uparrow(\ell_{\mu(X_1,X_2)})-0+0-0=q_E^*\langle \mu(X_1,X_2), e \rangle.
\end{align*}
This shows that $\ip{\widehat{D_2}}\ip{\widehat{D_1}}\dr\Lambda_\mu=\dr\ell_{\ip{X_2}\ip{X_1}\mu}+\widetilde\phi$
for a section $\phi\in\Gamma(E^*\otimes T^*M)$.
We have then
\begin{align*}
  \ell_{\langle\phi, X_3\rangle}=\,&\langle \widetilde \phi,
  \widehat{D_3}\rangle
  =\ip{\widehat{D_3}}\ip{\widehat{D_2}}\ip{\widehat{D_1}}\dr\Lambda_\mu-\widehat{D_3}(\ell_{\ip{X_2}\ip{X_1}\mu})\\
  =\,&\widehat{D_1}\left(\Lambda_\mu\left(\widehat{D_2},
      \widehat{D_3}\right)\right)-\widehat{D_2}\left(\Lambda_\mu\left(\widehat{D_1},
      \widehat{D_3}\right)\right)+\cancel{\widehat{D_3}\left(\Lambda_\mu\left(\widehat{D_1},\widehat{D_2} \right)\right)}\\
  &-\Lambda_\mu\left(\left[\widehat{D_1},\widehat{D_2}\right],
    \widehat{D_3}\right)+\Lambda_\mu\left(\left[\widehat{D_1},\widehat{D_3}\right],
    \widehat{D_2}\right) -\Lambda_\mu\left(\left[\widehat{D_2},\widehat{D_3}\right],\widehat{D_1}\right)\\
  &-\cancel{\widehat{D_3}(\ell_{\ip{X_2}\ip{X_1}\mu})}\\
  =\,&\widehat{D_1}(\ell_{\mu(X_2,X_3)})-\widehat{D_2}(\ell_{\mu(X_1,X_3)})
  -\Lambda_\mu\left(\widehat{[D_1,D_2]},
    \widehat{D_3}\right)\\
&+\Lambda_\mu\left(\widehat{[D_1, D_3]},
    \widehat{D_2}\right) -\Lambda_\mu\left(\widehat{[D_2,D_3]},\widehat{D_1}\right)\\
  =\,&\ell_{D_{1}^*(\mu(X_2,X_3))}-\ell_{D_{2}^*(\mu(X_1,X_3))}-\ell_{\mu([X_1,X_2],
    X_3)}+\ell_{\mu([X_1,X_3], X_2)}-\ell_{\mu([X_2,X_3], X_1)}\\
  =\,&\ell_{\langle
    D_{1}(\ip{X_2}\mu)-D_{2}(\ip{X_1}\mu)-\ip{[X_1,X_2]}\mu,
    X_3\rangle}
\end{align*}
and we find \eqref{inner1}.
In order to prove \eqref{inner2}, we use the equation
\[\ip{e^\uparrow}\ip{\widehat{D_2}}\ip{\widehat{D_1}}\dr\Lambda_\mu
=q_E^*\langle \mu(X_1,X_2), e \rangle\]
 above to find that 
\[\ip{\widehat{D_2}}\ip{e^\uparrow}\ip{\widehat{D_1}}\dr\Lambda_\mu
=-\ip{\widehat{D_2}}(q_E^*\langle \mu(X_1), e \rangle)
\]
for all linear $\widehat{D_2}\in \mx^l(E)$.  Since
$\ip{{e'}^\uparrow}\ip{e^\uparrow}\ip{\widehat{D_1}}\dr\Lambda_\mu=0=-\ip{{e'}^\uparrow}(q_E^*\langle \mu(X_1), e
\rangle)$ for all $e'\in\Gamma(E)$, we find that
$\ip{e^\uparrow}\ip{\widehat{D_1}}\dr\Lambda_\mu=-q_E^*\langle \mu(X_1), e
\rangle$.
\end{proof}

 We will use the following lemma.
\begin{lemma}\label{inner_product_linear}
  Choose an element $\beta\in\Omega^1(M,E^*)$ and a linear vector
  field $\widehat D\in\mx^l(E)$ over $X\in \mx(M)$.  Then
  $\ip{\widehat D}\dr\Lambda_{\beta}$ is a linear $1$-form over
  $\beta(X)\in \Gamma(E^*)$. More precisely,
\[\ip{\widehat D}\dr\Lambda_{\beta}=-\dr\ell_{\beta}+\widetilde{D\beta}.
\] 
\end{lemma}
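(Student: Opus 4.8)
The plan is to establish the displayed identity $\ip{\widehat D}\dr\Lambda_{\beta}=-\dr\ell_{\beta}+\widetilde{D\beta}$ (where $\ell_\beta$ abbreviates $\ell_{\beta(X)}$, with $X$ the base vector field of $\widehat D$) by testing both sides against a generating set of vector fields on $E$. Since the linear vector fields $\widehat{D'}\in\mx^l(E)$, for $Y\in\mx(M)$, together with the vertical fields $e^\uparrow$, $e\in\Gamma(E)$, span $T_{e_m}E$ at every $e_m\in E$, it is enough to compare the two contractions $\ip{e^\uparrow}\ip{\widehat D}\dr\Lambda_\beta$ and $\ip{\widehat{D'}}\ip{\widehat D}\dr\Lambda_\beta$ with the corresponding contractions of the right-hand side. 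The ingredients are all available from Section \ref{cotangent_ex}: $\Lambda_\beta(e_m)=(T_{e_m}q_E)^*(\langle\beta,e\rangle(m))$ is pulled back along $T_{e_m}q_E$, so $\Lambda_\beta(e^\uparrow)=0$, whereas $\Lambda_\beta(\widehat{D'})=\ell_{\beta(Y)}$ follows directly from the definition together with $Tq_E\circ\widehat{D'}=Y\circ q_E$; the bracket identities $[\widehat D,e^\uparrow]=(De)^\uparrow$ (vertical) and $[\widehat D,\widehat{D'}]=\widehat{[D,D']}$ (linear over $[X,Y]$), as used in the preceding proposition; and the elementary actions $\widehat D(\ell_\varepsilon)=\ell_{D^*\varepsilon}$ and $e^\uparrow(\ell_\varepsilon)=q_E^*\langle\varepsilon,e\rangle$ from \eqref{ableitungen}.

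First I would contract with a vertical field. Applying Cartan's formula to $\ip{e^\uparrow}\ip{\widehat D}\dr\Lambda_\beta=\dr\Lambda_\beta(\widehat D,e^\uparrow)=\widehat D(\Lambda_\beta(e^\uparrow))-e^\uparrow(\Lambda_\beta(\widehat D))-\Lambda_\beta([\widehat D,e^\uparrow])$, the first and third terms vanish because $\Lambda_\beta$ kills vertical fields and $[\widehat D,e^\uparrow]=(De)^\uparrow$ is vertical; the middle term gives $-e^\uparrow(\ell_{\beta(X)})=-q_E^*\langle\beta(X),e\rangle$. On the right-hand side $\widetilde{D\beta}$ is core-linear and so annihilates $e^\uparrow$, while $\ip{e^\uparrow}\dr\ell_{\beta(X)}=q_E^*\langle\beta(X),e\rangle$; hence both sides agree.

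Next I would contract with a linear field $\widehat{D'}$ over $Y$. Cartan's formula now yields $\dr\Lambda_\beta(\widehat D,\widehat{D'})=\widehat D(\ell_{\beta(Y)})-\widehat{D'}(\ell_{\beta(X)})-\Lambda_\beta(\widehat{[D,D']})=\ell_{D^*(\beta(Y))}-\ell_{(D')^*(\beta(X))}-\ell_{\beta([X,Y])}$, using $\Lambda_\beta(\widehat{[D,D']})=\ell_{\beta([X,Y])}$ and $\widehat D(\ell_\varepsilon)=\ell_{D^*\varepsilon}$. On the right-hand side, $\ip{\widehat{D'}}\dr\ell_{\beta(X)}=\ell_{(D')^*(\beta(X))}$ and, reading off the pairing of a core-linear form with a linear field, $\langle\widetilde{D\beta},\widehat{D'}\rangle=\ell_{(D\beta)(Y)}$; substituting the definition $(D\beta)(Y)=D^*(\beta(Y))-\beta([X,Y])$ reproduces precisely the same linear function. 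Comparing the two contractions proves the identity, and the first assertion follows at once: since $\widetilde{D\beta}$ is core-linear, only the $\dr\ell_{\beta(X)}$-term contributes to the base section, so $\ip{\widehat D}\dr\Lambda_\beta$ is a linear $1$-form with base section determined by $\beta(X)$.

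The computation is routine once the conventions are fixed; the one genuinely delicate point is bookkeeping the identifications $\Omega^1(M,E^*)\cong\Gamma(\Hom(E,T^*M))\cong\Gamma(\Hom(TM,E^*))$ and their transposes, so that the pairing $\langle\widetilde{D\beta},\widehat{D'}\rangle$ is correctly evaluated as $\ell_{(D\beta)(Y)}$ and the sign of the $\dr\ell$-term comes out right. This is nothing but the one-degree-lower version of the $\mu\in\Omega^2(M,E^*)$ computation carried out just above, and the same bookkeeping transfers verbatim.
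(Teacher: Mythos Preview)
Your proof is correct and follows essentially the same approach as the paper's own proof: both test $\ip{\widehat D}\dr\Lambda_\beta$ against vertical fields $e^\uparrow$ and linear fields $\widehat{D'}$ via Cartan's formula, using $\Lambda_\beta(e^\uparrow)=0$, $\Lambda_\beta(\widehat{D'})=\ell_{\beta(Y)}$, and the bracket identities for linear and core vector fields. The only difference is presentational: the paper first deduces from the vertical contraction that $\ip{\widehat D}\dr\Lambda_\beta=-\dr\ell_{\beta(X)}+\widetilde\phi$ for some unknown $\phi$ and then solves for $\phi$ using the linear contraction, whereas you verify the stated identity directly on both generating families---the underlying computations are identical.
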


\begin{proof}
We have 
\[\ip{e^\uparrow}\ip{\widehat D}\dr\Lambda_{\beta}=\widehat D\langle \Lambda_{\beta}, e^\uparrow\rangle
-e^\uparrow\langle \Lambda_{\beta}, \widehat D\rangle-\langle \Lambda_{\beta}, (De)^\uparrow\rangle
=-e^\uparrow(\ell_{{\beta}(X)})=-q_E^*\langle e, {\beta}(X)\rangle.
\]
Therefore $\ip{\widetilde
  X}\dr\Lambda_{\beta}=-\dr\ell_{{\beta}(X)}+\widetilde \phi$ with a
section $\phi\in\Gamma(\operatorname{Hom}(E,T^*M))$ to be determined.
We have 
\begin{align*}
  \left\langle \widetilde \phi, \widehat{D'}\right\rangle&=\langle \ip{\widehat{D}}\dr\Lambda_{\beta}+\dr\ell_{{\beta}(X)}, \widehat{D'}\rangle\\
  &=\widehat D(\ell_{{\beta}(Y)})-\cancel{\widehat{D'}(\ell_{{\beta}(X)})}-\ell_{{\beta}[X,Y]}+\cancel{\widehat{D'}(\ell_{{\beta}(X)})}\\
  &=\ell_{D^*({\beta}(Y))-{\beta}[X,Y]}=\ell_{(D{\beta})(Y)}=\left\langle \widetilde{D{\beta}},
    \widehat{D'}\right\rangle
\end{align*}
for any linear vector field $\widehat{D'}\in \mx^l(E)$ over $Y\in\mx(M)$.
This shows that $\phi=D{\beta}$.
\end{proof}

\section{Dorfman brackets and natural lifts.} \label{sec:Dorfmanlifts} 
Consider now an $\R$-linear lift
\[\Xi\colon\Gamma(TM\oplus E^*)\to \Gamma_E^l(TE\oplus T^*E),
\]
sending each section $(X,\varepsilon)$ of $TM\oplus E^*$ to a linear section over
$(X,\varepsilon)$.
Then the lift defines an $\R$-linear map
\[\mathcal D\colon \Gamma(TM\oplus E^*)\to \operatorname{Der}(E\oplus T^*M),
\quad \Xi(X,\varepsilon)=\chi_{\varepsilon, \mathcal D_{(X,\varepsilon)}}.
\]
Consider the dual \[ \lb \cdot\,,\cdot\rb\colon \Gamma(TM\oplus
E^*)\times\Gamma(TM\oplus E^*)\to \Gamma(TM\oplus E^*)\] of $\mathcal
D$, written in bracket form and defined by
\[X\langle \nu, \tau\rangle=\langle \nu, \mathcal
D_{(X,\varepsilon)}\tau\rangle+\langle \lb (X,\varepsilon), \nu\rb,
\tau\rangle
\]
for all $(X,\varepsilon), \nu\in \Gamma(TM\oplus E^*)$ and
$\tau\in\Gamma(E\oplus T^*M)$. Any bracket defined in this manner is
$\R$-bilinear, anchored by $\pr_{TM}$ and satisfies a Leibniz identity
in its second component.
We easily get the following result.
\begin{proposition}\label{rough_lift}
Lifts \[\Xi\colon\Gamma(TM\oplus E^*)\to \Gamma_E^l(TE\oplus T^*E),
\]
sending each section $(X,\varepsilon)$ of $TM\oplus E^*$ to a linear section over
$(X,\varepsilon)$ are equivalent to $\R$-bilinear brackets $\lb
\cdot\,,\cdot\rb\colon \Gamma(TM\oplus E^*)\times\Gamma(TM\oplus
E^*)\to \Gamma(TM\oplus E^*)$, that are anchored by $\pr_{TM}$ and
satisfy a Leibniz identity in the second component.
\end{proposition}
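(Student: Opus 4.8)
The plan is to upgrade the construction recorded just before the statement into an explicit bijection, by writing down the inverse assignment and checking that the two directions are mutually inverse. It is cleanest to factor the correspondence through the intermediate datum of an $\R$-linear map $\mathcal D\colon\Gamma(TM\oplus E^*)\to\operatorname{Der}(E\oplus T^*M)$ whose value $\mathcal D_{(X,\varepsilon)}$ is a derivation of symbol $X$ satisfying the constraint $\mathcal D_{(X,\varepsilon)}(e,\theta)=\mathcal D_{(X,\varepsilon)}(e,0)+(0,\ldr{X}\theta)$. On one side, lifts correspond bijectively to such maps $\mathcal D$: this is exactly Theorem \ref{linear_are_der_thm} together with its converse $\mathcal D\mapsto\bigl((X,\varepsilon)\mapsto\chi_{\varepsilon,\mathcal D_{(X,\varepsilon)}}\bigr)$, the uniqueness clause of that theorem guaranteeing the two assignments are inverse. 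It therefore suffices to show that dualisation sets up a bijection between these maps $\mathcal D$ and the brackets described in the statement.

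For the dualisation, starting from a bracket $\lb\cdot,\cdot\rb$ that is $\R$-bilinear, anchored by $\pr_{TM}$ and Leibniz in its second slot, I would define $\mathcal D_q\tau$ by $\langle\nu,\mathcal D_q\tau\rangle=(\pr_{TM}q)\langle\nu,\tau\rangle-\langle\lb q,\nu\rb,\tau\rangle$, using the canonical pairing of $TM\oplus E^*$ with $E\oplus T^*M$. The point that makes this well defined is that the right-hand side is $C^\infty(M)$-linear in $\nu$: expanding $\langle\lb q,f\nu\rb,\tau\rangle$ by the Leibniz rule in the second slot, the two $(\pr_{TM}q)(f)\langle\nu,\tau\rangle$ terms cancel, so nondegeneracy of the pairing produces a genuine section $\mathcal D_q\tau$; the identical computation applied to $f\tau$ shows $\mathcal D_q$ is a derivation of symbol $\pr_{TM}q$. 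Since the defining relation $\langle\lb q,\nu\rb,\tau\rangle+\langle\nu,\mathcal D_q\tau\rangle=(\pr_{TM}q)\langle\nu,\tau\rangle$ is symmetric in its two unknowns, this is precisely inverse to the forward formula, and $\R$-bilinearity and the Leibniz property transfer across by the same bookkeeping already indicated in the text.

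The genuinely substantive step, and the one I expect to be the main obstacle, is to match the derivation constraint $\mathcal D_q(0,\theta)=(0,\ldr{X}\theta)$ with the compatibility $\pr_{TM}\lb q,\nu\rb=[\,\pr_{TM}q,\pr_{TM}\nu\,]$ encoded in ``anchored by $\pr_{TM}$''. I would evaluate the dualisation formula on $\tau=(0,\theta)$ against $\nu=(Y,\eta)$, getting $\langle(Y,\eta),\mathcal D_q(0,\theta)\rangle=X(\theta(Y))-\theta(\pr_{TM}\lb q,\nu\rb)$, where $X=\pr_{TM}q$. Letting $\eta$ vary forces the $E$-component of $\mathcal D_q(0,\theta)$ to vanish, while comparison with the Cartan identity $(\ldr{X}\theta)(Y)=X(\theta(Y))-\theta([X,Y])$ shows that the surviving $T^*M$-component equals $\ldr{X}\theta$ for all $\theta$ if and only if $\pr_{TM}\lb q,\nu\rb=[X,Y]$ for all $\nu=(Y,\eta)$. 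This single equivalence does double duty: in the backward direction it supplies constraint (1) from the anchor compatibility, and in the forward direction it supplies the anchor compatibility from constraint (1) of Theorem \ref{linear_are_der_thm}. Hence both dualisation assignments land in the asserted classes and are mutually inverse, and composing with the lift--$\mathcal D$ bijection gives the claimed equivalence.
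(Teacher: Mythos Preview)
Your proposal is correct and follows precisely the approach the paper sketches in the paragraph preceding the proposition: factor through the intermediate datum $\mathcal D$ using Theorem~\ref{linear_are_der_thm}, then dualise. The paper itself gives no formal proof (it writes ``We easily get the following result''), so what you have written is a faithful fleshing-out of that sketch.

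One small remark on presentation: your sentence ``Letting $\eta$ vary forces the $E$-component of $\mathcal D_q(0,\theta)$ to vanish'' reads as though this vanishing were automatic, independent of the anchor compatibility. It is not: setting $Y=0$ in your identity gives $\eta(e'_\theta)=-\theta\bigl(\pr_{TM}\lb q,(0,\eta)\rb\bigr)$, and this vanishes for all $\eta,\theta$ precisely when $\pr_{TM}\lb q,(0,\eta)\rb=0$, which is part of the anchor compatibility $\pr_{TM}\lb q,\nu\rb=[X,\pr_{TM}\nu]$. Since you ultimately treat the whole computation as an if-and-only-if between constraint (1) and the anchor condition, the logic is sound; just make clear that the vanishing of the $E$-component is itself one half of that equivalence rather than a preliminary observation.
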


  Define further the map $\delta\colon
\Gamma(TM\oplus E^*)\to \operatorname{Der}(E)$ by
\[\delta_\nu =\pr_E\circ\mathcal D_\nu\circ \iota_E.
\] 

As we have seen before, the lift $\Xi\colon
\Gamma(TM\oplus E^*)\to\Gamma^l_E(TE\oplus T^*E)$ can be written
\begin{equation}\label{general_lift}
\Xi(X,\varepsilon)(e_m)=\left(T_meX(m), \dr_{e_m}\ell_\varepsilon\right)-(\mathcal D_{(X,\varepsilon)}(e,0))^\uparrow(e_m),
\end{equation}
for any $e\in\Gamma(E)$ with $e(m)=e_m$, or 
\[\Xi(X,\varepsilon)=(\widehat{\delta_{(X,\varepsilon)}},\dr\ell_\varepsilon-\reallywidetilde{\pr_{T^*M}\mathcal
    D_{(X,\varepsilon)}\circ\iota_E}).
\]

In terms of sections of the Omni-Lie algebroid
$\operatorname{Der}(E^*)\oplus J^1(E^*)$, this says that anchored
$\R$-bilinear brackets on $TM\oplus E^*$ with Leibniz rule in the
second component are in one-to-one correspondence with splittings
\[\Xi\colon \Gamma(TM\oplus E^*) \rightarrow \Gamma(\operatorname{Der}(E^*)\oplus J^1(E^*)),\]
of the short exact sequence 
\[ 0 \rightarrow \Gamma(E^*\otimes(E\oplus T^*M))\rightarrow \Gamma(\operatorname{Der}(E^*)\oplus J^1(E^*))\rightarrow \Gamma(TM\oplus E^*)\rightarrow 0 \]

Note that in either description the map $\Xi$ is a map of sections
only, so its image will in general not span a sub-vector bundle of
$\widehat{E}\cong \operatorname{Der}(E^*)\oplus J^1(E^*)$.

We  prove the following theorem, which shows that a chosen lift as
above is \emph{natural}, if and only if the bracket
$\lb\cdot\,,\cdot\rb$ is a Dorfman bracket.
\begin{theorem}\label{super}
  Let $E$ be a smooth vector bundle over a manifold $M$.  Consider an
  $\R$-bilinear bracket $\lb\cdot\,,\cdot\rb$ on sections of $TM\oplus
  E^*$, that is anchored by $\pr_{TM}$ and satisfies the Leibniz
  identity in its second component.  Then $\lb\cdot\,,\cdot\rb$ is a
  Dorfman bracket if and only if the corresponding lift as in
  Proposition \ref{rough_lift} or \eqref{general_lift} is natural,
  i.e.~if and only if
\[\lb \Xi(\nu_1), \Xi(\nu_2)\rb=\Xi\lb \nu_1, \nu_2\rb
\] for all $\nu_1,\nu_2\in\Gamma(TM\oplus E^*)$, where the bracket on
the left-hand side is the Courant-Dorfman bracket.
\end{theorem}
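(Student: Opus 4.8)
The plan is to use the fact, supplied by Theorem \ref{linear_are_der_thm} together with Theorem \ref{prop_2_chi}, that a linear section of $TE\oplus T^*E\to E$ is completely determined by the pair in $\Gamma(TM\oplus E^*)$ it covers together with its associated derivation of $E\oplus T^*M$. Since $\lb\Xi(\nu_1),\Xi(\nu_2)\rb$ and $\Xi\lb\nu_1,\nu_2\rb$ are both linear sections, the naturality equation will split into an equality of covering pairs and an equality of derivations, and I would show that the first always holds while the second is exactly the Jacobi identity. Throughout I write $\nu_i=(X_i,\varepsilon_i)$ and $\Xi(\nu_i)=\chi_{\varepsilon_i,\mathcal D_{\nu_i}}$, where $\mathcal D$ is the derivation-valued dual map attached to the lift as in Proposition \ref{rough_lift}.

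First I would record that the bracket dual to $\mathcal D$ is $\lb\nu_1,\nu_2\rb=\mathcal D_{\nu_1}^*\nu_2$: this is immediate from the defining relation $X_1\langle\nu_2,\tau\rangle=\langle\nu_2,\mathcal D_{\nu_1}\tau\rangle+\langle\lb\nu_1,\nu_2\rb,\tau\rangle$. A short block computation, using that $\mathcal D_{\nu_1}$ has symbol $X_1$ and acts on the $T^*M$-summand by $\ldr{X_1}$, shows that $\mathcal D_{\nu_1}^*$ is a derivation of $TM\oplus E^*$ whose $TM$-component is $[X_1,\cdot]$, so that
\[
\lb\nu_1,\nu_2\rb=\left([X_1,X_2],\ \pr_{E^*}\mathcal D_{\nu_1}^*\nu_2\right).
\]
On the other side, Theorem \ref{prop_2_chi} gives $\lb\Xi(\nu_1),\Xi(\nu_2)\rb=\chi_{\pr_{E^*}\mathcal D_{\nu_1}^*\nu_2,\,[\mathcal D_{\nu_1},\mathcal D_{\nu_2}]}$, a linear section covering the very same pair $([X_1,X_2],\pr_{E^*}\mathcal D_{\nu_1}^*\nu_2)$ and carrying the derivation $[\mathcal D_{\nu_1},\mathcal D_{\nu_2}]$. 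The point to stress is that this matching of covering pairs is unconditional: it uses only the symbol of $\mathcal D_{\nu_1}$ and the definition of the bracket as dual of $\mathcal D$, and no form of the Jacobi identity.

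Since $\Xi\lb\nu_1,\nu_2\rb$ covers $\lb\nu_1,\nu_2\rb$ and carries the derivation $\mathcal D_{\lb\nu_1,\nu_2\rb}$ by construction, the uniqueness statement of Theorem \ref{linear_are_der_thm} reduces the naturality equation to
\[
[\mathcal D_{\nu_1},\mathcal D_{\nu_2}]=\mathcal D_{\lb\nu_1,\nu_2\rb}\qquad\text{for all }\nu_1,\nu_2\in\Gamma(TM\oplus E^*).
\]
Now $\mathcal D$ is precisely the dual map of $\lb\cdot\,,\cdot\rb$ from the preliminaries, which is well defined for any $\R$-bilinear bracket anchored by $\pr_{TM}$ with the Leibniz rule in its second argument. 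By \eqref{eq_jac_dual}, the displayed identity is equivalent to the Jacobi identity in Leibniz form. Combined with the anchor compatibility $\pr_{TM}\lb\nu_1,\nu_2\rb=[X_1,X_2]$ obtained above and the standing Leibniz rule, this is exactly the condition for $\lb\cdot\,,\cdot\rb$ to be a transitive Leibniz algebroid anchored by $\pr_{TM}$, i.e.\ a Dorfman bracket. Reading the chain of equivalences in both directions proves the theorem.

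I expect the main technical point to be the second paragraph, namely the unconditional matching of covering pairs: one must check carefully that the $E^*$-component $\pr_{E^*}\mathcal D_{\nu_1}^*\nu_2$ delivered by Theorem \ref{prop_2_chi} is the same $E^*$-component as that of $\lb\nu_1,\nu_2\rb=\mathcal D_{\nu_1}^*\nu_2$, and that both $TM$-components collapse to $[X_1,X_2]$, so that all the content of naturality is genuinely concentrated in the derivations. Once this is secured, everything else is a direct appeal to \eqref{eq_jac_dual}.
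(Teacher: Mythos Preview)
Your proof is correct and follows essentially the same route as the paper's: both compute $\lb\Xi(\nu_1),\Xi(\nu_2)\rb$ via Theorem \ref{prop_2_chi}, observe that the covering pair always matches $\lb\nu_1,\nu_2\rb=\mathcal D_{\nu_1}^*\nu_2$, and reduce naturality to the derivation identity $[\mathcal D_{\nu_1},\mathcal D_{\nu_2}]=\mathcal D_{\lb\nu_1,\nu_2\rb}$, which is \eqref{eq_jac_dual}. The only cosmetic difference is that the paper inserts an intermediate Lemma \ref{jacobi_eq} splitting this derivation identity into its $E$- and $T^*M$-components before invoking Jacobi, whereas you appeal to \eqref{eq_jac_dual} directly; implicit in your shortcut is the (easy) check that $[\mathcal D_{\nu_1},\mathcal D_{\nu_2}]$ already has the normalised form $(e,\theta)\mapsto[\mathcal D_{\nu_1},\mathcal D_{\nu_2}](e,0)+(0,\ldr{[X_1,X_2]}\theta)$, which is what makes the uniqueness in Theorem \ref{linear_are_der_thm} apply cleanly.
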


The proof of this theorem follows from the general results in
\ref{subsec:VBCA} and is given in Appendix \ref{proofs2}.  Note that
the proof of this theorem can also be adapted in a straightforward
manner from the proof of the main theorem in \cite{Jotz18a} (see the
following remark); the only difference being that $\mathcal{D}$ is not
$C^{\infty}(M)$-linear in its lower entry. The proof in \cite{Jotz18a}
is however independent of this property.

\begin{remark}\label{sigma_Delta}
  Note that horizontal lifts $\sigma\colon\Gamma(TM\oplus E^*)\to
  \Gamma_E^l(TE\oplus T^*E)$ satisfying
  $\sigma(\nu_1+\nu_2)=\sigma(\nu_1)+\sigma(\nu_2)$ and
  $\sigma(f\cdot\nu)=q_E^*f\sigma(\nu)$ are called \emph{linear}. The
  horizontal lifts above are in general not linear; they are additive,
  but in general they are not $C^\infty(M)$-homogeneous.

  Linear horizontal lifts $\sigma\colon\Gamma(TM\oplus E^*)\to
  \Gamma_E^l(TE\oplus T^*E)$ were proved in \cite{Jotz18a} to be
  equivalent to dull brackets on sections of $TM\oplus E^*$, or
  equivalently to Dorfman connections $\Gamma(TM\oplus
  E^*)\times\Gamma(E\oplus T^*M)\to\Gamma(E\oplus T^*M)$.

Let $\Delta\colon \Gamma(TM\oplus E^*)\times\Gamma(E\oplus
T^*M)\to\Gamma(E\oplus T^*M)$ be a Dorfman connection and consider the
dual dull bracket $\lb\cdot\,,\cdot\rb_\Delta$.  Note that the map
\[\nabla\colon\Gamma(TM\oplus E^*)\times\Gamma(E)\to\Gamma(E), \qquad
\nabla_\nu e=\pr_E(\Delta_\nu(e,0))
\]
is a linear connection. Choose $\nu, \nu_1,\nu_2\in\Gamma(TM\oplus
E^*)$ and $\tau\in\Gamma(E\oplus T^*M)$. \cite{Jotz18a} proves the
following identities
\begin{enumerate}
\item $\left\langle \sigma^\Delta(\nu_1), \sigma^\Delta(\nu_2)\right\rangle=\ell_{
  \lb\nu_1, \nu_2\rb_\Delta+\lb\nu_2, \nu_1\rb_\Delta}$,
\item $\left\langle \sigma^\Delta(\nu),
    \tau^\uparrow\right\rangle=q_E^*\langle \nu, \tau\rangle$,
\item $\pr_{TE}\left(\sigma^\Delta(\nu)\right)=\widehat{\nabla_\nu}$
and $\pr_{TE}(\tau^\uparrow)=(\pr_E\tau)^\uparrow$,
\item $\left\lb \sigma^\Delta(\nu),
    \tau^\uparrow\right\rb=\left(\Delta_{\nu}\tau\right)^\uparrow$,
 \item $ \left\lb \sigma^\Delta(\nu_1), \sigma^\Delta(\nu_2)\right\rb =\sigma^\Delta(\lb \nu_1,
   \nu_2\rb_\Delta)-\widetilde{R_\Delta(\nu_1, \nu_2)\circ \iota_E}$.
\end{enumerate}
Those results could now be easily deduced from Theorems \ref{prop_1_chi} and
\ref{prop_2_chi}, as we deduce our main result Theorem \ref{super} from those.
\end{remark}

Here, we have the following result, a counterpart for Dorfman
brackets of the results described in Remark \ref{sigma_Delta}. Note that
since $\lb\cdot\,,\cdot\rb$ is anchored by $\pr_{TM}$, the sum $\lb
\nu_1,\nu_2\rb+\lb \nu_2,\nu_1\rb$ is a section of $E^*$ for all
$\nu_1,\nu_2\in\Gamma(TM\oplus E^*)$.

\begin{theorem}\label{main2}
  Let $\lb\cdot\,,\cdot\rb$ be a Dorfman bracket on sections of
  $TM\oplus E^*$.  For all $\nu,\nu_1,\nu_2\in\Gamma(TM\oplus E^*)$
  and $\tau\in\Gamma(E\oplus T^*M)$, we have
\begin{enumerate}
\item $\left\langle \Xi(\nu_1), \Xi(\nu_2)\right\rangle=\ell_{
  \lb\nu_1, \nu_2\rb+\lb\nu_2, \nu_1\rb}$ and  $\left\langle \Xi(\nu),
    \tau^\uparrow\right\rangle=q_E^*\langle \nu, \tau\rangle$,
\item $\pr_{TE}\left(\Xi(\nu)\right)=\widehat{\delta_\nu}$
and $\pr_{TE}(\tau^\uparrow)=(\pr_E\tau)^\uparrow$,
\item $\left\lb \Xi(\nu),
    \tau^\dagger\right\rb=\mathcal D_{\nu}\tau^\dagger$.
\end{enumerate}
\end{theorem}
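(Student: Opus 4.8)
The plan is to obtain all three identities as direct specializations of the general results on linear sections in Theorems \ref{prop_1_chi} and \ref{prop_2_chi}, once the abstract data attached to a linear section are matched with the objects built from the bracket. By construction $\Xi(\nu)=\chi_{\varepsilon,\mathcal{D}_\nu}$ for $\nu=(X,\varepsilon)$, so the derivation attached to the linear section $\Xi(\nu)$ is $D_{\Xi(\nu)}=\mathcal{D}_\nu$, and its restriction to $E$ is $d_{\Xi(\nu)}=\pr_E\circ\mathcal{D}_\nu\circ\iota_E=\delta_\nu$. With these identifications, three of the four claims are immediate. The second formula in (1) is the pairing identity $\langle\chi,\tau^\uparrow\rangle=q_E^*\langle(X,\varepsilon),\tau\rangle$ of Theorem \ref{prop_1_chi} applied to $\chi=\Xi(\nu)$. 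Statement (2) combines $\pr_{TE}(\chi)=\widehat{d_\chi}$ from Theorem \ref{prop_1_chi}, which gives $\pr_{TE}(\Xi(\nu))=\widehat{\delta_\nu}$, with the computation in Section \ref{cotangent_ex} that the anchor of a vertical section $\tau^\uparrow$ equals $(\pr_E\tau)^\uparrow$. Statement (3) is the bracket identity $\lb\chi,\tau^\uparrow\rb=D_\chi\tau^\uparrow$ of Theorem \ref{prop_1_chi}, after noting that the core sections $\tau^\dagger$ of $TE\oplus T^*E\to E$ are precisely the vertical sections $\tau^\uparrow$, so that $\lb\Xi(\nu),\tau^\dagger\rb=\mathcal{D}_\nu\tau^\dagger$.

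The only computation needed is for the first formula in (1), and its heart is the algebraic identity $\mathcal{D}_{\nu_1}^*\nu_2=\lb\nu_1,\nu_2\rb$ in $\Gamma(TM\oplus E^*)$. To prove it I would compare two expressions for $X_1\langle\nu_2,\tau\rangle$, where $\tau\in\Gamma(E\oplus T^*M)$: the defining relation of the dual bracket reads $X_1\langle\nu_2,\tau\rangle=\langle\nu_2,\mathcal{D}_{\nu_1}\tau\rangle+\langle\lb\nu_1,\nu_2\rb,\tau\rangle$, whereas the definition of the dual derivation $\mathcal{D}_{\nu_1}^*$ (which has symbol $X_1$) reads $X_1\langle\nu_2,\tau\rangle=\langle\mathcal{D}_{\nu_1}^*\nu_2,\tau\rangle+\langle\nu_2,\mathcal{D}_{\nu_1}\tau\rangle$. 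Subtracting and invoking the nondegeneracy of the pairing between $TM\oplus E^*$ and $E\oplus T^*M$, valid for all $\tau$, yields $\mathcal{D}_{\nu_1}^*\nu_2=\lb\nu_1,\nu_2\rb$.

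With this identity I would then apply the pairing formula of Theorem \ref{prop_2_chi}, $\langle\chi_1,\chi_2\rangle=\ell_{\pr_{E^*}(D_{\chi_1}^*(X_2,\varepsilon_2)+D_{\chi_2}^*(X_1,\varepsilon_1))}$, to $\chi_i=\Xi(\nu_i)$. Substituting $D_{\Xi(\nu_i)}^*\nu_j=\mathcal{D}_{\nu_i}^*\nu_j=\lb\nu_i,\nu_j\rb$ gives $\langle\Xi(\nu_1),\Xi(\nu_2)\rangle=\ell_{\pr_{E^*}(\lb\nu_1,\nu_2\rb+\lb\nu_2,\nu_1\rb)}$. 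Since the bracket is anchored by $\pr_{TM}$, the symmetric combination $\lb\nu_1,\nu_2\rb+\lb\nu_2,\nu_1\rb$ already lies in $\Gamma(E^*)$, as recorded just before the theorem, so $\pr_{E^*}$ acts as the identity on it and the first formula in (1) follows.

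I expect no genuine obstacle: the Jacobi identity in Leibniz form plays no role, so the three identities in fact hold for any $\R$-bilinear bracket anchored by $\pr_{TM}$ with the Leibniz rule in its second slot, the Dorfman hypothesis being stated only to fix the context (it is the naturalness of Theorem \ref{super}, not these identities, that detects it). The main care required is purely bookkeeping: correctly matching the derivation-theoretic data $D_{\chi_i}$ and $d_{\chi_i}$ of Theorems \ref{prop_1_chi}--\ref{prop_2_chi} with $\mathcal{D}_{\nu_i}$, $\delta_{\nu_i}$, and the dual bracket, after which each assertion is a direct substitution.
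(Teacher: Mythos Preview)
Your approach is correct and matches the paper's own proof, which simply states that all identities follow from Theorems \ref{prop_1_chi} and \ref{prop_2_chi}. You have spelled out in detail what the paper leaves implicit, including the key identification $\mathcal{D}_{\nu_1}^*\nu_2=\lb\nu_1,\nu_2\rb$ (which the paper records in the proof of Theorem \ref{super}) and the correct observation that the Jacobi identity is not used here.
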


\begin{proof}Those identities are all given by Theorems
  \ref{prop_1_chi} and \ref{prop_2_chi}.
\end{proof}

\subsection{Links to known results on Omni-Lie algebroids, on Dorfman
  connections and on the standard VB-Courant algebroid}

\cite{ChLi10, ChZhSh11} prove the following result on Lie
algebroid structures on subbundles of $TM\oplus E^*$ versus Dirac
structures inside the $E^*$-valued Courant-algebroid
$\mathcal{E}:=\operatorname{Der}(E^*)\oplus J^1(E^*)$.
Note that such a Dirac structure is called \emph{reducible} if its
projection to $TM\oplus E^*$ is surjective.
\begin{theorem}\textbf{\emph{(Theorem 3.7 in \cite{ChZhSh11})}} There
  is a one-to-one correspondence between reducible Dirac structures
  $L\subset \mathcal{E}$ and projective Lie algebroids
  $A\subset TM\oplus E^*$ such that $A$ is the quotient Lie algebroid
  of $L$. (As a Dirac structure, $L$ carries a Lie bracket induced by
  the Courant-Dorfman bracket.)
\end{theorem}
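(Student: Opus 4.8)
The plan is to work entirely inside the fat bundle $\widehat{E}\cong\mathcal E=\operatorname{Der}(E^*)\oplus J^1(E^*)$, using that, by Section \ref{sec:fatb}, its $E^*$-valued pairing and Leibniz bracket are exactly the restrictions of the standard Courant--Dorfman structure on $TE\oplus T^*E$ to linear sections. Under this identification a Dirac structure $L\subset\mathcal E$ is a maximal isotropic, involutive subbundle of $\widehat E$ over $M$, and the projection $\pi\colon\mathcal E\to TM\oplus E^*$ is the map $\chi\mapsto\nu$ sending a linear section to its base section, with kernel the core-linear sections $E^*\otimes(E\oplus T^*M)$. I read reducibility as the condition that $A:=\pi(L)$ is a smooth subbundle of $TM\oplus E^*$ and $\pi|_L\colon L\twoheadrightarrow A$ is a surjective bundle map. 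I would then build two mutually inverse constructions $L\rightsquigarrow A$ and $A\rightsquigarrow L$ and verify their properties using Theorems \ref{super}, \ref{main2} and Corollary \ref{new_lemma}.

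For $L\rightsquigarrow A$, I would define the quotient bracket by $[\pi\ell_1,\pi\ell_2]_A:=\pi\lb\ell_1,\ell_2\rb$ for $\ell_1,\ell_2\in\Gamma(L)$, the bracket on the right being that of $\mathcal E$. The first task is well-definedness: replacing $\ell_i$ by $\ell_i+k$ with $k$ a section of $K:=L\cap\ker\pi$ changes the bracket by $\lb k,\ell_2\rb$ or $\lb\ell_1,k\rb$, and since $k$ is core-linear, Corollary \ref{new_lemma} gives $\lb\ell_1,k\rb\in\ker\pi$, while isotropy of $L$ together with the symmetric-part axiom $\lb k,\ell_2\rb+\lb\ell_2,k\rb=\mathcal D\langle k,\ell_2\rangle=0$ gives $\lb k,\ell_2\rb=-\lb\ell_2,k\rb\in\ker\pi$ as well. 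Involutivity of $L$ then places $\pi\lb\ell_1,\ell_2\rb$ in $\Gamma(A)$. Isotropy makes the symmetric part $\mathcal D\langle\ell_1,\ell_2\rangle$ vanish, so the induced bracket on $A$ is skew; the Jacobi identity in Leibniz form on $\mathcal E$ descends, and combined with skew-symmetry yields the Jacobi identity, with anchor $\pr_{TM}|_A$. This exhibits $A$ as the quotient Lie algebroid of $L$.

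For the reverse direction $A\rightsquigarrow L$, I would first extend the given Lie bracket on $\Gamma(A)$ to a Dorfman bracket $\lb\cdot\,,\cdot\rb$ on $\Gamma(TM\oplus E^*)$ anchored by $\pr_{TM}$ (such an extension exists, for instance by choosing a complement and a Dorfman connection). By Theorem \ref{super} this produces a natural lift $\Xi\colon\Gamma(TM\oplus E^*)\to\Gamma_E^l(TE\oplus T^*E)$. Restricting $\Xi$ to $\Gamma(A)$, Theorem \ref{main2}(1) shows the image is isotropic, since skew-symmetry of the Lie bracket on $A$ gives $\langle\Xi(a_1),\Xi(a_2)\rangle=\ell_{\lb a_1,a_2\rb+\lb a_2,a_1\rb}=0$, and naturalness gives $\lb\Xi(a_1),\Xi(a_2)\rb=\Xi\lb a_1,a_2\rb$, so the image is involutive. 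One then adjoins the isotropic core directions that pair trivially with the lifts of $A$ (controlling the cross-pairings $\langle\Xi(a),\widetilde\varphi\rangle$ via Section \ref{sec:fatb} and Corollary \ref{new_lemma}) to complete the lift of $A$ to a maximal isotropic subbundle $L$ with $\pi(L)=A$, and finally check that the two constructions are mutually inverse.

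The main obstacle I anticipate is precisely this reconstruction step: as the paper itself notes, $\Xi$ is only an $\R$-linear map of sections and is in general not $C^\infty(M)$-homogeneous, so $\Xi(\Gamma(A))$ need not be the sheaf of sections of a subbundle. The delicate point is to correct $\Xi|_A$ by core-linear terms so as to obtain a genuine maximal isotropic subbundle whose quotient returns the original Lie algebroid, independently of the chosen Dorfman extension. The cleanest way to dispose of this is to rephrase the whole statement as the VB-Dirac correspondence for $TE\oplus T^*E$: a subbundle $L\subset\widehat E$ is $E^*$-isotropic and involutive if and only if it is the space of linear sections of a linear (double vector subbundle) Dirac structure $\mathcal L\subset TE\oplus T^*E$ over $E$, whose side bundle is $A$. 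Theorem \ref{main2} then identifies the induced Lie algebroid on $A$, and the subbundle difficulty becomes the statement that $\mathcal L$ has locally constant rank, which is exactly reducibility.
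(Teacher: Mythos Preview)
The paper does not prove this theorem: it is quoted verbatim as Theorem 3.7 of \cite{ChZhSh11}, and the text immediately following the statement says ``For details, see \cite{ChZhSh11}.'' So there is no proof in the paper to compare your proposal against; the theorem is only being recalled in order to situate the paper's own results (on natural lifts $\Xi$) within the existing literature on Omni-Lie algebroids.

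That said, a few remarks on your proposal are in order. Your forward construction $L\rightsquigarrow A$ is sound and is essentially the quotient construction the paper alludes to. The reverse direction, however, has a genuine gap beyond the non-$C^\infty(M)$-linearity of $\Xi$ that you flag. You assert that the Lie bracket on $\Gamma(A)$ can be extended to a Dorfman bracket on all of $\Gamma(TM\oplus E^*)$ ``for instance by choosing a complement and a Dorfman connection'', but this is not obvious: a Dorfman bracket must satisfy the Jacobi identity in Leibniz form on the whole of $TM\oplus E^*$, and an arbitrary extension of a Lie bracket from a subbundle need not do so. Even granting such an extension, you would then need to show that the resulting $L$ is independent of the choice, which you do not address. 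Your suggested workaround via VB-Dirac structures in $TE\oplus T^*E$ is closer in spirit to how this is actually handled, but note that the correspondence between linear Dirac structures in $TE\oplus T^*E$ and Lie algebroid structures on subbundles of $TM\oplus E^*$ is itself a nontrivial result (treated in \cite{Jotz16a} and related work), not something that drops out of Theorem \ref{main2} alone.
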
 
A \emph{projective Lie algebroid} is a subbundle
$A\subset TM\oplus E^*$ with a Lie algebroid structure
$(A,[\cdot,\cdot]_A, \rho_A)$, with anchor given by
$\rho_A=\operatorname{pr}_{TM}\vert_A$. A \emph{reducible} Dirac
structure $L\subset \mathcal{E}$ is a Dirac structure the image of
which in $TM\oplus E^*$
under $\mathbf{b}\colon \mathcal{E}\rightarrow TM\oplus E^*$ is a regular
subbundle. The correspondence in the theorem is such that
$A=\mathbf{b}(L)$, and the Lie bracket is the quotient Lie bracket on
$A$ induced by the short exact sequence
\[ 0\rightarrow A^0 \rightarrow L \stackrel{\mathbf{b}}{\rightarrow}A\rightarrow 0\] 
For details, see \cite{ChZhSh11}. 

This result, our results from Section \ref{sec:Dorfmanlifts} and the
results from \cite{Jotz18a} as outlined in Remark \ref{sigma_Delta}, suggest
the following relationships between subspaces of
$\Gamma(\widehat{E})\cong\Gamma(\mathcal{E})$ that are closed under
$\lb\cdot,\cdot\rb$ and project to locally-free subsheaves of
$\Gamma(TM\oplus E^*)$, and $\R$-bilinear brackets on subbundles of
$TM\oplus E^*$:

Let $\mathcal{V}\subset \Gamma(\widehat{E})$ be a subspace that is
closed under $\lb\cdot,\cdot\rb$ and such that $\mathcal{V}$ maps to
$\Gamma(F), F$ a subbundle of $TM\oplus E^*$. Then, collectively, we
have the following results:
\subsubsection{Setting 1:} $F=TM\oplus E^*, \mathcal{V}=\Im(\Xi)$, where $\Xi\colon\Gamma(TM\oplus E^*)\rightarrow \Gamma(\widehat{E})$ is a splitting of $p:\Gamma(\widehat{E})\rightarrow \Gamma(TM\oplus E^*)$. This is just the setting of Proposition \ref{rough_lift}, i.e.~such lifts precisely correspond to brackets on $TM\oplus E^*$ that satisfy a Leibniz identity in the second component. \\
Now, if $\mathcal{V}$ is additionally a sub-vector bundle of $\widehat{E}$ and $\Xi$ a morphism of vector bundles, we are in the setting of dull brackets and Dorfman connections, as studied in \cite{Jotz18a}, i.e. the resulting bracket satisfies the Leibniz identity also in its first component.  \\
If instead (or additionally) the lift $\Xi$ is \emph{natural},
i.e.~$\lb\Xi\cdot,\Xi\cdot\rb=\Xi\lb\cdot,\cdot\rb$, the bracket on
$TM\oplus E^*$ satisfies the Dorfman condition (the Jacobi identity in
Leibniz form). \\
If $\mathcal{V}$ is such that
$\left<\nu,\nu'\right>=0$ for all $\nu,\nu'\in \mathcal{V}$, the
bracket $\lb\cdot,\cdot\rb$ on $TM\oplus E^*$ is antisymmetric (see
Theorem \ref{prop_2_chi}).
\subsubsection{Setting 2:} $\mathcal{V}=\Gamma(L), L\subset \widehat{E}$ a Dirac structure. This is the case studied by \cite{ChLi10,ChZhSh11} as described above. The parallels to the first setting are obvious: $\mathcal{V}$ is closed under $\lb\cdot,\cdot\rb$, which is necessary to induce an $\R$-bilinear bracket on its projection to $TM\oplus E^*$ at all, $\mathcal{V}$ is isotropic under $\left<\cdot,\cdot\right>$, so the resulting bracket is antisymmetric, and $\mathcal{V}$ is given by the sections of a vector bundle, i.e. the Leibniz rule in the first component is satisfied. \\
However, in this case there is not necessarily a splitting
$\Xi\colon F\rightarrow L$.
\subsubsection{Setting 3:} Of course the first two settings are not
mutually exclusive: According to our results, Dirac structures
$L\subset \widehat{E}$ which project surjectively to $TM\oplus E^*$
allow a lift $\Xi: TM\oplus E^*\rightarrow L$, which is natural -- a projective Lie
bracket on $TM\oplus E^*$ is in particular a Dorfman bracket.

\section{Standard examples}\label{sec:examples}
We illustrate the result in Theorem \ref{super} with the examples of
standard Dorfman brackets on $TM\oplus E^*$, by giving explicitly the
lifts.
\subsection{Lift of the Courant-Dorfman bracket}\label{lift_CD}
We consider here the case where $E=TM$ and the Dorfman bracket on 
$TM\oplus T^*M$ is the Courant-Dorfman bracket
\[ \lb (X_1,\theta_1), (X_2,\theta_2)\rb=([X_1,X_2], \ldr{X_1}\theta_2-\ip{X_2}\dr\theta_1)
\]
for $X_1,X_2\in\mx(M)$ and $\theta_1,\theta_2\in\Omega^1(M)$.  First,
recall that the derivation $\mathcal D\colon \Gamma(TM\oplus
T^*M)\times\Gamma(TM\oplus T^*M)\to \Gamma(TM\oplus T^*M)$ is 
just 
$\mathcal D_{\nu_1}\nu_2=\lb \nu_1,\nu_2\rb$.
Hence, by definition, the value $\Xi(X,\theta)(Y(m))$
  is
\[\left(T_mYX(m)-\left.\frac{d}{dt}\right\an{t=0}(Y+t[X,Y])(m), 
\dr\ell_\theta-(T_{Y(m)}p_M)^*(-\ip{Y}\dr\theta)\right).
\]
Using \eqref{explicit_hat_D}, we get $\Xi\colon \Gamma(TM\oplus
T^*M)\to\Gamma^l_{TM}(TTM\oplus T^*TM)$,
\begin{equation}\label{lift_of_CD}
\Xi(X,\theta)=\left(\widehat{[X,\cdot]}, \dr\ell_\theta-\widetilde{\dr\theta}\right),
\end{equation}
where $\widetilde{\dr\theta}$ is the one-form on $TM$ defined by
$\widetilde{\dr\theta}(v)=(T_{v}p_M)^*(-\ip{v}\dr\theta)\in T^*_v(TM)$
for all $v\in TM$.  This choice of sign is for consistency with the
notations in the next section for the general case $E$, e.g.~in the
proof of \eqref{inner1}.  We have indeed $\langle
\widetilde{\dr\theta}, \widehat{D}\rangle=\ell_{\ip{X}\dr\theta}$ for
any derivation $D$ of $TM$ over $X\in\mx(M)$, since evaluated at
$Y(m)\in TM$, $\langle \widetilde{\dr\theta},
\widehat{D}\rangle(Y(m))$ is
$\langle(T_{Y(m)}p_M)^*(-\ip{Y(m)}\dr\theta),T_mY(X(m))\rangle=\langle-\ip{Y(m)}\dr\theta,
X(m)\rangle =\ell_{\ip{X}\dr\theta}(Y(m))$.
 
\medskip

For the convenience of the reader, let us compute here explicitly the
Courant-Dorfman bracket $\left\lb \Xi(X_1,\theta_1),
  \Xi(X_2,\theta_2)\right\rb$ of two images of $\Xi$. The Lie bracket
of two linear vector fields $\widehat{D_1},\widehat{D_2}\in\mx^l(E)$
is
$[\widehat{D_1},\widehat{D_2}]=\widehat{[D_1,D_2]}=\reallywidehat{D_1\circ
  D_2-D_2\circ D_1}$. To see this, one only needs to apply
$[\widehat{D_1},\widehat{D_2}]$ on linear and pullback functions.
Since $[[X_1,\cdot], [X_2,\cdot]]$ is $[[X_1,X_2],\cdot]$ by the
Jacobi identity, we find that the Lie bracket of the vector fields
$\widehat{[X_1,\cdot]}$ and $\widehat{[X_2,\cdot]}$ is
$\reallywidehat{[[X_1,X_2],\cdot]}$.  Let us compute
$\ldr{\widehat{[X_1,\cdot]}}(\dr\ell_{\theta_2}-\widetilde{\dr\theta_2})-\ip{\widehat{[X_2,\cdot]}}
\dr(\dr\ell_{\theta_1}-\widetilde{\dr\theta_1})$.  We have $
\ldr{\widehat{[X_1,\cdot]}}\dr\ell_{\theta_2}=\dr\left(\widehat{[X_1,\cdot]}(\ell_{\theta_2})\right)=
\dr\ell_{\ldr{X_1}\theta_2}$ and
\begin{equation*}
\begin{split}
\ldr{\widehat{[X_1,\cdot]}}(\widetilde{\dr\theta_2})=\reallywidetilde{\dr(\ldr{X_1}\theta_2)}.
\end{split}
\end{equation*}
The second equation is more difficult to see and requires some
explanations.  Take $Y\in\mx(M)$. Then 
\begin{equation*}
\begin{split}
\left\langle\ldr{\widehat{[X,\cdot]}}\widetilde{\dr\theta}, \widehat{[Y,\cdot]}\right\rangle
&=\widehat{[X,\cdot]}\left\langle \widetilde{\dr\theta},\widehat{[Y,\cdot]}\right\rangle
- \left\langle \widetilde{\dr\theta}, \reallywidehat{[[X,Y],\cdot]}\right\rangle\\
&=\widehat{[X,\cdot]}\ell_{\ip{Y}\dr\theta}-\ell_{\ip{[X,Y]}\dr\theta}=\ell_{\ldr{X}\ip{Y}\dr\theta-\ip{[X,Y]}\dr\theta}\\
&= \ell_{\ip{Y}\dr\ldr{X}\theta}=\left\langle\widetilde{\dr\ldr{X}\theta}, \widehat{[Y,\cdot]}\right\rangle.
\end{split}
\end{equation*}
Since $\left\langle\ldr{\widehat{[X,\cdot]}}\widetilde{\dr\theta},
  Y^\uparrow\right\rangle$ is easily seen to vanish, as
$\left\langle\widetilde{\dr\ldr{X}\theta},
  Y^\uparrow\right\rangle$ does, we find that
$\ldr{\widehat{[X,\cdot]}}\widetilde{\dr\theta}=\widetilde{\dr\ldr{X}\theta}$.
Therefore we get
\begin{equation*}
\begin{split}
&\ldr{\widehat{[X_1,\cdot]}}(\dr\ell_{\theta_2}-\widetilde{\dr\theta_2})-\ip{\widehat{[X_2,\cdot]}}
\dr(\dr\ell_{\theta_1}-\widetilde{\dr\theta_1})\\
=&\ldr{\widehat{[X_1,\cdot]}}(\dr\ell_{\theta_2}-\widetilde{\dr\theta_2})+\ldr{\widehat{[X_2,\cdot]}}
\widetilde{\dr\theta_1}-\dr(\ip{\widehat{[X_2,\cdot]}}\widetilde{\dr\theta_1})\\
=&\dr\ell_{\ldr{X_1}\theta_2}-\widetilde{\dr(\ldr{X_1}\theta_2)}+\widetilde{\dr(\ldr{X_2}\theta_1)}
-\dr\langle\widetilde{\dr\theta_1}, \widehat{[X_2,\cdot]}\rangle\\
=&\dr\ell_{\ldr{X_1}\theta_2}-\widetilde{\dr(\ldr{X_1}\theta_2)}+\widetilde{\dr(\ldr{X_2}\theta_1)}
-\dr\ell_{\ip{X_2}\dr\theta_1}.
\end{split}
\end{equation*} Since
$\dr(\ldr{X_2}\theta_1)=\dr(\ip{X_2}\dr\theta_1)$, this shows
\begin{equation*}
\begin{split}
  \left\lb \Xi(X_1,\theta_1),
    \Xi(X_2,\theta_2)\right\rb&=\left(\reallywidehat{[[X_1,X_2],\cdot]},
    \dr\ell_{\ldr{X_1}\theta_2-\ip{X_2}\dr\theta_1}- \reallywidetilde{\dr(\ldr{X_1}\theta_2-\ip{X_2}\dr\theta_1)}\right)\\
&=\Xi\lb (X_1,\theta_1), (X_2,\theta_2)\rb.
\end{split}
\end{equation*} 

\begin{remark} 
There is a canonical isomorphism of double vector bundles 
\[ \Sigma: T(TM\oplus T^*M) \rightarrow TTM\oplus T^*TM, \]
which maps the natural VB-Courant algebroid structure on
$T(TM\oplus T^*M)$, the tangent prolongation of the standard Courant
algebroid on $TM\oplus T^*M$, to the standard VB-Courant algebroid
structure on $T(TM)\oplus T^*(TM)$. The lift $\Xi$ is then precisely
$\Xi=\Sigma \circ T$,
where $T$ denotes the tangent prolongation of a section, 
\[ (s\colon M\rightarrow TM\oplus T^*M)\mapsto(Ts:TM\rightarrow T(TM\oplus T^*M)). \]  
A precise description and proof can be found in \cite{JoStXu16}. 
\end{remark} 

\subsection{Another lift to $TTM\oplus T^*TM$}
Consider this time the natural lift $\Xi\colon\Gamma(TM\oplus
T^*M)\to\Gamma_{TM}^l(T(TM)\oplus T^*(TM))$,
$\Xi(X,\theta)=\left(\widehat{[X,\cdot]},\dr\ell_\theta\right)$.  This
is equivalent to the Dorfman bracket
$\lb\cdot\,,\cdot\rb\colon\Gamma(TM\oplus T^*M)\times\Gamma(TM\oplus
T^*M)\to \Gamma(TM\oplus T^*M)$,
\[ \lb (X_1,\theta_1), (X_2, \theta_2)\rb=([X_1,X_2],\ldr{X_1}\theta_2).
\]

To see this, let us compute the Courant-Dorfman bracket of
$\Xi(X_1,\theta_1)$ with $\Xi(X_2,\theta_2)$.  We have
\begin{equation*}
  \left\lb \Xi(X_1,\theta_1), \Xi(X_2,\theta_2)\right\rb=\left(\left[\widehat{[X_1,\cdot]}, \widehat{[X_2,\cdot]}\right],
    \ldr{\widehat{[X_1,\cdot]}}\dr\ell_{\theta_2}-\ip{\widehat{[X_2,\cdot]}}\dr^2\ell_{\theta_1}\right).
\end{equation*}
By the formulas found in the preceding example, we get
\begin{equation}
\left\lb \Xi(X_1,\theta_1), \Xi(X_2,\theta_2)\right\rb=\left(\reallywidehat{[[X_1,X_2],\cdot]},\dr\ell_{\ldr{X_1}\theta_2}\right)
=\Xi([X_1,X_2], \ldr{X_1}\theta_2).
\end{equation}
In fact, we call the lifts associated to Dorfman brackets \enquote{natural} because
they generalise the properties of this lift.

\subsection{More general examples}
More generally, according to \eqref{general_lift} the lift
corresponding to the Dorfman bracket in Example \ref{ex:forms} has the
same form:
\begin{align} 
  \Xi((X,\alpha))(e_m) &=\left( (T_m e)(X(m)) - (\ldr{X} e)^{\uparrow}(e_m), \dr \ell_{\alpha} (e_m) -  \widetilde{\dr \alpha} \right) \nonumber \\
  &= \left(\widehat{\ldr{X} \cdot}, \dr \ell_{\alpha}
    - \widetilde{\dr \alpha} \right)(e_m)
\end{align} 
for all $e_m \in \wedge^k TM$, where, in the second equality, we have
used the definition of the derivation $\widehat{D}$ in
\eqref{explicit_hat_D}.  Here in order to be consistent with the next
section, as well as the previous example, $\widetilde{\dr \alpha}$ is
defined by:
\begin{equation*} 
  \widetilde{\dr \alpha}(e_m)= (T_{e_m} \pr_{TM})^* ((-1)^k \ip{e_m} \dr \alpha) = (T_{e_m} \pr_{TM})^* ( \dr \alpha(\cdot,e_m)).
\end{equation*} 
\medskip

In all examples so far, the lift $\Xi\colon \Gamma(TM \oplus E^*)
\rightarrow \Gamma^l_E (TE \oplus T^*E)$ is really a direct sum of two
lifts $ \Xi_{TM} \colon \mx(M) \rightarrow \Gamma^l_E(TE)$ and
$\Xi_{E^*}\colon \Gamma(E^*) \rightarrow \Gamma^l_E(T^*E)$.  All the
examples discussed so far are split Dorfman brackets. For these, we
always have:
\begin{proposition} 
For all split Dorfman brackets on $TM\oplus E^*$, $\Xi(X,0) \in \mx^l(E)$. 
\end{proposition}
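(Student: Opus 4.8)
The plan is to show that, for a split Dorfman bracket, the cotangent component of $\Xi(X,0)$ vanishes identically, so that $\Xi(X,0)$ reduces to a section of $TE\subset TE\oplus T^*E$, i.e.~a linear vector field in $\mx^l(E)=\Gamma_E^l(TE)$. By \eqref{general_lift} the lift is
\[
\Xi(X,0)=\left(\widehat{\delta_{(X,0)}},\dr\ell_0-\reallywidetilde{\pr_{T^*M}\mathcal D_{(X,0)}\circ\iota_E}\right),
\]
and since $\ell_0=0$ we have $\dr\ell_0=0$. Hence it suffices to prove that the vector bundle morphism $\phi_{(X,0)}:=\pr_{T^*M}\circ\mathcal D_{(X,0)}\circ\iota_E\colon E\to T^*M$ is zero; then the core-linear term $\reallywidetilde{\phi_{(X,0)}}$ disappears and $\Xi(X,0)=(\widehat{\delta_{(X,0)}},0)\in\mx^l(E)$, recalling that $\mx^l(E)$ embeds into $\Gamma_E^l(TE\oplus T^*E)$ precisely as the linear sections with trivial cotangent part.

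Next I would extract $\phi_{(X,0)}$ from the defining relation of $\mathcal D$,
\[
X\langle \nu,\tau\rangle=\langle \nu,\mathcal D_{(X,0)}\tau\rangle+\langle \lb(X,0),\nu\rb,\tau\rangle,
\]
tested on $\nu=(Y,0)\in\Gamma(TM\oplus E^*)$ with $Y\in\mx(M)$ and on $\tau=(e,0)=\iota_E(e)$ with $e\in\Gamma(E)$. Under the canonical pairing $\langle(Y,\eta),(e,\theta)\rangle=\eta(e)+\theta(Y)$ between $TM\oplus E^*$ and $E\oplus T^*M$, the left-hand side is $X\langle(Y,0),(e,0)\rangle=0$; and writing $\mathcal D_{(X,0)}(e,0)=(\delta_{(X,0)}e,\phi_{(X,0)}(e))$, the first term on the right equals $\langle(Y,0),(\delta_{(X,0)}e,\phi_{(X,0)}(e))\rangle=\phi_{(X,0)}(e)(Y)$.

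The split hypothesis enters only through the last term: being split, the bracket satisfies $\lb(X,0),(Y,0)\rb=([X,Y],0)$, so its $E^*$-component vanishes and $\langle\lb(X,0),(Y,0)\rb,(e,0)\rangle=0$. The relation therefore collapses to $\phi_{(X,0)}(e)(Y)=0$ for all $Y\in\mx(M)$ and all $e\in\Gamma(E)$, forcing $\phi_{(X,0)}=0$ and finishing the argument. I do not anticipate a real obstacle: once the pairing conventions are fixed the computation is purely linear-algebraic, and the genuine content is merely the observation that isolating the $E^*$-component of $\lb(X,0),(Y,0)\rb$ is exactly what the split condition controls (the $TM$-component is automatically $[X,Y]$ by the anchor compatibility $\rho\lb\cdot,\cdot\rb=[\rho\cdot,\rho\cdot]$). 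The only care needed is bookkeeping with the four projections and keeping track of which slot of the pairing is fed which factor.
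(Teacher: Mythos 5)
Your proof is correct and follows essentially the same route as the paper: both test the defining relation of $\mathcal D$ on $\nu=(Y,0)$ and $\tau=(e,0)$, use the split hypothesis to kill the term $\langle\lb(X,0),(Y,0)\rb,(e,0)\rangle=\langle([X,Y],0),(e,0)\rangle=0$, and conclude that the $T^*M$-component of $\mathcal D_{(X,0)}(e,0)$ vanishes. The only difference is that you spell out explicitly how this, together with $\dr\ell_0=0$, forces the cotangent part of $\Xi(X,0)$ in \eqref{general_lift} to vanish, a step the paper leaves implicit.
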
 
\begin{proof} 
  We show that $\mathcal{D}_{(X,0)}(e,0)=(\delta_{(X,0)}e, 0)$:
\begin{align*} 
  \left\langle\mathcal{D}_{(X,0)}(e,0), (Y,0) \right\rangle &= X\left\langle(Y,0),(e,0)\right\rangle - \left\langle\lb (X,0),(Y,0) \rb, (e,0) \right\rangle \\
  &= - \left\langle ([X,Y],0),(e,0) \right\rangle = 0
\end{align*} 
for all $Y\in\mx(M)$.
\end{proof} 

However, for general split Dorfman brackets $\Xi_{E^*}$ is a map
$\Gamma(E^*) \rightarrow \Gamma^l_e(TE\oplus T^*E)$. For example the
term in \eqref{equ:mixterm} gives rise to a term $(e \neg \dr
\alpha_k)^{\uparrow}(e_m)\in \Gamma(TE)$ in
$\Xi_{E^*}(\alpha_k)(e_m)$.
If the Dorfman bracket is \emph{not} split, mixing can also occur in
the $TM$-part of the lift: $\Xi_{TM}\colon TM \rightarrow
\Gamma^l_E(TE\oplus T^*E)$, as illustrated by the following example:
\begin{example} 
  Let $H \in \Omega^3_{\rm cl}(M)$ be a closed $3$-form. Then $\lb
  (X_1,\theta_1), (X_2,\theta_2) \rb_H = \lb
  (X_1,\theta_1),(X_2,\theta_2) \rb +(0,\ip{X_2} \ip{X_1}H)$ (with
  $\lb \cdot,\cdot \rb$ the Courant-Dorfman bracket) is also a Dorfman
  bracket on $TM \oplus T^*M$. This Dorfman bracket is not split, and
  we have $\mathcal D^H_{(X,0)}(Y,0)=([X,Y], \ip{Y}\ip{X}H)$ by
  Example \ref{CD_ex}, which shows
\begin{equation*} 
\Xi^H(X,0)(Y(m)) = (\Xi_{TM}(X), -p_M^*(\ip{Y}\ip{X} H))(Y(m)).
\end{equation*} 
The following section studies in detail such \emph{twistings} of
Dorfman brackets in relation to their lifts.
\end{example}

\section{Twisted Courant-Dorfman bracket over vector bundles.} \label{sec:twists}
Here we consider the standard Courant-Dorfman bracket on $TE\oplus
T^*E$ over a vector bundle $E$, twisted by a linear closed 3-form
$H\in\Omega^3(E)$.  That is, we have
\[ \lb (X_1,\alpha_1), (X_2,\alpha_2)\rb_H=\lb (X_1,\alpha_1), (X_2,\alpha_2)\rb +(0, \ip{X_2}\ip{X_1}H).
\]

Given a form $\mu\in\Omega^2(M,E^*)$ and a Dorfman bracket
$\lb\cdot\,,\cdot\rb$ on sections of $TM\oplus E^*$, we can define a
twisted bracket $\lb\cdot\,,\cdot\rb_\mu\colon \Gamma(TM\oplus
E^*)\times\Gamma(TM\oplus E^*)\to \Gamma(TM\oplus E^*)$ by
\[ \lb (X_1,\epsilon_1), (X_2, \epsilon_2)\rb_\mu=\lb (X_1,\epsilon_1), (X_2, \epsilon_2)\rb+(0, \ip{X_2}\ip{X_1}\mu).
\]
This satisfies a Leiniz equality in the second term (as always, with
anchor $\pr_{TM}$) and is compatible with the anchor.  We make the
following definition.
\begin{definition}
  Let $\lb \cdot\,,\cdot\rb\colon\Gamma(TM\oplus E^*)\times
  \Gamma(TM\oplus E^*)\to \Gamma(TM\oplus E^*)$ be a Dorfman bracket
  and $\mu\in\Omega^2(M,E^*)$ a form. Then we say that \textbf{$\mu$
    twists $\lb \cdot\,,\cdot\rb$} if $\lb \cdot\,,\cdot\rb_\mu$
  satisfies the Jacobi identity in Leibniz form, i.e.~if $\lb
  \cdot\,,\cdot\rb_\mu$ is a new Dorfman bracket.
\end{definition}
In this section we will describe in terms of the lift associated to
$\lb\cdot\,,\cdot\rb$ a necessary and sufficient condition for $\mu$
to twist $\lb \cdot\,,\cdot\rb$.  

\begin{example} 
  The standard Dorfman bracket on $TM \oplus \wedge^k T^*M$ (Example
  \ref{ex:forms}) is twisted by $\mu \in \Omega^2(M, \wedge^k T^*M)$
  if and only if $\mu \in \Omega^{k+2}_{\rm cl}(M)$, i.e. actually
  antisymmetric in all components and closed.
\end{example} 

We define the dual derivation
$\mathcal D^\mu\colon \Gamma(TM\oplus E^*)\times\Gamma(E\oplus T^*M)\to \Gamma(E\oplus T^*M)$
to $\lb \cdot\,,\cdot\rb_\mu$
and find 
\begin{equation}\label{twist_D}
\mathcal D^\mu_{(X,\epsilon)}(e,\theta)=\mathcal D_{(X,\epsilon)}(e,\theta)-(0, \langle\ip{X}\mu, e\rangle).
\end{equation}
The corresponding lift $\Xi^\mu\colon \Gamma(TM\oplus E^*)\to \Gamma^l_E(TE\oplus T^*E)$ as in \eqref{general_lift}
is then just
\[\Xi^\mu(X,\epsilon)=\Xi(X,\epsilon)+\widetilde{(0, \ip{X}\mu)}.
\]
Recall that it is natural if and only if $\lb \cdot\,,\cdot\rb_\mu$
satisfies the Jacobi identity.

\begin{theorem}\label{twist1}
With the notations above, we have
\[ \left\lb\Xi^\mu(\nu_1), \Xi^\mu(\nu_2)\right\rb_{-\dr\Lambda_\mu}=\Xi^\mu\lb\nu_1,\nu_2\rb
\]
for all $\nu_1,\nu_2\in\Gamma(TM\oplus E^*)$.
\end{theorem}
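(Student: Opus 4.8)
The plan is to expand both sides in terms of the untwisted lift $\Xi$ and the core-linear correction, and to show that all the correction terms conspire to produce exactly the single term appearing in $\Xi^\mu\lb\nu_1,\nu_2\rb$. Write $\nu_i=(X_i,\varepsilon_i)$ and $\varphi_i=(0,\ip{X_i}\mu)\in\Gamma(\Hom(E,E\oplus T^*M))$, so that $\Xi^\mu(\nu_i)=\Xi(\nu_i)+\widetilde{\varphi_i}$. First I would unfold the left-hand side using the definition of the twisted Courant-Dorfman bracket, splitting it into the untwisted bracket $\lb\Xi^\mu(\nu_1),\Xi^\mu(\nu_2)\rb$ plus the twisting term $(0,\ip{\pr_{TE}\Xi^\mu(\nu_2)}\ip{\pr_{TE}\Xi^\mu(\nu_1)}(-\dr\Lambda_\mu))$. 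Since $\widetilde{\varphi_i}$ is a core-linear section of $T^*E$ with vanishing $TE$-part, the anchor is unchanged: $\pr_{TE}\Xi^\mu(\nu_i)=\widehat{\delta_{\nu_i}}$ by Theorem \ref{main2}(2), and \eqref{inner1} (applied with $D_i=\delta_{\nu_i}$) evaluates the twisting term explicitly.

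Next I would expand the untwisted bracket bilinearly into four pieces. The piece $\lb\Xi(\nu_1),\Xi(\nu_2)\rb$ equals $\Xi\lb\nu_1,\nu_2\rb$ by naturality of $\Xi$ (Theorem \ref{super}, since $\lb\cdot\,,\cdot\rb$ is a Dorfman bracket). The mixed piece $\lb\Xi(\nu_1),\widetilde{\varphi_2}\rb$ is $\widetilde{\mathcal D_{\nu_1}\varphi_2}$ by Corollary \ref{new_lemma}. For the piece $\lb\widetilde{\varphi_1},\Xi(\nu_2)\rb$, with the core-linear section in the first slot, I would use the symmetry axiom $\lb e_1,e_2\rb+\lb e_2,e_1\rb=\mathcal D\langle e_1,e_2\rangle$ together with $\mathcal D f=(0,\dr f)$; combined with the pairing $\langle\widetilde{\varphi_1},\Xi(\nu_2)\rangle=\ell_{\ip{X_2}\ip{X_1}\mu}$ (computed from the core-linear/linear pairing $\langle\widetilde\phi,\widehat D\rangle=\ell_{\phi(X)}$) this yields $\lb\widetilde{\varphi_1},\Xi(\nu_2)\rb=(0,\dr\ell_{\ip{X_2}\ip{X_1}\mu})-\widetilde{\mathcal D_{\nu_2}\varphi_1}$. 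The last piece $\lb\widetilde{\varphi_1},\widetilde{\varphi_2}\rb$ vanishes, being the Courant-Dorfman bracket of two $1$-forms on $E$.

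Collecting everything, the two occurrences of $\dr\ell_{\ip{X_2}\ip{X_1}\mu}$ cancel against the matching term in the twisting contribution, and what remains is $\Xi\lb\nu_1,\nu_2\rb$ plus a sum of core-linear sections. The crux is then the identity $\mathcal D_{\nu_1}\varphi_2=(0,\delta_{\nu_1}(\ip{X_2}\mu))$ (and its $1\leftrightarrow 2$ analogue), which I expect to be the main obstacle: it forces one to reconcile the derivation $\mathcal D_{\nu_1}$ of $E\oplus T^*M$ induced on $\Hom(E,E\oplus T^*M)$ with the induced derivation $\delta_{\nu_1}$ of $\Omega^1(M,E^*)$ used in \eqref{inner1}. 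I would verify it by evaluating on $e\in\Gamma(E)$ and $Y\in\mx(M)$: by Theorem \ref{linear_are_der_thm}(1) the derivation $\mathcal D_{\nu_1}$ acts as $\ldr{X_1}$ on the $T^*M$-component, and the dual-derivation relation $X_1\langle\varepsilon,e\rangle=\langle\delta_{\nu_1}^*\varepsilon,e\rangle+\langle\varepsilon,\delta_{\nu_1}e\rangle$ turns $\ldr{X_1}\langle\ip{X_2}\mu,e\rangle-\langle\ip{X_2}\mu,\delta_{\nu_1}e\rangle$ into $\langle\delta_{\nu_1}(\ip{X_2}\mu),e\rangle$. Granting this, the four surviving core-linear terms $\widetilde{\mathcal D_{\nu_1}\varphi_2}-\widetilde{\mathcal D_{\nu_2}\varphi_1}-\widetilde{(0,\delta_{\nu_1}(\ip{X_2}\mu))}+\widetilde{(0,\delta_{\nu_2}(\ip{X_1}\mu))}$ cancel in pairs, leaving only $\widetilde{(0,\ip{[X_1,X_2]}\mu)}$. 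Since $\lb\nu_1,\nu_2\rb$ is anchored by $[X_1,X_2]$, this is precisely the correction making $\Xi\lb\nu_1,\nu_2\rb+\widetilde{(0,\ip{[X_1,X_2]}\mu)}=\Xi^\mu\lb\nu_1,\nu_2\rb$, which completes the proof.
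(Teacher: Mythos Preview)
Your proposal is correct and follows essentially the same route as the paper: expand $\Xi^\mu(\nu_i)=\Xi(\nu_i)+\widetilde{\varphi_i}$, split the twisted bracket bilinearly, use Theorem~\ref{super} for the pure $\Xi$-piece, Corollary~\ref{new_lemma} for the mixed pieces, and \eqref{inner1} for the twist term, then cancel. The paper's proof is terser in that it silently identifies $\mathcal D_{\nu_1}\varphi_2$ with $(0,\delta_{\nu_1}(\ip{X_2}\mu))$ by using the same symbol for both derivations, and it omits the vanishing of $\lb\widetilde{\varphi_1},\widetilde{\varphi_2}\rb$; you make both points explicit, which is an improvement rather than a deviation.
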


\begin{proof}
We just compute
\begin{align*}
&\left\lb \Xi^\mu(\nu_1), \Xi^\mu(\nu_2)\right\rb_{-\dr\Lambda_\mu}
=\left\lb \Xi(\nu_1)+ \widetilde{(0, \ip{X_1}\mu)}, \Xi(\nu_2)+\widetilde{(0, \ip{X_2}\mu)}\right\rb_{-\dr\Lambda_\mu}\\
\overset{\eqref{inner1}}{=}&\left\lb \Xi(\nu_1), \Xi(\nu_2)\right\rb-\left(0, \dr\ell_{\ip{X_2}\ip{X_1}\mu}+\widetilde{\mathcal D_{\nu_1}(\ip{X_2}\mu)}-\widetilde{\mathcal D_{\nu_2}(\ip{X_1}\mu)}-\widetilde{\ip{[X_1,X_2]}\mu}\right)\\
&+\left\lb \Xi(\nu_1), \widetilde{(0, \ip{X_2}\mu)}\right\rb
+\left\lb\widetilde{(0, \ip{X_1}\mu)}, \Xi(\nu_2)\right\rb\\
=\,&\,\Xi\lb\nu_1, \nu_2\rb-\left(0, \dr\ell_{\ip{X_2}\ip{X_1}\mu}+\widetilde{\mathcal D_{\nu_1}(\ip{X_2}\mu)}-\widetilde{\mathcal D_{\nu_2}(\ip{X_1}\mu)}-\widetilde{\ip{[X_1,X_2]}\mu}\right)\\
&+\left(0, \widetilde{\mathcal D_{\nu_1}(\ip{X_2}\mu)}\right)+\left(0, \dr\ell_{\ip{X_2}\ip{X_1}\mu}\right)
-\left(0, \widetilde{\mathcal D_{\nu_2}(\ip{X_1}\mu)}\right)\\
=\,&\,\Xi^\mu\lb\nu_1, \nu_2\rb\qedhere
\end{align*}
\end{proof}

In the third equality, we have used 
Lemma \ref{new_lemma}. 
We are now ready to prove our main theorem.
\begin{theorem} \label{twist2}
Consider a Dorfman bracket
\[\lb \cdot\,,\cdot\rb\colon\Gamma(TM\oplus E^*)\times \Gamma(TM\oplus E^*)\to \Gamma(TM\oplus E^*)
\]
and the corresponding lift $\Xi\colon\Gamma(TM\oplus T^*M)\to \Gamma^l_{TM}(TE\oplus T^*E)$.

Then a form $\mu\in\Omega^2(M,E^*)$ twists $\lb \cdot\,,\cdot\rb$  if and only if
\begin{equation*}
  \left\lb \Xi(\nu_1), \Xi(\nu_2)\right\rb_{\dr\Lambda_\mu}=\Xi\lb \nu_1,\nu_2\rb_\mu
\end{equation*}
for all $\nu_1, \nu_2\in\Gamma(TM\oplus TM^*)$.
\end{theorem}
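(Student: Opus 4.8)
My plan is to reduce the claimed identity (call it $(\star)$) to the naturality of the lift $\Xi^\mu$ associated with the twisted bracket $\lb\cdot\,,\cdot\rb_\mu$, and then to apply Theorem \ref{super} to $\lb\cdot\,,\cdot\rb_\mu$. Since $\lb\cdot\,,\cdot\rb_\mu$ is $\R$-bilinear, anchored by $\pr_{TM}$, and satisfies the Leibniz rule in its second argument, Theorem \ref{super} tells us that $\mu$ twists $\lb\cdot\,,\cdot\rb$ --- that is, $\lb\cdot\,,\cdot\rb_\mu$ is a Dorfman bracket --- if and only if $\lb\Xi^\mu(\nu_1),\Xi^\mu(\nu_2)\rb=\Xi^\mu\lb\nu_1,\nu_2\rb_\mu$ for all $\nu_1,\nu_2$, the bracket on the left being the untwisted Courant--Dorfman bracket. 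Hence it suffices to show that this naturality of $\Xi^\mu$ is equivalent to $(\star)$.

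Writing $X_i=\pr_{TM}\nu_i$ and using $\pr_{TE}\Xi^\mu(\nu_i)=\pr_{TE}\Xi(\nu_i)=\widehat{\delta_{\nu_i}}$ --- the added core-linear section $\widetilde{(0,\ip{X_i}\mu)}$ carries no $TE$-component --- the definition of the twisted Courant--Dorfman bracket gives the two expansions
\[
\lb\Xi^\mu(\nu_1),\Xi^\mu(\nu_2)\rb_{-\dr\Lambda_\mu}=\lb\Xi^\mu(\nu_1),\Xi^\mu(\nu_2)\rb-\bigl(0,\ip{\widehat{\delta_{\nu_2}}}\ip{\widehat{\delta_{\nu_1}}}\dr\Lambda_\mu\bigr)
\]
and
\[
\lb\Xi(\nu_1),\Xi(\nu_2)\rb_{\dr\Lambda_\mu}=\lb\Xi(\nu_1),\Xi(\nu_2)\rb+\bigl(0,\ip{\widehat{\delta_{\nu_2}}}\ip{\widehat{\delta_{\nu_1}}}\dr\Lambda_\mu\bigr).
\]
Substituting Theorem \ref{twist1} into the first and the naturality $\lb\Xi(\nu_1),\Xi(\nu_2)\rb=\Xi\lb\nu_1,\nu_2\rb$ of $\Xi$ (valid because $\lb\cdot\,,\cdot\rb$ is Dorfman, by Theorem \ref{super}) into the second, and eliminating the common twist term between them, I obtain
\[
\lb\Xi(\nu_1),\Xi(\nu_2)\rb_{\dr\Lambda_\mu}=\lb\Xi^\mu(\nu_1),\Xi^\mu(\nu_2)\rb+\Xi\lb\nu_1,\nu_2\rb-\Xi^\mu\lb\nu_1,\nu_2\rb.
\]
No explicit evaluation of $\ip{\widehat{\delta_{\nu_2}}}\ip{\widehat{\delta_{\nu_1}}}\dr\Lambda_\mu$ via \eqref{inner1} is required; the term cancels formally.

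Finally I would simplify the corrections using $\Xi^\mu(\nu)=\Xi(\nu)+\widetilde{(0,\ip{\pr_{TM}\nu}\mu)}$ together with $\pr_{TM}\lb\nu_1,\nu_2\rb_\mu=\pr_{TM}\lb\nu_1,\nu_2\rb=[X_1,X_2]$ (the $\mu$-correction takes values in $E^*$, leaving the $TM$-part unchanged). These give $\Xi\lb\nu_1,\nu_2\rb-\Xi^\mu\lb\nu_1,\nu_2\rb=-\widetilde{(0,\ip{[X_1,X_2]}\mu)}$ and $\Xi\lb\nu_1,\nu_2\rb_\mu=\Xi^\mu\lb\nu_1,\nu_2\rb_\mu-\widetilde{(0,\ip{[X_1,X_2]}\mu)}$. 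The first turns the previous relation into $\lb\Xi(\nu_1),\Xi(\nu_2)\rb_{\dr\Lambda_\mu}=\lb\Xi^\mu(\nu_1),\Xi^\mu(\nu_2)\rb-\widetilde{(0,\ip{[X_1,X_2]}\mu)}$, while the second rewrites the right-hand side of $(\star)$ as $\Xi\lb\nu_1,\nu_2\rb_\mu=\Xi^\mu\lb\nu_1,\nu_2\rb_\mu-\widetilde{(0,\ip{[X_1,X_2]}\mu)}$. The identical core-linear terms cancel, so $(\star)$ is equivalent to $\lb\Xi^\mu(\nu_1),\Xi^\mu(\nu_2)\rb=\Xi^\mu\lb\nu_1,\nu_2\rb_\mu$, i.e.\ to the naturality of $\Xi^\mu$, which by Theorem \ref{super} holds for all $\nu_1,\nu_2$ exactly when $\mu$ twists $\lb\cdot\,,\cdot\rb$. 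This settles both implications at once.

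The argument is essentially formal once Theorem \ref{twist1} is in hand, so I anticipate no genuine obstacle; the only care needed is bookkeeping --- checking that $\pr_{TE}\Xi^\mu=\pr_{TE}\Xi=\widehat{\delta}$ so that both twists contribute the same term, and tracking the core-linear corrections $\widetilde{(0,\ip{[X_1,X_2]}\mu)}$ to see them cancel on both sides. Conceptually, the content is that twisting $\lb\cdot\,,\cdot\rb$ on $TM\oplus E^*$ by $\mu$ corresponds, after lifting, to twisting the ambient Courant--Dorfman bracket on $TE\oplus T^*E$ by the closed linear $3$-form $\dr\Lambda_\mu$, which is precisely the bridge supplied by Theorem \ref{twist1}.
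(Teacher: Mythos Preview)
Your proof is correct and uses the same two ingredients as the paper---Theorem \ref{twist1} and Theorem \ref{super}---but organizes them more cleanly. The paper's argument assumes $\lb\cdot\,,\cdot\rb_\mu$ is Dorfman, applies Theorem \ref{super} to get naturality of $\Xi^\mu$, then combines this with Theorem \ref{twist1} to extract the intermediate identity $\Xi(0,\mu(X_1,X_2))=(0,\ip{\widehat{\delta_{\nu_2}}}\ip{\widehat{\delta_{\nu_1}}}\dr\Lambda_\mu)$, from which $(\star)$ follows; only this forward implication is written out explicitly. You instead establish unconditionally that $(\star)$ is \emph{equivalent} to naturality of $\Xi^\mu$, by cancelling the common twist term $(0,\ip{\widehat{\delta_{\nu_2}}}\ip{\widehat{\delta_{\nu_1}}}\dr\Lambda_\mu)$ formally rather than identifying it, and then invoke Theorem \ref{super} once at the end. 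This buys you both directions simultaneously and avoids the intermediate identity altogether; it is a tidier packaging of the same content.
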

In other words, \emph{$\mu$ twists a Dorfman bracket if and only its
  natural lift lifts the twisted bracket to the twist by
  $\dr\Lambda_\mu$ of the Courant-Dorfman bracket.}

Note that we also
have the following result, which follows from \eqref{inner2} and \eqref{twist_D}. 
\begin{proposition}\label{prop_linear_core_twist}
  In the situation of the previous theorem, we have
\begin{equation*}
\left\lb\Xi(\nu), \tau^\uparrow\right\rb_{\dr\Lambda_\mu}=\mathcal D_\nu^\mu\tau^\uparrow,
\end{equation*}
for $\nu\in\Gamma(TM\oplus E^*)$ and $\tau\in\Gamma(E\oplus T^*M)$,
no matter if $\mu$ twists the Dorfman bracket or not.
\end{proposition}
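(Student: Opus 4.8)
The plan is to expand the twisted bracket from its definition and then recognise each summand using the identities already proved. Writing $\nu=(X,\epsilon)$ and $\tau=(e,\theta)$, the definition of the twist by $\dr\Lambda_\mu$ gives
\[
\lb \Xi(\nu),\tau^\uparrow\rb_{\dr\Lambda_\mu}
=\lb \Xi(\nu),\tau^\uparrow\rb
+\left(0,\;\ip{\pr_{TE}(\tau^\uparrow)}\ip{\pr_{TE}(\Xi(\nu))}\dr\Lambda_\mu\right),
\]
so the task reduces to identifying the two terms on the right.

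For the first term I would invoke Theorem \ref{main2}(3) — equivalently Theorem \ref{prop_1_chi} applied to $\chi=\Xi(\nu)$, for which $D_\chi=\mathcal D_\nu$ by the very definition of $\mathcal D$ — to get $\lb \Xi(\nu),\tau^\uparrow\rb=\mathcal D_\nu\tau^\uparrow=(\mathcal D_\nu\tau)^\uparrow$. For the twist term I would first replace the two anchors using Theorem \ref{main2}(2): $\pr_{TE}(\Xi(\nu))=\widehat{\delta_\nu}$ is a linear vector field over $X=\pr_{TM}\nu$, and $\pr_{TE}(\tau^\uparrow)=(\pr_E\tau)^\uparrow=e^\uparrow$. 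The twist term then becomes exactly $\ip{e^\uparrow}\ip{\widehat{\delta_\nu}}\dr\Lambda_\mu$, which is evaluated in \eqref{inner2} to be $-q_E^*\langle\ip{X}\mu,e\rangle$.

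The one genuinely delicate step is reinterpreting this pullback $1$-form as a vertical section. Here I would appeal directly to the definition \eqref{def_of_vert_pair}: the cotangent component of $(0,\langle\ip{X}\mu,e\rangle)^\uparrow$ at a point $e'_m$ is $(T_{e'_m}q_E)^t\bigl(\langle\ip{X}\mu,e\rangle(m)\bigr)$, which is precisely the pullback $q_E^*\langle\ip{X}\mu,e\rangle$. Hence the twist term equals $-(0,\langle\ip{X}\mu,e\rangle)^\uparrow$, and since $(\cdot)^\uparrow$ is additive I can combine the two summands into
\[
\lb \Xi(\nu),\tau^\uparrow\rb_{\dr\Lambda_\mu}
=\left(\mathcal D_\nu\tau-(0,\langle\ip{X}\mu,e\rangle)\right)^\uparrow.
\]

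Finally I would read off from \eqref{twist_D} that the expression inside the lift is exactly $\mathcal D^\mu_\nu\tau$, so the right-hand side is $\mathcal D^\mu_\nu\tau^\uparrow$, which is the assertion. I expect the main obstacle to be purely one of bookkeeping: matching the sign and the order of the two interior products in \eqref{inner2} against the twist convention $(0,\ip{X_2}\ip{X_1}H)$, and verifying that the $T^*M$-valued datum $\langle\ip{X}\mu,e\rangle$ is encoded as the pullback $1$-form dictated by \eqref{def_of_vert_pair}. Note that nothing in this chain uses the Jacobi identity or the hypothesis that $\mu$ twists $\lb\cdot\,,\cdot\rb$; the computation of $\lb\Xi(\nu),\tau^\uparrow\rb_{\dr\Lambda_\mu}$ goes through for an arbitrary $\mu\in\Omega^2(M,E^*)$, which accounts for the final clause of the statement.
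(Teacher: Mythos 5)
Your argument is correct and is exactly the route the paper takes: the paper states that the proposition ``follows from \eqref{inner2} and \eqref{twist_D}'', and your proof simply fleshes out that one-line justification, correctly matching the sign in \eqref{inner2} with the twist convention and identifying the pullback $1$-form $q_E^*\langle\ip{X}\mu,e\rangle$ with the cotangent component of the vertical lift via \eqref{def_of_vert_pair}. Your closing observation that no Jacobi identity is used is precisely why the statement holds ``no matter if $\mu$ twists the Dorfman bracket or not''.
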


\begin{proof}[Proof of Theorem \ref{twist2}]
Assume that $\lb \cdot\,,\cdot\rb_\mu$ is a Dorfman bracket. Then by Theorem \ref{super}, we have 
\begin{equation}\label{eq1}
 \lb \Xi^\mu(\nu_1), \Xi^\mu(\nu_2)\rb=\Xi^\mu\lb \nu_1, \nu_2\rb_\mu
=\Xi^\mu\lb \nu_1, \nu_2\rb+\Xi^\mu(0,\mu(X_1,X_2)).
\end{equation}
Since $\Xi^\mu(\nu)=\Xi(\nu)+\widetilde{(0,\ip{X}\mu)}$, we find that
\begin{equation}\label{eq1,5}\Xi^\mu(0,\mu(X_1,X_2))=\Xi(0,\mu(X_1,X_2))
\end{equation} and also that
$\pr_{TE}\Xi^\mu(\nu)=\pr_{TE}\Xi(\nu)=\widehat{\delta_\nu}$ for all
$\nu\in\Gamma(TM\oplus E^*)$.  By Theorem \ref{twist1}, we have
\begin{equation}\label{eq2}
\lb \Xi^\mu(\nu_1), \Xi^\mu(\nu_2)\rb=\Xi^\mu\lb \nu_1,\nu_2\rb
+\left(0, \ip{\widehat{\delta_{\nu_2}}}\ip{\widehat{\delta_{\nu_1}}}\dr\Lambda_\mu\right).
\end{equation}
\eqref{eq1}, \eqref{eq1,5} and \eqref{eq2} yield together
$\Xi(0,\mu(X_1,X_2))=\left(0, \ip{\widehat{\delta_{\nu_2}}}\ip{\widehat{\delta_{\nu_1}}}\dr\Lambda_\mu\right)$,
and so 
\begin{equation*}
\begin{split}
  \left\lb \Xi(\nu_1), \Xi(\nu_2)\right\rb_{\dr\Lambda_\mu}
  &=\left\lb \Xi(\nu_1), \Xi(\nu_2)\right\rb+\left(0, \ip{\widehat{\delta_{\nu_2}}}\ip{\widehat{\delta_{\nu_1}}}\dr\Lambda_\mu\right)\\
  &=\Xi\lb \nu_1, \nu_2\rb+\Xi(0,\mu(X_1,X_2))=\Xi\lb
  \nu_1,\nu_2\rb_\mu.\qedhere
\end{split}
\end{equation*}
\end{proof}

\begin{example}
  Consider  $E=TM$ and choose the Courant-Dorfman bracket
  on $TM\oplus T^*M$. Recall from \S\ref{lift_CD} the corresponding
  natural lift.  Then if $\nu_1=(X_1,\theta_1)$, we get $\delta_\nu
  X_2=[X_1,X_2]$ and
  $\mathcal D_{\nu_1}(\ip{X_2}\mu)=\ldr{X_1}\ip{X_2}\mu$. As a
  consequence,
\begin{equation}
\begin{split}
\mathcal D_{\nu_1}(\ip{X_2}\mu)-\mathcal D_{\nu_2}(\ip{X_1}\mu)-\ip{[X_1,X_2]}\mu&=
\ip{X_2}\ldr{X_1}\mu-\ldr{X_2}\ip{X_1}\mu\\
&=\ip{X_2}\ip{X_1}\dr\mu-\dr(\ip{X_2}\ip{X_1}\mu)
\end{split}
\end{equation}
and 
$\mathcal D_{\nu_1}(\ip{X_2}\mu)-\mathcal D_{\nu_2}(\ip{X_1}\mu)-\ip{[X_1,X_2]}\mu=-\dr(\ip{X_2}\ip{X_1}\mu)$
if and only if $\mu$ is closed.
We get then using \eqref{inner1}
\begin{equation*}
\begin{split}
 & \left\lb\Xi(X_1,\theta_1),\Xi(X_2,\theta_2)\right\rb_{\dr\Lambda_\mu}\\
  &=\Xi\lb(X_1,\theta_1),(X_2,\theta_2)\rb+\left(0,\ip{\widehat{[X_2,\cdot]}}\ip{\widehat{[X_1,\cdot]}}\dr\Lambda_\mu\right)\\
&=\Xi\lb(X_1,\theta_1),(X_2,\theta_2)\rb+\left(0,\dr\ell_{\ip{X_2}\ip{X_1}\mu}
-\widetilde{\dr(\ip{X_2}\ip{X_1}\mu)}\right)=\Xi\lb(X_1,\theta_1),(X_2,\theta_2)\rb_\mu.
\end{split}
\end{equation*}
\end{example}

\section{Symmetries of Dorfman brackets} \label{sec:symm} In this
section we use the known symmetries of the standard Courant algebroid
over $E$ to study a similar class of symmetries for Dorfman brackets
on $TM \oplus E^*$.

Consider $B\in \Omega^2_{\rm cl}(E)$.
We denote by $\Phi_B\colon
TE\oplus T^*E\to TE\oplus T^*E$ the vector bundle morphism over the identity on $E$ that is defined by
\[\Phi_B(X,\theta)=(X,\theta+\ip{X}B)
\]
for all $X\in\mx(E)$ and $\theta\in\Omega^1(E)$.
Then $\Phi_B$ is a symmetry of the Courant-Dorfman bracket on $TE
\oplus T^*E$ \cite{BuCaGu07}:
\[\lb \Phi_B(\chi_1), \Phi_B(\chi_2)\rb
=\Phi_B\lb\chi_1, \chi_2\rb
\]
for all $\chi_1,\chi_2\in\Gamma(TE\oplus T^*E)$.

According to \cite{BuCa12} (see Section \ref{sec:twists}), given a
form $\beta\in\Omega^1(M,E^*)$, the closed form $B=-\dr\Lambda_{\beta}$ is
linear.  In particular, if $\lb\cdot\,,\cdot\rb$ is a Dorfman bracket
on $TM \oplus E^*$ and $\Xi\colon \Gamma(TM\oplus E^*) \rightarrow
\Gamma_E^{l}(TE\oplus T^*E)$ the associated lift,
$\Phi_B(\Xi(\nu))=\Xi(\nu)+\ip{\widehat{\delta_\nu}}B$ is a linear section
of $TE\oplus T^*E$ over $\Phi_{\beta}(\nu)=\nu+(0,\ip{X}\beta)$
(see Lemma \ref{inner_product_linear}), where
 $\Phi_{\beta}\colon
TM\oplus E^*\to TM\oplus E^*$ is the vector bundle morphism over the identity on $M$:
\[\Phi_{\beta}(X,\epsilon)=(X,\epsilon+\ip{X}\beta).
\]
In this section we aim to understand when this map defines a symmetry of a
Dorfman bracket on $TM\oplus E^*$.
We prove the following result.
\begin{theorem} \label{thm:symm} A form $\beta\in\Omega^1(M,E^*)$
  defines a symmetry of a Dorfman bracket $\lb\cdot,\cdot\rb$ via
  $(X,\epsilon) \mapsto (X,\epsilon+\ip{X}\beta)$ if and only if
\[\Phi_{-\dr\Lambda_{\beta}}\circ\Xi=\Xi\circ\Phi_{\beta}
\]
for the corresponding lift
$\Xi\colon\Gamma(TM \oplus E^*) \rightarrow \Gamma^l_E(TE\oplus T^*E)$.
\end{theorem}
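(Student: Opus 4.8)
The plan is to read the desired identity as the statement that two lifts of the \emph{same} base map coincide, and to reduce the equality of lifts to the equality of their dual brackets via Proposition \ref{rough_lift}. Write $B=-\dr\Lambda_\beta$, a linear closed $2$-form, and note $\Phi_\beta^{-1}=\Phi_{-\beta}$. First observe that $\Phi_\beta\colon TM\oplus E^*\to TM\oplus E^*$ is a vector bundle automorphism over $\id_M$ preserving the anchor, since $\pr_{TM}\Phi_\beta(X,\epsilon)=X$. Consequently the transported bracket
\[\lb\nu_1,\nu_2\rb_\beta:=\Phi_\beta\lb\Phi_{-\beta}\nu_1,\Phi_{-\beta}\nu_2\rb\]
is again $\R$-bilinear, anchored by $\pr_{TM}$, and satisfies the Leibniz rule in its second slot. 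Substituting $\nu_i\mapsto\Phi_\beta\nu_i$ shows that $\beta$ \emph{defines a symmetry of $\lb\cdot\,,\cdot\rb$ if and only if $\lb\cdot\,,\cdot\rb_\beta=\lb\cdot\,,\cdot\rb$}.

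Next I would introduce the composite $\Xi^\beta:=\Phi_B\circ\Xi\circ\Phi_{-\beta}$. Since $B$ is linear, $\Phi_B$ maps linear sections to linear sections, and by the discussion preceding the theorem (which uses Lemma \ref{inner_product_linear}) $\Phi_B(\Xi(\mu))$ is linear over $\Phi_\beta(\mu)$; hence $\Xi^\beta(\nu)=\Phi_B\Xi(\Phi_{-\beta}\nu)$ is a linear section over $\Phi_\beta\Phi_{-\beta}\nu=\nu$. As $\Phi_B$, $\Xi$ and $\Phi_{-\beta}$ are $\R$-linear, $\Xi^\beta$ is an $\R$-linear lift in the sense of Proposition \ref{rough_lift}. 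The key computation uses that $\Phi_B$ is a symmetry of the Courant--Dorfman bracket \cite{BuCaGu07} and that $\Xi$ is natural (Theorem \ref{super}, applicable because $\lb\cdot\,,\cdot\rb$ is assumed Dorfman):
\begin{align*}
\lb\Xi^\beta(\nu_1),\Xi^\beta(\nu_2)\rb
&=\Phi_B\lb\Xi(\Phi_{-\beta}\nu_1),\Xi(\Phi_{-\beta}\nu_2)\rb\\
&=\Phi_B\Xi\lb\Phi_{-\beta}\nu_1,\Phi_{-\beta}\nu_2\rb=\Xi^\beta\lb\nu_1,\nu_2\rb_\beta,
\end{align*}
the last step inserting $\Phi_{-\beta}\Phi_\beta=\id$ and recalling the definition of $\lb\cdot\,,\cdot\rb_\beta$.

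Now I would invoke the fact, contained in Theorem \ref{prop_2_chi}, that the base section of the Courant--Dorfman bracket of two linear sections reproduces the bracket dual to the corresponding lift: the $TM\oplus E^*$-part of $\lb\Xi'(\nu_1),\Xi'(\nu_2)\rb$ equals $B'(\nu_1,\nu_2)$, where $B'$ is dual to $\Xi'$. Applying this with $\Xi'=\Xi^\beta$ and comparing base sections in the displayed identity forces the bracket dual to $\Xi^\beta$ to be exactly $\lb\cdot\,,\cdot\rb_\beta$. Since the assignment (lift)$\,\mapsto\,$(dual bracket) is a bijection (Proposition \ref{rough_lift}), we obtain
\[\Phi_B\circ\Xi=\Xi\circ\Phi_\beta\iff\Xi^\beta=\Xi\iff\lb\cdot\,,\cdot\rb_\beta=\lb\cdot\,,\cdot\rb\iff\beta\text{ is a symmetry},\]
where the first equivalence follows by composing $\Xi^\beta=\Phi_B\Xi\Phi_{-\beta}$ with $\Phi_\beta$ on the right, and the last is the observation of the first paragraph. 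This settles both directions at once.

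I expect the main obstacle to be the middle step: identifying $\lb\cdot\,,\cdot\rb_\beta$ as the bracket dual to $\Xi^\beta$. One must be careful that the identity $\lb\Xi^\beta\nu_1,\Xi^\beta\nu_2\rb=\Xi^\beta\lb\nu_1,\nu_2\rb_\beta$ does not by itself assert that $\Xi^\beta$ is \emph{natural} in the sense of Theorem \ref{super}, for that would already presuppose that $\lb\cdot\,,\cdot\rb_\beta$ is the dual bracket. What rescues the argument is that only the \emph{base} component of this identity is needed, and by Theorem \ref{prop_2_chi} the base of $\lb\Xi^\beta\nu_1,\Xi^\beta\nu_2\rb$ recovers the dual bracket of $\Xi^\beta$ irrespective of any Dorfman condition; comparing bases then pins it down to $\lb\cdot\,,\cdot\rb_\beta$. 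The remaining points---that $\lb\cdot\,,\cdot\rb_\beta$ is a bona fide anchored bracket with Leibniz rule in the second slot, and that $\Phi_B$ preserves linearity of sections---are routine.
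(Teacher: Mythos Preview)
Your argument is correct and takes a genuinely different route from the paper's. The paper proceeds by writing the difference $\widetilde{\phi_{(X,\epsilon)}}=\Xi(0,\ip{X}\beta)-(0,\ip{\widehat{\delta_{(X,\epsilon)}}}B)$ explicitly, then expands $\Xi\lb\Phi_\beta\nu_1,\Phi_\beta\nu_2\rb$ and analyses which of the resulting terms project to zero in $TM\oplus E^*$; a dedicated technical lemma (Lemma \ref{lem:linvert}) is proved to show that the one potentially problematic term, $\dr\langle\Phi_B\Xi(\nu_2),\widetilde{\phi_{\nu_1}}\rangle$, is core-linear for all $\nu_2$ precisely when $\phi_{\nu_1}=0$, and this vanishing is then identified with the commutation $\Phi_B\circ\Xi=\Xi\circ\Phi_\beta$.

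Your approach bypasses this term-by-term analysis entirely: you transport both the bracket and the lift by the pair $(\Phi_\beta,\Phi_B)$, observe that the transported lift $\Xi^\beta$ is again natural relative to the transported bracket $\lb\cdot\,,\cdot\rb_\beta$, and then exploit the bijection of Proposition \ref{rough_lift} to convert equality of lifts into equality of brackets. This is cleaner and makes the structural reason for the equivalence transparent; it also avoids Lemma \ref{lem:linvert} altogether. The one point where the paper's machinery still does the work is your identification of $\lb\cdot\,,\cdot\rb_\beta$ as the bracket dual to $\Xi^\beta$: as you correctly note, this follows from Theorem \ref{prop_2_chi} by reading off the base section of $\lb\chi_1,\chi_2\rb$ as $D_{\chi_1}^*\nu_2$, which coincides with the dual bracket regardless of any Jacobi-type hypothesis. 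What the paper's more explicit computation buys is a concrete description of the obstruction $\phi_{(X,\epsilon)}$ itself, which is useful in the worked example that follows; your argument, by contrast, is shorter and more conceptual but does not display this obstruction.
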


The proof relies on the following lemma. We set $B:=-\dr\Lambda_{\beta}$ for
  $\beta\in\Omega^1(M,E^*)$.
\begin{lemma} \label{lem:linvert} Choose
  $\phi \in \Gamma(\operatorname{Hom}(E,E \oplus T^*M))$.  Then
  $\dr\langle \Phi_B(\Xi(\nu)),\widetilde{\phi} \rangle$ is a
  core linear section of $T^*E\to E$ for all
  $\nu\in \Gamma(TM\oplus E^*)$ if and only if $\phi=0$.
\end{lemma}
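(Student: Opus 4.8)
The plan is to recast the statement as a nondegeneracy computation. Since $\Phi_B(\Xi(\nu))=\Xi(\nu)+(0,\ip{\widehat{\delta_\nu}}B)$ is a linear section of $TE\oplus T^*E\to E$ over $\Phi_\beta(\nu)$, and $\widetilde\phi$ is core-linear and hence also linear, the pairing $\langle \Phi_B(\Xi(\nu)),\widetilde\phi\rangle$ is a linear function $\ell_{\sigma(\nu)}$ for some $\sigma(\nu)\in\Gamma(E^*)$ (this is the $E^*$-valued pairing on $\widehat E$ of Section \ref{sec:fatb}). Since $r_E(\dr\ell_\sigma)=\sigma$ while every core-linear one-form lies in $\ker r_E$, the one-form $\dr\ell_{\sigma(\nu)}$ is a core-linear section of $T^*E\to E$ exactly when $\sigma(\nu)=0$. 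Thus the lemma reduces to showing that $\sigma(\nu)=0$ for all $\nu$ forces $\phi=0$, and I would prove this by computing $\sigma(\nu)$ explicitly.

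First I would split $\phi=(\phi_1,\phi_2)$ with $\phi_1\in\Gamma(\Hom(E,E))$ and $\phi_2\in\Gamma(\Hom(E,T^*M))$, so that $\widetilde\phi=(\widetilde{\phi_1},\widetilde{\phi_2})$, where $\widetilde{\phi_1}$ is the vertical core-linear vector field $e_m\mapsto\left.\frac{d}{dt}\right|_{t=0}(e_m+t\phi_1(e_m))$ and $\widetilde{\phi_2}=(Tq_E)^{*}\phi_2$ is a core-linear one-form. Writing $\nu=(X,\varepsilon)$ and using the symmetric pairing together with $\Xi(\nu)=\bigl(\widehat{\delta_\nu},\dr\ell_\varepsilon-\widetilde{\pr_{T^*M}\mathcal D_\nu\circ\iota_E}\bigr)$ from \eqref{general_lift}, only two contributions survive: $\dr\ell_\varepsilon$ evaluated on $\widetilde{\phi_1}$ yields $\ell_{\phi_1^{*}\varepsilon}$, and $\widetilde{\phi_2}$ evaluated on $\widehat{\delta_\nu}$ yields $\ell_{\phi_2^{*}X}$ because $\widehat{\delta_\nu}$ covers $X$. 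The terms pairing the two core-linear one-forms against the vertical $\widetilde{\phi_1}$ vanish, since a pulled-back one-form $(Tq_E)^{*}(\cdot)$ annihilates vertical vectors. Hence $\langle\Xi(\nu),\widetilde\phi\rangle=\ell_{\phi_1^{*}\varepsilon+\phi_2^{*}X}$.

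For the remaining term I would use $B=-\dr\Lambda_{\beta}$ and Lemma \ref{inner_product_linear} to write $\ip{\widehat{\delta_\nu}}B=\dr\ell_{\beta(X)}-\widetilde{\delta_\nu\beta}$. As this occupies only the $T^*E$-slot, its pairing with $\widetilde\phi$ sees only $\widetilde{\phi_1}$; the core-linear part $\widetilde{\delta_\nu\beta}$ again annihilates the vertical $\widetilde{\phi_1}$, while $\dr\ell_{\beta(X)}$ contributes $\ell_{\phi_1^{*}(\beta(X))}$. Altogether
\[\sigma(\nu)=\phi_1^{*}\bigl(\varepsilon+\beta(X)\bigr)+\phi_2^{*}X.\]
To conclude, I would vary $\nu$: taking $X=0$ gives $\phi_1^{*}\varepsilon=0$ for all $\varepsilon\in\Gamma(E^*)$, whence $\phi_1=0$; the condition then reduces to $\phi_2^{*}X=0$ for all $X\in\mx(M)$, so $\phi_2=0$ and $\phi=0$. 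The converse implication is immediate. I expect the only delicate point to be the pairing bookkeeping: keeping the two slots of $TE\oplus T^*E$ apart and exploiting that core-linear (pulled-back) one-forms kill core-linear (vertical) vector fields, which is precisely what annihilates the cross terms and isolates $\sigma(\nu)$.
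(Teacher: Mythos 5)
Your proposal is correct and follows essentially the same route as the paper: both reduce the statement to the vanishing of the $E^*$-component $\sigma(\nu)$ of the linear function $\langle \Phi_B(\Xi(\nu)),\widetilde{\phi}\rangle$ (the paper extracts it by pairing $\dr\langle\cdot,\cdot\rangle$ with core sections $e^\uparrow$, you by noting $r_E(\dr\ell_\sigma)=\sigma$), both arrive at the same formula $\sigma(\nu)=\phi_1^*(\varepsilon+\beta(X))+\phi_2^*X=\phi^*(X,\varepsilon+\beta(X))$, and both conclude by first setting $X=0$ to kill $\phi_1$ and then varying $X$ to kill $\phi_2$. Your write-up merely spells out the pairing computation that the paper leaves implicit.
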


\begin{proof} 
Since $\langle\tilde\phi, \Phi_B(\Xi(\nu)) \rangle$ is linear, $\dr\langle \tilde\phi,
\Phi_B(\Xi(\nu)) \rangle$ is a core linear section if and only if
$\Phi_E\left(\dr \left\langle\Phi_B(\Xi(\nu)),\tilde\phi\right\rangle\right)=0$. We have
\begin{equation*} 
\begin{split}
  \left\langle\Phi_E\left( \dr
    \left\langle\Phi_B(\Xi(\nu)),\tilde\phi\right\rangle\right), e
  \right\rangle&= \left\langle
  \dr  \left\langle\Phi_B(\Xi(\nu)),\tilde\phi\right\rangle, e^{\uparrow}
  \right\rangle \\
&=
  e^{\uparrow}\left\langle\Phi_B(\Xi(\nu)),\tilde\phi\right\rangle.
\end{split}
\end{equation*} 
Write $\nu=(X, \epsilon) \in \Gamma(TM\oplus E^*)$.
Since $\Phi_B(\Xi(X,\epsilon))= \Xi(X,\epsilon) +(0, \dr\ell_{\beta(X)}-\widetilde{\delta_{(X,\epsilon)}\beta})
$, we find 
\[\left\langle\Phi_B(\Xi(\nu)),\tilde\phi\right\rangle=\ell_{\phi^*(X,\epsilon+\beta(X))}
\]
and so
$e^{\uparrow}\left\langle\Phi_B(\Xi(\nu)),\tilde\phi\right\rangle=q_E^*\langle \phi^*(X,\epsilon+\beta(X)), e\rangle$. This vanishes for all $e\in\Gamma(E)$ and all
$(X,\epsilon)\in\Gamma(TM\oplus E^*)$ if and only if 
$\phi^*(X,\epsilon+\beta(X))=0$ for all $(X,\epsilon)\in\Gamma(TM\oplus E^*)$. In particular,
$\phi^*(0,\epsilon)$ must be $0$ for all $\epsilon\in\Gamma(E^*)$ or,
in other words, $\phi$ must have image in $T^*M$.  Using this, we find
$\phi^*(X,0)=\phi^*(X,\beta(X))$ for $X\in\mx(M)$. Since this must vanish for all $X\in\mx(M)$,
we have shown that $\phi$ must be $0$.
\end{proof}

\begin{proof}[Proof of Theorem \ref{thm:symm}]
 We define
  $\phi_{(X,\epsilon)}\in\Gamma(\operatorname{Hom}(E,E\oplus T^*M))$
  by
\begin{equation}\label{def_of_difference}
\widetilde{\phi_{(X,\epsilon)}}=\Xi(0,\ip{X}\beta)
-(0,\ip{\widehat{\delta_{(X,\epsilon)}}}B) =\Xi(0,\ip{X}\beta)
-\left(0,\dr\ell_{\beta(X)}-\widetilde{\delta_{(X,\epsilon)}\beta}\right) .
\end{equation}
We have used Lemma \ref{inner_product_linear}.  Note that this
difference is a core-linear section of $TE \oplus T^*E$ because the
linear sections $\Xi(0,\ip{X} \beta)$ and
$\left(0,\dr\ell_{\beta(X)}-\widetilde{\delta_{(X,\epsilon)}\beta}\right)$
both project to $(0,\ip{X}\beta)$ in $\Gamma(TM \oplus E^*)$.

Consider 
\[\lb\Phi_{\beta}(X_1,\epsilon_1), \Phi_{\beta}(X_2, \epsilon_2)\rb
=\lb (X_1, \epsilon_1+\ip{X_1}\beta), (X_2, \epsilon_2+\ip{X_2}\beta)\rb
\] in $\Gamma(TM \oplus E^*)$. This lifts to 
$\Xi\lb (X_1, \epsilon_1+\ip{X_1}\beta), (X_2, \epsilon_2+\ip{X_2}\beta)\rb,
$
which equals
\begin{align*}
  \left\lb\Xi(X_1, \epsilon_1)+\Xi(0,\ip{X_1}\beta), \Xi(X_2, \epsilon_2)+\Xi(0,\ip{X_2}\beta)\right\rb
\end{align*}
But this is 
\begin{align*}
  \left\lb\Phi_B(\Xi(X_1,
    \epsilon_1))+\widetilde{\phi_{(X_1,\epsilon_1)}}, \Phi_B(\Xi(X_2,
    \epsilon_2)))+\widetilde{\phi_{(X_2,\epsilon_2)}}\right\rb,
\end{align*}
which can be expanded to
\begin{align}
 & \Phi_B(\Xi\lb (X_1, \epsilon_1), (X_2, \epsilon_2)\rb) 
+\left\lb \Phi_B(\Xi(X_1, \epsilon_1)),\widetilde{\phi_{(X_2,\epsilon_2)}}\right\rb \nonumber\\
&+\left\lb
      \widetilde{\phi_{(X_1,\epsilon_1)}}, \Phi_B(\Xi(X_2,
      \epsilon_2)))\right\rb +\left\lb
      \widetilde{\phi_{(X_1,\epsilon_1)}},\widetilde{\phi_{(X_2,\epsilon_2)}}\right\rb\label{expand}
\end{align}
The second and fourth terms are again core-linear (see Lemma
\ref{new_lemma} and Lemma 4.5 in \cite{Jotz19b}, respectively) so
project to $0$, but the third is
\[ - \left\lb \Phi_B(\Xi(X_2,
  \epsilon_2)),\widetilde{\phi_{(X_1,\epsilon_1)}}\right\rb 
+\left(0, \dr\langle \Phi_B(\Xi(X_2,
  \epsilon_2)),\widetilde{\phi_{(X_1,\epsilon_1)}}
\rangle\right).
\]
The left-hand term is core-linear, so projects to $0$. 
By Lemma \ref{lem:linvert},
the right-hand term also has values in the core for arbitrary
$(X_2,\epsilon_2) \in \Gamma(TM\oplus E^*)$ if and only if 
$\phi_{(X_1,\epsilon_1)}=0$.
This happens exactly when \eqref{expand} projects to
$\lb(X_1,\epsilon_1),(X_2,\epsilon_2)\rb + (0,\ip{[X_1,X_2]}\beta)$ on $TM\oplus T^*M$, so when
\[ \lb(X_1,\epsilon_1+\ip{X_1} \beta), (X_2,\epsilon_2+\ip{X_2}
\beta)\rb=\lb(X_1,\epsilon_1),(X_2,\epsilon_2)\rb +(0,
\ip{[X_1,X_2]}\beta).\]

Now 
$\widetilde\phi_{(X,\epsilon)}=0$ is equivalent to $(\Xi\circ\Phi_{\beta})(X,\epsilon)=(\Phi_B\circ\Xi)
  (X,\epsilon)$ because 
\begin{equation*}
\begin{split}
  (\Xi\circ\Phi_{\beta})(X,\epsilon)&=\Xi(X,\epsilon+\beta(X))=\Xi(X,\epsilon)+\Xi(0,\ip{X}\beta)\\
  &=\Xi(X,\epsilon)+(0,\ip{\widehat{\delta_{(X,\epsilon)}}}B)+\widetilde\phi_{(X,\epsilon)}=(\Phi_B\circ\Xi)
  (X,\epsilon)+\widetilde\phi_{(X,\epsilon)}. \qedhere
\end{split}
\end{equation*}
\end{proof} 

Note that so far, we have not made any statement as to the existence
of forms like in Theorem \ref{thm:symm}. The theorem rather provides a
simple reformulation of the condition for being a symmetry.
\begin{example} 
  Consider $E=\wedge^k TM$ and the standard Dorfman bracket on
  $TM\oplus \wedge^k T^*M$ already studied earlier. Choose a morphism $\beta\colon TM
  \rightarrow \wedge^k T^*M$ and consider $-\dr \Lambda_\mu$ the associated
  linear $2$-form on $E=\wedge^k TM$.

For $\beta$ to define a symmetry of the Dorfman bracket on $TM\oplus \wedge^k T^*M$, we need 
\begin{align*} 
  \Xi(0,\ip{X}\beta)(e_m) &= -\dr \Lambda_\beta
  (\widehat{\delta_{(X,\alpha_k)}})(e_m) \end{align*} for all $e_m \in
E$, which is equivalent to $(0, \dr \ell_{\ip{X}\beta} - \widetilde{\dr
  \ip{X} \beta}) =- \dr \Lambda_\beta (\widehat{\ldr{X}})$.

Both sides of this equation are sections of $T^*E$, and they are equal
if and only if they map all linear and all core vector fields in the
same way.  On core vector fields $T^{\uparrow}$, for $T\in
\Gamma(\wedge^k TM)$, we have
\begin{align*} 
  \dr \Lambda_\beta (\widehat{\ldr{X} }, T^{\uparrow}) &= \widehat{\ldr{X} }(\Lambda_\beta (T^{\uparrow})) - T^{\uparrow}(\Lambda_\beta(\widehat{\ldr{X} })) - \Lambda_\beta([\widehat{\ldr{X} } , T^{\uparrow}]) \\
&= 0-q_E^*\langle T, \ip{X}\beta\rangle-0=0,
\end{align*}
$\dr \ell_{\ip{X}\beta}(T^{\uparrow})= q_E^*\langle T, \ip{X}\beta\rangle$ and 
$\widetilde{\dr \ip{X}\beta}(T^{\uparrow})(e_m) = 0$.
On a linear vector field $\widehat D\in \mx^l(E)$ over $Y\in\mx(M)$, we have  
\begin{align*} 
\dr \Lambda_\beta (\widehat{\ldr{X} }, \widehat{D}) &= \widehat{\ldr{X} }(\Lambda_\beta (\widehat{D})) - \widehat{D}(\Lambda_\beta(\widehat{\ldr{X} })) - \Lambda_\beta([\widehat{\ldr{X} } , \widehat{D}]) \\ 
&=\ell_{\ldr{X}(\ip{Y}\beta)-D^*(\ip{X}\beta)-\ip{[X,Y]}\beta},
\end{align*}
$\widetilde{\dr\ip{X}\beta}(\widehat{D})=\ell_{\ip{Y}\dr \ip{X}\beta}$ and
$\widehat{D }(\ell_{\ip{X}\beta})  = \ell_{D^*(\ip{X}\beta)} $.
Thus we are left with the following condition on $\beta$: 
\begin{align*} 
{\ldr{X}(\ip{Y}\beta)} - \ip{[X,Y]} \beta - D^*(\ip{X}\beta) =  \ip{Y} \dr \ip{X} \beta-  D^*(\ip{X}\beta)
\end{align*} for all $X,Y \in \mx(M) $, which is equivalent to
 $\beta\in\Omega^{k+1}(M)$ and $\dr\beta=0$.
\end{example}

\appendix

\section{On the proofs of Theorems \ref{prop_1_chi} and \ref{prop_2_chi}}\label{proofs}
Choose a linear section $\chi$ of $TE\oplus T^*E\to E$ over a pair
$(X,\varepsilon)\in\Gamma(TM\oplus E^*)$.  Then
$\chi=\left(\widehat{d_\chi},
  \dr\ell_\varepsilon-\widetilde{\phi_\chi}\right)$, following the
notations set after Theorem \ref{linear_are_der_thm}. For simplicity,
we write $\theta_\chi$ for
$\dr\ell_\varepsilon-\widetilde{\phi_\chi}\in\Omega^1(E)$.

\begin{lemma}\label{technical_one}
  Choose linear sections $\chi,\chi'$ of $TE\oplus T^*E\to E$ over
  $(X,\varepsilon),(X',\varepsilon')\in\Gamma(TM\oplus E^*)$, a
  section $e\in\Gamma(E)$ and a derivation $D$ of $E$ with symbol
  $Y$. Then
\begin{enumerate}
\item $\langle\theta_{\chi}, e^\uparrow\rangle=q_E^*\langle
  \varepsilon, e\rangle$,
\item $\langle\theta_{\chi}, \widehat D\rangle
=\ell_{D^*\varepsilon-\phi_\chi^*(Y)}$,
\item $\ldr{e^\uparrow}\theta_{\chi}=q_E^*\left(\dr\langle
    \varepsilon,e\rangle-\phi_\chi(e)\right)$,
\item $\ldr{\widehat{d_{\chi'}}}\theta_{\chi}=\dr\ell_{d_{\chi'}^*\varepsilon}
-\widetilde{(d_{\chi'}(\phi_\chi^*))^*}
$.
\end{enumerate}
\end{lemma}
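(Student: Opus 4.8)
The plan is to dispatch the four identities in order, since (1) and (2) are pure pairing computations that then serve as computational lemmas for the two Lie-derivative identities (3) and (4). The recurring tools are the formulas from \eqref{ableitungen}, namely $e^\uparrow(\ell_\varepsilon)=q_E^*\langle\varepsilon,e\rangle$, $e^\uparrow(q_E^*f)=0$, $\widehat{D}(\ell_\varepsilon)=\ell_{D^*\varepsilon}$ and $\widehat{D}(q_E^*f)=q_E^*(Xf)$, the bracket relations $[\widehat{D},e^\uparrow]=(De)^\uparrow$ and $[\widehat{D_1},\widehat{D_2}]=\widehat{[D_1,D_2]}$, and the fact that the core-linear section $\widetilde{\phi_\chi}(e_m)=(T_{e_m}q_E)^*\phi_\chi(e_m)$ annihilates the vertical subspace $\ker T_{e_m}q_E$. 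I will use repeatedly that $\Gamma(TE)$ is generated over $C^\infty(E)$ by the core fields $e^\uparrow$ and the linear fields $\widehat{D}$, so that two $1$-forms on $E$ coincide once they agree on these two classes.

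For (1) I split $\langle\theta_\chi,e^\uparrow\rangle=\langle\dr\ell_\varepsilon,e^\uparrow\rangle-\langle\widetilde{\phi_\chi},e^\uparrow\rangle$; the first term is $e^\uparrow(\ell_\varepsilon)=q_E^*\langle\varepsilon,e\rangle$ and the second vanishes because $e^\uparrow$ is vertical. For (2) the same splitting gives $\widehat{D}(\ell_\varepsilon)=\ell_{D^*\varepsilon}$ for the exact part, while $\langle\widetilde{\phi_\chi},\widehat{D}\rangle(e_m)=\langle\phi_\chi(e_m),T_{e_m}q_E\,\widehat{D}(e_m)\rangle=\langle\phi_\chi(e_m),Y(m)\rangle=\ell_{\phi_\chi^*(Y)}(e_m)$, using that $\widehat{D}$ covers $Y$ and the definition of the dual $\phi_\chi^*\colon TM\to E^*$; subtracting yields $\ell_{D^*\varepsilon-\phi_\chi^*(Y)}$.

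For (3) and (4) I again write $\theta_\chi=\dr\ell_\varepsilon-\widetilde{\phi_\chi}$ and dispose of the exact part using $\ldr{}\circ\dr=\dr\circ\ldr{}$: this gives $\ldr{e^\uparrow}\dr\ell_\varepsilon=\dr(q_E^*\langle\varepsilon,e\rangle)=q_E^*\dr\langle\varepsilon,e\rangle$ for (3) and $\ldr{\widehat{d_{\chi'}}}\dr\ell_\varepsilon=\dr\ell_{d_{\chi'}^*\varepsilon}$ for (4). It then remains to compute $\ldr{e^\uparrow}\widetilde{\phi_\chi}$ and $\ldr{\widehat{d_{\chi'}}}\widetilde{\phi_\chi}$, which I evaluate on generators via $(\ldr{V}\widetilde{\phi_\chi})(W)=V(\widetilde{\phi_\chi}(W))-\widetilde{\phi_\chi}([V,W])$, feeding in (1), (2) and the bracket relations. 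In (3), evaluating on $e'^\uparrow$ gives $0$ (both terms vanish since $[e^\uparrow,e'^\uparrow]=0$), and evaluating on $\widehat{D}$ over $Y$ gives $e^\uparrow(\ell_{\phi_\chi^*(Y)})=q_E^*\langle\phi_\chi(e),Y\rangle$ (the bracket term drops out since $[e^\uparrow,\widehat{D}]$ is vertical); these are exactly the values of $q_E^*(\phi_\chi(e))$, so $\ldr{e^\uparrow}\widetilde{\phi_\chi}=q_E^*(\phi_\chi(e))$ and (3) follows.

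The crux, and the only genuinely subtle step, is the linear-field evaluation for (4). Here $(\ldr{\widehat{d_{\chi'}}}\widetilde{\phi_\chi})(\widehat{D})=\widehat{d_{\chi'}}(\ell_{\phi_\chi^*(Y)})-\widetilde{\phi_\chi}([\widehat{d_{\chi'}},\widehat{D}])$; the first term is $\ell_{d_{\chi'}^*(\phi_\chi^*(Y))}$ by \eqref{ableitungen}, and the second, using $[\widehat{d_{\chi'}},\widehat{D}]=\widehat{[d_{\chi'},D]}$ (with symbol $[X',Y]$) together with (2), equals $\ell_{\phi_\chi^*([X',Y])}$. The point to recognise is that the difference $d_{\chi'}^*(\phi_\chi^*(Y))-\phi_\chi^*([X',Y])$ is precisely $(d_{\chi'}(\phi_\chi^*))(Y)$ for the induced derivation on $\Omega^1(M,E^*)=\Gamma(\Hom(TM,E^*))$ defined earlier by $(D\omega)(Y)=D^*(\omega(Y))-\omega[X,Y]$; applying (2) once more to the core-linear section $\widetilde{(d_{\chi'}(\phi_\chi^*))^*}$ shows its value on $\widehat{D}$ is also $\ell_{(d_{\chi'}(\phi_\chi^*))(Y)}$. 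Since both sides additionally vanish on $e'^\uparrow$, they agree on all generators, hence as $1$-forms, giving $\ldr{\widehat{d_{\chi'}}}\widetilde{\phi_\chi}=\widetilde{(d_{\chi'}(\phi_\chi^*))^*}$ and therefore (4). I expect the bookkeeping of the two dualisations ($\phi_\chi\mapsto\phi_\chi^*$ and the final adjoint) together with correctly identifying the induced derivation to be the place where care is most needed.
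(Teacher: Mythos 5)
Your proof is correct and follows essentially the same route as the paper's: all four identities are verified by pairing against the generating classes of core fields $e'^\uparrow$ and linear fields $\widehat D$, with the Cartan formula $\ldr{V}\theta(W)=V(\theta(W))-\theta([V,W])$ driving (3) and (4). The only (harmless) organizational difference is that you split off the exact part $\dr\ell_\varepsilon$ and use $\ldr{}\circ\dr=\dr\circ\ldr{}$ up front, which lets you reach (4) without explicitly invoking the identity $[d_{\chi'},D]^*=[d_{\chi'}^*,D^*]$ that the paper uses to regroup terms.
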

Note that in the last equation, $\phi_\chi^*$ is an element of
$\Omega^1(M,E^*)$. For a derivation $D$ of $E$ over $X\in\mx(M)$, the
derivation $D\colon\Omega^1(M,E^*)\to\Omega^1(M,E^*)$ over $X$ is
defined by
$(D\omega)(Y)=D^*(\omega(Y))-\omega[X,Y]$
for all $Y\in\mx(M)$.

\begin{proof}
 The first identity is
immediate. For the second, we recall \eqref{explicit_hat_D}. The
pairing of $\hat D$ with $\theta_\chi$ at $e_m$ is 
\[Y\langle \varepsilon, e\rangle- \langle \phi_\chi(e), Y\rangle-\langle\varepsilon, De\rangle
=\langle D^*\varepsilon, e\rangle- \langle \phi_\chi(e), Y\rangle
\] at $m$. 
Hence we have found (2). Next we prove (3). We have 
\begin{equation*}
\begin{split}
\langle \ldr{e^\uparrow}\theta_{\chi}, {e'}^\uparrow\rangle&=e^\uparrow\langle\theta_{\chi}, {e'}^\uparrow\rangle
-\langle \theta_{\chi}, \left[e^\uparrow, {e'}^\uparrow\right]\rangle=e^\uparrow(q_E^*\langle \varepsilon, e'\rangle)=0
\end{split}
\end{equation*}
for $e'\in\Gamma(E)$ and
\begin{equation*}
\begin{split}
  \langle \ldr{e^\uparrow}\theta_\chi,
  \widehat{D}\rangle&=e^\uparrow\langle\theta_\chi,
  \widehat{D}\rangle
  -\langle \theta_\chi, \left[e^\uparrow, \widehat{D}\right]\rangle=q_E^*\langle D^*\varepsilon-\phi_\chi^*(Y),e\rangle
+\langle \theta_\chi, (De)^\uparrow\rangle\\
&=q_E^*\langle  D^*\varepsilon-\phi_\chi^*(Y),e\rangle
+q_E^*\langle\varepsilon, De\rangle=q_E^*(Y\langle\varepsilon, e\rangle-\langle Y, \phi_\chi(e)\rangle)
\end{split}
\end{equation*} for a derivation $D$ of $E$ over $Y\in\mx(M)$.
Since $q_E^*\left(\dr\langle
    \varepsilon,e\rangle-\phi_\chi(e)\right)$ takes the same values
on ${e'}^\uparrow$ and $\hat D$, we are done.
 
Finally, we compute using the first identity
\begin{equation*}
\begin{split}
  \langle \ldr{\widehat{d_{\chi'}}}\theta_\chi,
  e^\uparrow\rangle&=\widehat{d_{\chi'}}\langle\theta_\chi,
  e^\uparrow\rangle
  -\langle \theta_\chi, \left[\widehat{d_{\chi'}}, e^\uparrow\right]\rangle=q_E^*(X'\langle \varepsilon, e\rangle-\langle\varepsilon,d_{\chi'}e\rangle)\\
  &=q_E^*\langle d_{\chi'}^*\varepsilon, e\rangle=\langle
  \dr\ell_{d_{\chi'}^*\varepsilon}, e^\uparrow\rangle=\langle
  \dr\ell_{d_{\chi'}^*\varepsilon}-\widetilde{(d_{\chi'}(\phi_\chi^*))^*},
  e^\uparrow\rangle
\end{split}
\end{equation*}
for $e\in\Gamma(E)$. Similarly, using (2) above
\begin{equation*}
\begin{split}
  \langle
  \ldr{\widehat{d_{\chi'}}}\theta_\chi,
  \widehat{D}\rangle&=\widehat{d_{\chi'}}\langle\theta_\chi,
  \widehat{D}\rangle
  -\langle \theta_\chi, \left[\widehat{d_{\chi'}}, \widehat{D}\right]\rangle=\ell_{d_{\chi'}^*(D^*\varepsilon-\phi_\chi^*(Y))}-\langle \theta_\chi,
  \reallywidehat{[d_{\chi'},D]}\rangle\\
&=\ell_{d_{\chi'}^*(D^*\varepsilon-\phi_\chi^*(Y))
-[d_{\chi'},D]^*\varepsilon+\phi_\chi^*[X',Y]}
\end{split}
\end{equation*} for a derivation $D$ of $E$ over $Y\in\mx(M)$.
An easy calculation shows 
$[d_{\chi'},D]^*=[d_{\chi'}^*,D^*]$,
which leads to 
\begin{equation*}
\begin{split}
\langle
  \ldr{\widehat{d_{\chi'}}}\theta_\chi,
  \widehat{D}\rangle&=\ell_{D^*d_{\chi'}^*\varepsilon-(d_{\chi'}\phi_\chi^*)(Y)}=\langle
  \dr\ell_{d_{\chi'}^*\varepsilon}-\widetilde{(d_{\chi'}\phi_\chi^*)^*},
  \hat D\rangle.\qedhere
\end{split}
\end{equation*}
\end{proof}

\begin{proof}[Proof of Theorem \ref{prop_1_chi}]
  We write
  $\tau=(e,\theta)\in\Gamma(E\oplus T^*M)$.  First we find that
  $\langle\ldr{\widehat{d_\chi}}q_E^*\theta, {e'}^\uparrow\rangle$
equals $\widehat{d_\chi}\langle q_E^*\theta,
  {e'}^\uparrow\rangle-\langle
  q_E^*\theta,[\widehat{d_\chi},{e'}^\uparrow]\rangle =0-0=0$ and
  $\langle\ldr{\widehat{d_\chi}}q_E^*\theta, \widehat D\rangle
  =\widehat{d_\chi}(q_E^*\langle \theta, Y\rangle)-\langle
  q_E^*\theta,[\widehat{d_\chi},\widehat D]\rangle
  =q_E^*(X\langle\theta,
  Y\rangle-\langle\theta,[X,Y]\rangle)=q_E^*\langle\ldr{X}\theta,
  Y\rangle $ for all $e'\in\Gamma(E)$ and any derivation $D$ of $E$
  over $Y\in\mx(M)$. This shows
  $\ldr{\widehat{d_\chi}}q_E^*\theta=q_E^*(\ldr{X}\theta)$.  In
  the same manner, we have
  $\ip{e^\uparrow}\dr\theta_\chi=\ldr{e^\uparrow}\theta_\chi-\dr\langle\theta_\chi,
  e^\uparrow\rangle =q_E^*(-\phi_\chi(e))$
  by (1) and (3) in Lemma \ref{technical_one}.  We get
\begin{equation*}
\begin{split}
  \left\lb \chi, \tau^\uparrow\right\rb
  &=\left(\left[\widehat{d_\chi}, e^\uparrow\right],
    \ldr{\widehat{d_\chi}}q_E^*\theta-\ip{e^\uparrow}\dr\theta_\chi
  \right)=\left( (d_\chi e)^\uparrow,
    q_E^*(\ldr{X}\theta+\pr_{T^*M}D_\chi(e,0))
  \right)\\
  &=\left( (d_\chi e)^\uparrow,
    q_E^*(\pr_{T^*M}{D}_{\chi}(e,\theta)) \right)
=D_\chi\tau^\uparrow,
\end{split}
\end{equation*}
which proves Theorem \ref{prop_1_chi}.
\end{proof}

\begin{proof}[Proof of Theorem \ref{prop_2_chi}]
  We simply compute \begin{equation}\label{bracket}
\begin{split}
&  \lb\chi_1,\chi_2\rb = \left(\left[\widehat{d_{\chi_1}},
      \widehat{d_{\chi_2}}\right],\ldr{\widehat{d_{\chi_1}}}\theta_{\chi_2}-\ip{\widehat{d_{\chi_2}}}\dr\theta_{\chi_1}
      \right).
\end{split}
\end{equation}
The $TE$-part is $\widehat{[d_{\chi_1},d_{\chi_2}]}$. By definition of $D_\chi$, we have 
$\pr_E\circ D_\chi\circ\iota_E\circ\pr_E=\pr_E\circ D_\chi$ and so
$[d_{\chi_1},d_{\chi_2}]=\pr_E\circ [D_{\chi_1},D_{\chi_2}]\circ\iota_E$.

The $T^*E$-component of \eqref{bracket} is 
\begin{equation*}
\dr\ell_{d_{\chi_1}^*\varepsilon_2}
-\reallywidetilde{(d_{\chi_1}(\phi_{\chi_2}^*))^*}
\cancel{-\dr\ell_{d_{\chi_2}^*\varepsilon_1}}
+\reallywidetilde{(d_{\chi_2}(\phi_{\chi_1}^*))^*}
+\dr\ell_{\cancel{d_{\chi_2}^*\varepsilon_1}-\phi_{\chi_1}^*(X_2)}
\end{equation*}
by Lemma \ref{technical_one}. 
First we find that $\langle d_{\chi_1}^*\varepsilon_2-\phi_{\chi_1}^*(X_2), e\rangle$ equals
\begin{equation*}
\begin{split}
&X_1\langle\varepsilon_2,e\rangle-\langle\varepsilon_2,d_{\chi_1}e\rangle
-\langle X_2, \phi_{\chi_1}(e)\rangle\\
&=X_1\langle\varepsilon_2,e\rangle-\langle (X_2,\varepsilon_2), D_{\chi_1}(e,0)\rangle=
\langle D_{\chi_1}^*(X_2,\varepsilon_2), (e,0)\rangle
\end{split}
\end{equation*}
for any $e\in\Gamma(E)$. Then we find that $\langle (d_{\chi_1}\phi_{\chi_2}^*-d_{\chi_2}\phi_{\chi_1}^*)^*(e),X\rangle$ equals
\begin{equation*}
\begin{split}
&\left(d_{\chi_1}^*(\phi_{\chi_2}^*(X))-\phi_{\chi_2}^*[X_1,X]-d_{\chi_2}^*(\phi_{\chi_1}^*(X))+\phi_{\chi_1}^*[X_2,X]
\right)(e)\\
=&X_1\langle X, \phi_{\chi_2}(e)\rangle -\langle X, \phi_{\chi_2}(d_{\chi_1}(e))\rangle -\langle [X_1,X], \phi_{\chi_2}(e)\rangle\\
&-X_2\langle X, \phi_{\chi_1}(e)\rangle +\langle X, \phi_{\chi_1}(d_{\chi_2}(e))\rangle +\langle [X_2,X], \phi_{\chi_1}(e)\rangle\\
=&\langle X, \ldr{X_1}(\pr_{T^*M}D_{\chi_2}(e,0))+\pr_{T^*M}\circ D_{\chi_2}\circ\iota_E\circ\pr_E\circ D_{\chi_1}(e,0)\rangle\\
&-\langle X, \ldr{X_2}(\pr_{T^*M}D_{\chi_1}(e,0))+\pr_{T^*M}\circ D_{\chi_1}\circ\iota_E\circ\pr_E\circ D_{\chi_2}(e,0)\rangle
\end{split}
\end{equation*}
for $X\in\mx(M)$ and $e\in\Gamma(E)$. Since $
\ldr{X_1}(\pr_{T^*M}D_{\chi_2}(e,0))$ equals\linebreak $\pr_{T^*M}
D_{\chi_1}(0,\pr_{T^*M}D_{\chi_2}(e,0))$ and $\pr_{T^*M}\circ
D_{\chi_1}\circ\iota_E\circ\pr_E\circ D_{\chi_2}(e,0)$ equals\linebreak
$\pr_{T^*M}D_{\chi_1}(\pr_E D_{\chi_2}(e,0),0)$, we find that the
first and fourth term add up to $\langle X,
\pr_{T^*M}D_{\chi_1}D_{\chi_2}(e,0)\rangle$. Similarly the second and third term add up to \linebreak
$-\langle X,
\pr_{T^*M}D_{\chi_2}D_{\chi_1}(e,0)\rangle$ and we get
\[\langle (d_{\chi_1}\phi_{\chi_2}^*-d_{\chi_2}\phi_{\chi_1}^*)^*(e),X\rangle
=\langle \pr_{T^*M}[D_{\chi_1},D_{\chi_2}](e,0), X\rangle.
\]
The proof of the second identity is left to the reader.
\end{proof}

\section{On the proof of Theorem \ref{main2}}\label{proofs2}

Recall that $\mathcal D$ has the following property:
\begin{equation}\label{D_boring}
\mathcal D_{(X,\varepsilon)}(e,\theta)=\mathcal D_{(X,\varepsilon)}(e,0)+(0,\ldr{X}\theta)
\end{equation}
for all $(X,\varepsilon)\in\Gamma(TM\oplus E^*)$ and
$(e,\theta)\in\Gamma(E\oplus T^*M)$.
\eqref{D_boring} and the definition of $\delta$ yield together
\begin{equation}\label{d_D_intertwined}
\delta\circ \pr_E=\pr_E\circ\mathcal D.
\end{equation}
We will use the following lemma.
\begin{lemma}\label{jacobi_eq}
  $\lb\cdot\,,\cdot\rb$ satisfies the Jacobi identity in Leibniz form
  if and only if
\begin{enumerate}
\item $\left[\delta_{\nu_1}, \delta_{\nu_2}\right]=\delta_{\lb\nu_1,\nu_2\rb}$ and 
\item $\pr_{T^*M}[\mathcal D_{\nu_1}, \mathcal D_{\nu_2}]\circ\iota_E=\pr_{T^*M}\circ\mathcal D_{\lb\nu_1,\nu_2\rb}\circ\iota_E$
\end{enumerate}
for all $\nu_1,\nu_2\in\Gamma(TM\oplus E^*)$.
\end{lemma}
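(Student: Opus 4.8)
The plan is to reduce the statement to the derivation identity \eqref{eq_jac_dual}, which already characterises the Jacobi identity in Leibniz form: the bracket $\lb\cdot\,,\cdot\rb$ is Jacobi if and only if $[\mathcal D_{\nu_1},\mathcal D_{\nu_2}]=\mathcal D_{\lb\nu_1,\nu_2\rb}$ in $\operatorname{Der}(E\oplus T^*M)$ for all $\nu_1,\nu_2\in\Gamma(TM\oplus E^*)$. Writing $A:=[\mathcal D_{\nu_1},\mathcal D_{\nu_2}]-\mathcal D_{\lb\nu_1,\nu_2\rb}$, which is again a derivation of $E\oplus T^*M$ (with symbol $[X_1,X_2]-\pr_{TM}\lb\nu_1,\nu_2\rb$, where $X_i=\pr_{TM}\nu_i$), the task becomes to show that $A=0$ for all $\nu_1,\nu_2$ is equivalent to conditions (1) and (2). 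First I would record the block form of $\mathcal D_\nu$ forced by the structural relations \eqref{D_boring} and \eqref{d_D_intertwined}: with $\delta_\nu=\pr_E\circ\mathcal D_\nu\circ\iota_E$ and $\phi_\nu:=\pr_{T^*M}\circ\mathcal D_\nu\circ\iota_E$, one has $\mathcal D_\nu(0,\theta)=(0,\ldr X\theta)$ and $\mathcal D_\nu(e,0)=(\delta_\nu e,\phi_\nu(e))$. Since $\Gamma(E\oplus T^*M)$ is generated as a $C^\infty(M)$-module by sections of the form $(e,0)$ and $(0,\theta)$, it suffices to test the identity $A=0$ on these two families.

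On a section $(0,\theta)$ the block form gives $[\mathcal D_{\nu_1},\mathcal D_{\nu_2}](0,\theta)=(0,\ldr{[X_1,X_2]}\theta)$ while $\mathcal D_{\lb\nu_1,\nu_2\rb}(0,\theta)=(0,\ldr{\pr_{TM}\lb\nu_1,\nu_2\rb}\theta)$, so $A(0,\theta)=(0,\ldr{[X_1,X_2]-\pr_{TM}\lb\nu_1,\nu_2\rb}\theta)$. This vanishes for all $\theta\in\Omega^1(M)$ precisely when the symbols agree, i.e. when $[X_1,X_2]=\pr_{TM}\lb\nu_1,\nu_2\rb$; equivalently, this is the vanishing of the symbol of $A$. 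On a section $(e,0)$ I would expand $[\mathcal D_{\nu_1},\mathcal D_{\nu_2}](e,0)$ using the block form, obtaining $E$-component $[\delta_{\nu_1},\delta_{\nu_2}]e$ and $T^*M$-component $\phi_{\nu_1}(\delta_{\nu_2}e)-\phi_{\nu_2}(\delta_{\nu_1}e)+\ldr{X_1}\phi_{\nu_2}(e)-\ldr{X_2}\phi_{\nu_1}(e)$. Comparing with $\mathcal D_{\lb\nu_1,\nu_2\rb}(e,0)=(\delta_{\lb\nu_1,\nu_2\rb}e,\phi_{\lb\nu_1,\nu_2\rb}(e))$, the $E$-component of $A(e,0)=0$ is exactly condition (1) and the $T^*M$-component is exactly condition (2).

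It remains to assemble these pieces, and this bookkeeping of symbols is the one point requiring care. Vanishing of $A$ is equivalent to $A(0,\theta)=0$ and $A(e,0)=0$ for all $\theta,e$, i.e. to the symbol condition $[X_1,X_2]=\pr_{TM}\lb\nu_1,\nu_2\rb$ together with (1) and (2). The observation that eliminates the redundant symbol condition is that (1), read as an equality of derivations of $E$, already forces the symbols to agree: applying $[\delta_{\nu_1},\delta_{\nu_2}]=\delta_{\lb\nu_1,\nu_2\rb}$ to a section $fe$ and cancelling the $C^\infty(M)$-linear terms yields $[X_1,X_2](f)e=(\pr_{TM}\lb\nu_1,\nu_2\rb)(f)e$ for all $f$ and $e$, hence $[X_1,X_2]=\pr_{TM}\lb\nu_1,\nu_2\rb$. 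Consequently the symbol condition is subsumed in (1), and $A=0$ for all $\nu_1,\nu_2$ is equivalent to (1) and (2); together with \eqref{eq_jac_dual} this proves the lemma. I expect this symbol cancellation, together with the justification that testing on the generating sections $(e,0)$ and $(0,\theta)$ suffices once the symbol of $A$ vanishes, to be the only genuinely delicate step; the block computations themselves are routine consequences of \eqref{D_boring} and \eqref{d_D_intertwined}.
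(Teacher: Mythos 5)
Your proof is correct and follows essentially the same route as the paper's: both reduce the Jacobi identity to $[\mathcal D_{\nu_1},\mathcal D_{\nu_2}]=\mathcal D_{\lb\nu_1,\nu_2\rb}$ via \eqref{eq_jac_dual} and then test this identity separately on sections $(e,0)$ and $(0,\theta)$, with the $(e,0)$ part splitting into conditions (1) and (2). The only cosmetic difference is that the paper disposes of the $(0,\theta)$ part by noting that the bracket is automatically anchored by $\pr_{TM}$ (so $[\mathcal D_{\nu_1},\mathcal D_{\nu_2}](0,\theta)=\mathcal D_{\lb\nu_1,\nu_2\rb}(0,\theta)$ always holds), whereas you recover the same symbol condition from condition (1); both are valid.
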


\begin{proof}
First note that by \eqref{d_D_intertwined}, we have
\begin{equation}\label{delta_der}
[\delta_{\nu_1},\delta_{\nu_2}]=\pr_E\circ [\mathcal D_{\nu_1},\mathcal D_{\nu_2}]\circ\iota_E.
\end{equation}
If $\lb\cdot\,,\cdot\rb$ satisfies the Jacobi identity in Leibniz form, then (1) and (2) are immediate by
\eqref{eq_jac_dual}.

Conversely, (1) and (2) give using \eqref{delta_der}: $ [\mathcal
D_{\nu_1},\mathcal D_{\nu_2}]\circ\iota_E=\mathcal
D_{\lb\nu_1,\nu_2\rb}\circ\iota_E$.  We have always $[\mathcal
D_{\nu_1},\mathcal
D_{\nu_2}](0,\theta)=(0,\ldr{X_1}\ldr{X_2}\theta-\ldr{X_2}\ldr{X_1}\theta)
=(0,\ldr{[X_1,X_2]}\theta)=\mathcal D_{\lb\nu_1,\nu_2\rb}(0,\theta)$
for all $\theta\in\Omega^1(M)$. This shows that (1), (2) are
equivalent to $[\mathcal D_{\nu_1},\mathcal D_{\nu_2}]=\mathcal
D_{\lb\nu_1,\nu_2\rb}$, which dualises to the Jacobi identity in
Leibniz form for $\lb\cdot\,,\cdot\rb$.
\end{proof}

Now we can prove Theorem  \ref{super}.

\begin{proof}[Proof of Theorem~\ref{super}]We write
$\tau=(e,\theta)$, $\tau_i=(e_i,\theta_i)$ and $\nu=(X,\varepsilon)$,
$\nu_i=(X_i,\varepsilon_i)$ for $i=1,2$.
  By \eqref{prop_2_chi_eq}, we have 
\begin{equation*}
\begin{split}
  & \lb\Xi(\nu_1),\Xi(\nu_2)\rb = \left(\reallywidehat{[\delta_{\nu_1},
      \delta_{\nu_2}]},\dr\ell_{\pr_{E^*}\mathcal
      D_{\nu_1}^*\nu_2}-\reallywidetilde{\pr_{T^*M}\circ[\mathcal
      D_{\nu_1},\mathcal D_{\nu_2}]\circ\iota_E} \right).
\end{split}
\end{equation*}
By Lemma \ref{jacobi_eq}, this is 
\begin{equation*}
\begin{split}
  & \lb\Xi(\nu_1),\Xi(\nu_2)\rb = \left(\reallywidehat{\delta_{\lb\nu_1,\nu_2\rb}},\dr\ell_{\pr_{E^*}\mathcal
      D_{\nu_1}^*\nu_2}-\reallywidetilde{\pr_{T^*M}\circ\mathcal D_{\lb\nu_1,\nu_2\rb}\circ\iota_E} \right)
\end{split}
\end{equation*}
if and only if $\lb\cdot\,,\cdot\rb$ satisfies the Jacobi identity in
Leibniz form. Since $\mathcal
      D_{\nu_1}^*\nu_2=\lb\nu_1,\nu_2\rb$, we are done.
\end{proof}

\section{A non-local Leibniz algebroid}\label{non-local} 
  Let $M=\mathbb S^1 \times \mathbb S^1 \simeq\mathbb T^2$ and consider the vector bundle $\bar{E}= T^*M \oplus
  \wedge^2 T^*M$ over $M$.  Let $\eta\in\Omega^1(\mathbb S^1)$ be the standard
  volume form on the circle and set $\eta_x=\pr_1^*\eta$ and
  $\eta_y=\pr_2^*\eta$, where $\pr_i\colon\mathbb S^1 \times \mathbb
  S^1\to\mathbb S^1$ are the projections, $i=1,2$. Then $\eta_x \wedge
  \eta_y$ is a volume form on $M$ and $\eta_x, \eta_y \in \Omega^1(M)$
  form a basis of one-forms such that the pullback of $\eta_x$ along
  any $\iota_q\colon \mathbb S^1\hookrightarrow\mathbb S^1 \times \{q\}$
  and the pullback of $\eta_y$ to any $\{p\} \times \mathbb S^1$ are
  the standard volume form on the circle.  Define the following
  operations for integration along the first fibre. For $f,g,h\in
  C^\infty(M)$: $\int_{\mathbb S^1} f \eta_x + g \eta_y \in
  C^\infty(M)$,
\begin{align*} 
  \left(\int_{\mathbb S^1} f \eta_x + g \eta_y\right)(p,q) := \int_{\mathbb S^1} \iota_q^*(f) \eta
\end{align*}
and $\int_{\mathbb S^1} h\, \eta_x \wedge \eta_y\in\Omega^1(M)$, 
\begin{align*}
 \left( \int_{\mathbb S^1} h\, \eta_x \wedge \eta_y\right)(p,q):= \left(\int_{\mathbb S^1} \iota_q^*(h)\eta\right) \eta_y (p,q). 
\end{align*}
Clearly, the resulting function
$\int_{\mathbb S^1} f \eta_x + g \eta_y \in C^\infty(M)$ is constant
along the first $\mathbb S^1$, i.e.~only a function of $q$ in the
notation above. In the same manner, the one-form
$\int_{\mathbb S^1} h\, \eta_x \wedge \eta_y$ is constant along the
first $\mathbb S^1$ and only has a $\eta_y$ component. That is, the
obtained functions and 1-forms are invariant along the fibers of
$\pr_2$.

Now we define a bracket on $\bar{E}=T^*M \oplus \wedge^2 T^*M$ as follows: 
\begin{equation*} 
  \lb (\alpha_1,\alpha_2),(\beta_1,\beta_2)\rb = \left(0,\left(\int_{\mathbb S^1} \alpha_1\right) \beta_2 + \left(\int_{\mathbb S^1} \alpha_2\right) \wedge \beta_1\right) 
\end{equation*} 
and we prove that $(\bar{E}=T^*M \oplus \wedge^2 T^*M, \lb \cdot,\cdot\rb,0\colon\bar{E}\rightarrow TM$ is a Leibniz algebroid. Since the bracket is clearly $C^{\infty}$-linear in the second component and thus satisfies the Leibniz rule for functions with the zero-anchor, it suffices to check the Jacobi identity in Leibniz form. For simplicity, we just write $\int$ for
$\int_{\mathbb S^1}$, and this is always the integration along the first $\mathbb S^1$. We have 
\begin{align*} 
  &\lb (\alpha_1,\alpha_2), \lb (\beta_1, \beta_2), (\gamma_1, \gamma_2) \rb \rb = \left\lb (\alpha_1,\alpha_2), \left(0,\int \beta_1 \,\gamma_2 + \int \beta_2 \wedge \gamma_1\right) \right\rb \\
  &= \left(0, \int \alpha_1 \int \beta_1 \,\gamma_2 + \int \alpha_1 \int \beta_2 \wedge \gamma_1 \right)
\end{align*}
and in a similar manner
\begin{align*}
  \lb \lb (\alpha_1,\alpha_2), ( \beta_1, \beta_2) \rb, (\gamma_1, \gamma_2) \rb = \left\lb \left(0,\int \alpha_1\, \beta_2 + \int \alpha_2 \wedge \beta_1 \right), (\gamma_1,\gamma_2) \right\rb \\
  = \left(0,\int\left(\int \alpha_1 \, \beta_2 + \int \alpha_2 \wedge \beta_1\right) \wedge \gamma_1 \right) \\
  = \left(0,\int \alpha_1 \int\beta_2 \wedge \gamma_1 - \int \beta_1 \int \alpha_2 \wedge \gamma_1\right).
\end{align*}
Therefore we get
\begin{align*}
\lb (\alpha_1,\alpha_2), \lb ( \beta_1, \beta_2), (\gamma_1, \gamma_2) \rb \rb - \lb (\beta_1,\beta_2), \lb ( \alpha_1, \alpha_2), (\gamma_1, \gamma_2) \rb \rb \\
  - \lb \lb (\alpha_1,\alpha_2), (\beta_1, \beta_2) \rb, (\gamma_1, \gamma_2) \rb \\
  = \left(0, \int \alpha_1 \int \beta_1 \gamma_2 + \int \alpha_1\int \beta_2 \wedge \gamma_1 -  \int \beta_1 \int \alpha_1 \gamma_2 - \int \beta_1 \int \alpha_2 \wedge \gamma_1 \right. \\
  - \left. \int\alpha_1 \int \beta_2 \wedge \gamma_1 + \int
    \beta_1 \int \alpha_2 \wedge \gamma_1 \right) = 0
\end{align*}
This Leibniz algebroid is \emph{non-local}, i.e.~its bracket \emph{not} given by a bilinear differential operator of any order.

\bigskip

\noindent\textbf{Acknowledgement:} The authors wish to thank an
anonymous referee for useful comments on an earlier version of this work.

\def\cprime{$'$} \def\polhk#1{\setbox0=\hbox{#1}{\ooalign{\hidewidth
  \lower1.5ex\hbox{`}\hidewidth\crcr\unhbox0}}} \def\cprime{$'$}
  \def\cprime{$'$}

\end{document}